\numberwithin{equation}{section}
\newtheorem{theorem}{Theorem}[section]
\newtheorem{lemma}[theorem]{Lemma}
\newtheorem{prop}{Proposition}[section]
\newcommand{\dif}{\mathrm{d}}
\newcommand{\SUM}[3]{\sum\limits_{{#1}={#2}}^{#3}}
\begin{document}
\title[Stability for relaxed CNS]{Asymptotic stability of composite waves of two viscous shocks for relaxed compressible Navier-Stokes equations}
\author{Renyong Guan and Yuxi Hu}
 \thanks{\noindent  Renyong Guan,   Department of Mathematics, China University of Mining and Technology, Beijing, 100083, P.R. China, renyguan@163.com\\
\indent  Yuxi Hu, Department of Mathematics, China University of Mining and Technology, Beijing, 100083, P.R. China, yxhu86@163.com\\
 }
\begin{abstract}
This paper investigates the time asymptotic stability of composite waves formed by two shock waves within the context of one-dimensional relaxed compressible Navier-Stokes equations. We establish the nonlinear stability  of the composite waves consisting of two viscous shocks  under the condition of having two small, independent wave strengths and the presence of small initial perturbations. Furthermore, the solutions of the relaxed system are observed to globally converge over time to those of the classical system as the relaxation parameter approaches zero. The methods are based on relative entropy, the $a$-contraction with shifts theory and fundamental energy estimates.
 \\[2em]
{\bf Keywords}: Relaxed compressible Navier-Stokes equations; asymptotic stability; relaxation limit; composite waves of two shock wave; relative entropy; energy estimates \\
{\bf AMS classification code}: 76N15, 35B35, 35Q35

\end{abstract}
\maketitle
\section{Introduction}
In this paper, we study the one-dimensional isentropic compressible Navier-Stokes equations, complemented by Maxwell's constitutive relations. The equations are formulated as follows:
\begin{align}\label{1.1}
\begin{cases}
\rho_t+(\rho u)_x=0,\\
(\rho u)_t+(\rho u^2)_x+p(\rho)_x=\Pi_x,\\
\tilde \tau  (\rho) (\Pi_t+ u \Pi_x)+\Pi=\mu u_x,
\end{cases}
\end{align}
where $(t, x)\in (0, +\infty)\times \mathbb R$. Here, $\rho$, $u$, $\Pi$ represent fluid density, velocity and stress, respectively. $\mu>0$ is the viscosity constant.
The pressure $p$ is assumed to satisfy the usual $\gamma$-law, $p(\rho)=A \rho^\gamma$ where $\gamma>1$ denotes the adiabatic index and $A$ is any positive constant.
Without loss of generality, we assume 
$A=1$ in the sequel.

The constitutive relation $\eqref{1.1}_3$, first introduced by Maxwell in \cite{MAX}, serves to delineate the connection between the stress tensor and velocity gradient for non-simple fluid. The relaxation parameter $\tilde\tau=\tilde \tau(\rho)$ describes the time lag in response of the stress tensor to velocity gradient. In fact, even for simple fluid, water for example, the {\it {time lag}} does exists but it is very small ranging from 1 ps to 1 ns, see \cite{GM, FS}. However, Pelton et al. \cite{MP} showed that such a {\it time lag} cannot be neglected, even for simple fluids, in the experiments of high-frequency (20 GHz) vibration of nano-scale mechanical devices immersed in water-glycerol mixtures. It was shown that, see also \cite{DJE}, equation $\eqref{1.1}_3$ provides a general formalism with which to characterize the fluid-structure interaction of nano-scale mechanical devies vibrating in simple fluids.

Assuming that $\tilde \tau(\rho)=\tau \rho$, where $\tau$ is a positive constant, equation $\eqref{1.1}_3$ reduces to
\begin{align}\label{1.2}
\tau  \rho (\Pi_t+ u \Pi_x)+\Pi=\mu u_x.
\end{align}
From the standpoint of conservation laws, Freist\"uhler initially proposed the constitutive equation \eqref{1.2} in \cite{FRE1, FRE2} in multi-dimensional case. Under this assumption, equation \eqref{1.2} exhibits conservation properties by utilizing mass equation $\eqref{1.1}_1$, making it straightforward to define weak solutions. For ease of analysis, we restate the system \eqref{1.1} with the assumption $\tilde \tau(\rho)=\tau \rho$ in Lagrangian coordinates as follows:
\begin{equation}\label{1.3}
\begin{cases}
v_t-u_x=0,\\
u_t+p_x=\Pi_x,\\
\tau \Pi_t+v\Pi=\mu u_x,
\end{cases}
\end{equation}
where $v=\frac{1}{\rho}$ denotes the specific volume per unit mass.

We are interested in the Cauchy problem to system \eqref{1.3} for the functions
\begin{align*}
(v, u, \Pi): [0, +\infty) \times \mathbb R \rightarrow (0, \infty)\times \mathbb R \times \mathbb R
\end{align*}
with initial conditions
\begin{align} \label{1.4}
(v, u, \Pi)(0,x)=(v_0, u_0, \Pi_0)(x)\rightarrow(v_{\pm},u_{\pm},0) \quad (x\rightarrow\pm\infty).
\end{align}

The large-time behavior of solutions to system \eqref{1.3}-\eqref{1.4} is closely
related to the Riemann problem of the associated $p$-system
\begin{equation}\label{1.5}
\begin{cases}
v_t-u_x=0,\\
u_t+p(v)_x=0,
\end{cases}
\end{equation}
with the Riemann initial data:
\begin{equation}\label{1.6}
(v,u)(t=0,x)=
\begin{cases}
(v_-,u_-),\quad x<0,\\
(v_+,u_+),\quad x>0.
\end{cases}
\end{equation}

We recall that there exists a unique intermediate state $(v_m, u_m)$ connected to $(v_- ,u_-)$ and $(v_+, u_+)$ by 1-shock wave and 2-shock wave, respectively. And it satisfies Rankine-Hugoniot condition
\begin{equation}\label{3.15-1}
\begin{aligned}
\begin{cases}
\sigma_1(v_m-v_-)=-(u_m-u_-),\\
\sigma_1(u_m-u_-)=p(v_m)-p(v_-),
\end{cases}
\begin{cases}
\sigma_2(v_+-v_m)=-(u_+-u_m),\\
\sigma_2(u_+-u_m)=p(v_+)-p(v_m),
\end{cases}
\end{aligned}
\end{equation}
and Lax entropy condition
\begin{equation}\label{3.15-2}
\begin{aligned}
-\sqrt{-p^\prime(v_-)}<\sigma_1<-\sqrt{-p^\prime(v_m)},\\
\sqrt{-p^\prime(v_m)}<\sigma_2<\sqrt{-p^\prime(v_+)}.
\end{aligned}
\end{equation}
Then, the Riemann problem of the associated $p$-system has a composite wave solution $(\bar{v}, \bar{u}):=(v_1^s, u_1^s)+(v_2^s, u_2^s)-(v_m, u_m)$, where 1-shock wave solution $(v_1^s, u_1^s)$ and 2-shock wave solution $(v_2^s, u_2^s)$ defined as follows (see \cite{SMO}):
\[
(v_1^s, u_1^s)(t, x)=
\begin{cases}
(v_- ,u_-),\quad x<\sigma_1t,\\
(v_m ,u_m),\quad x>\sigma_1t,
\end{cases}
(v_2^s, u_2^s)(t, x)=
\begin{cases}
(v_m ,u_m),\quad x<\sigma_2t,\\
(v_+ ,u_+),\quad x>\sigma_2t.
\end{cases}
\]

If $\tau=0$, the system \eqref{1.3} reduces to classical compressible isentropic Navier-Stokes equations:
\begin{equation}\label{1.7}
\begin{cases}
v_t-u_x=0,\\
u_t+p(v)_x=\left(\mu\frac{u_x}{v}\right)_x.
\end{cases}
\end{equation}

The asymptotic behavior of solutions for system \eqref{1.7} and its non-isentropic counterpart has been extensively explored with a variety of initial conditions, as documented in \cite{HH, SMJ, HLM, HM, WY, MNS, MNR1, MNR2}. Notably, for shock profile initial data, Matsumura and Nishihara \cite{MNS}, Goodman \cite{GD} pioneered the establishment of the stability of traveling waves with sufficiently small initial disturbances and zero mass condition, by utilizing the anti-derivative method. Furthermore, Huang and Matsumura \cite{HM} demonstrated the asymptotic stability of a composite wave consisting of two viscous shocks within the Navier-Stokes-Fourier system, given that the shocks are of small magnitude and of the same order.

In a recent advancement, Kang, Vasseur and Wang \cite{WY} overcame the challenge of reconciling the standard anti-derivative method typically employed for viscous shock stability with the energy method used for rarefaction stability. They affirmed the stability of composite waves comprising both viscous shock and rarefaction by employing the method of relative entropy and the $a$-contraction with shifts theory. These methods was initially introduced by  Bresch and Desjardins in \cite{BD} and further developed by Kang and Vasseur in \cite{KV1, KV3, KV6, KV10}, with additional insights provided in \cite{KV8, KV11, KV2, V28}. Employing similar methodologies, Han, Kang and Kim \cite{SMJ}  have recently established the uniform convergence toward a composite of two viscous shocks for system \eqref{1.7} with independently small amplitudes. The objective of the present paper is to extend the results in \cite{SMJ} to the context of the relaxed compressible Navier-Stokes equations \eqref{1.7}.


For relaxed compressible Navier-Stokes equations, the time-asymptotic stability of both single viscous shock waves and composites of two rarefaction waves has been a subject of study. When $\tilde \tau(\rho)$ is a constant, Hu-Wang \cite{ZWH} and Hu-Wang \cite{XFH} respectively established the linear stability of the viscous shock wave and nonlinear stability of rarefaction waves. With $\tilde \tau=\tau \rho$, by checking Majda's condition on the Lopatinski determinant and Zumbrun's Evans function condition \cite{MZ, MZ2, PZ},  Freist\"uhler \cite{FRE1} get the nonlinear stability of the viscous shock waves for system \eqref{1.1} with shock profile initial data. More recently, the authors \cite{HG} have shown the nonlinear stability of composite waves of viscous shock and rarefaction, where the method of relative entropy and the $a$-contraction with shifts theory were fully used.

In this paper, we studied the time asymptotic stability for composite waves of two viscous shocks to system \eqref{1.3}-\eqref{1.4}. Note that the dissipation structure of relaxed system \eqref{1.3} is much weaker than that of classical system \eqref{1.7}, thus the BD entropy used in \cite{WY, SMJ} to prove the $a$-contraction property is not available for our system and energy estimates have new challenges. We shall follow the methods in \cite{HG} to overcome such difficulties. Here are our strategy. Instead of using BD entropy, we use of the special  hyperbolic structure of the relaxed system and the relative entropy quantities with weight function and shifts to get the $L^\infty_tL^2_x$ estimates of $(v-\widetilde v, u-\widetilde u, \Pi-\widetilde \Pi)$ and weighted $L^2_tL^2_x$ estimates of $(v-\widetilde v, \Pi-\widetilde \Pi)$ (see Lemma \ref{le4.2}).  We note that, unlike that in \cite{WY,SMJ}  where the dissipation estimates of  the derivative of $(v-\widetilde v)$ were obtained for system \eqref{1.3}, we do not have such estimate. Consequently, we introduce this estimate and subsequently absorb it through a combination of high-order and dissipation estimates.

Our main theorem are stated as follows:

\begin{theorem}\label{th1}
Let the relaxation parameter $\tau$ satisfy
\begin{equation}\label{2.5}
\tau\leq \min\{\inf\limits_{z_1\in[v_m,v_-]}\frac{\mu}{2|\sigma_1^2+p^{\prime}(z_1)|},
\inf\limits_{z_2\in[v_m,v_+]}\frac{\mu}{2|\sigma_2^2+p^{\prime}(z_2)|},
1\}.
\end{equation}

For a given constant state $(v_+,u_+)\in \mathbb{R}_+\times \mathbb{R}$,
there exist constants $\delta_0,\varepsilon_0>0$ such that the following holds true.

For any constant states $(v_-,u_-)$ and $(v_m,u_m)$ satisfying \eqref{3.15-1} with
\[|v_+-v_m|+|v_m-v_-|\leq\delta_0.\]
Denote $(\widetilde{v}_i,\widetilde{u}_i,\widetilde{\Pi}_i)(x-\sigma_i t)$ the i-viscous shock solution that are the traveling wave solutions for \eqref{1.3}-\eqref{1.4} for each $i=1,2$.
Let $(v_0,u_0,\Pi_0)$ be any initial data such that
\begin{equation}\label{1.8}
\sum\limits_{\pm}\left(\|(v_0-v_{\pm},u_0-u_{\pm})\|_{L^2(\mathbb{R}_{\pm})}\right)
+\|((v_0)_x,(u_0)_x)\|_{H^1(\mathbb{R})}+\sqrt{\tau}\|\Pi_0\|_{H^2(\mathbb{R})}<\varepsilon_0,
\end{equation}
where $\mathbb{R}_+:=-\mathbb{R}_-=(0,+\infty)$.
Then, the initial value problem \eqref{1.3}-\eqref{1.4} has a unique global-in-time solution $(v,u,\Pi)\in C^1((0, +\infty)\times \mathbb R)$. Moreover, there exist an absolutely continuous shift $X_i(t)$ (defined in \eqref{2.10}) such that
\begin{equation}\label{1.9}
\begin{aligned}
&v(t,x)-\left(\widetilde{v}_1(x-\sigma_1 t-X_1(t))+\widetilde{v}_2(x-\sigma_2 t-X_2(t))-v_m\right)\in C(0,+\infty;H^2(\mathbb{R})),\\
&u(t,x)-\left(\widetilde{u}_1(x-\sigma_1 t-X_1(t))+\widetilde{u}_2(x-\sigma_2 t-X_2(t))-u_m\right)\in C(0,+\infty;H^2(\mathbb{R})),\\
&\Pi(t,x)-\left(\widetilde{\Pi}_1(x-\sigma_1 t-X_1(t))+\widetilde{\Pi}_2(x-\sigma_2 t-X_2(t))\right)\in C(0,+\infty;H^2(\mathbb{R})),
\end{aligned}
\end{equation}
and
\begin{equation}\label{hu1}
\begin{aligned}
&\sup\limits_{t\in[0,+\infty)}\left(\|v-\widetilde{v}\|^2_{H^2}
+\|u-\widetilde{u}\|^2_{H^2}+\tau\|\Pi-\widetilde{\Pi}\|^2_{H^2}\right)\\
&\qquad\qquad+\int_0^{+\infty}\left(\|\left((v-\widetilde{v})_{x}, (u-\widetilde{u})_{x}\right)\|^2_{H^1}+\|\Pi-\widetilde{\Pi}\|^2_{H^2}\right)\dif t\\
&\leq C_0\left(\|v_0-\widetilde{v}_0(\cdot)\|^2_{H^2}+\|u_0-\widetilde{u}_0(\cdot)\|^2_{H^2}
+\tau\|\Pi_0-\widetilde{\Pi}_0(\cdot)\|^2_{H^2}\right)+C_0\delta_0,
\end{aligned}
\end{equation}
where $C_0$ is a universal constant independent of  $\tau$.
In addition, as $t\rightarrow+\infty$,
\begin{equation}\label{1.10}
\begin{aligned}
\sup\limits_{x\in\mathbb{ R}}&\Big|v(t,x)-\left(\widetilde{v}_1(x-\sigma_1 t-X_1(t))+\widetilde{v}_2(x-\sigma_2 t-X_2(t))-v_m\right),\\
&u(t,x)-\left(\widetilde{u}_1(x-\sigma_1 t-X_1(t))+\widetilde{u}_2(x-\sigma_2 t-X_2(t))-u_m\right),\\
&\sqrt{\tau}\left(\Pi(t,x)-\left(\widetilde{\Pi}_1(x-\sigma_1 t-X_1(t))+\widetilde{\Pi}_2(x-\sigma_2 t-X_2(t))\right)\right)\Big|
 \rightarrow0,
\end{aligned}
\end{equation}
where
\begin{equation}\label{1.11}
\lim\limits_{t\rightarrow+\infty}|\dot{X}_i(t)|=0 \qquad for \qquad i=1,2.
\end{equation}
In addition, the shifts are well-separated in the following sense:
\begin{equation}\label{1.12}
X_1(t)+\sigma_1 t\le \frac{\sigma_1 t}{2}< 0< \frac{\sigma_2 t}{2}\le X_2(t)+\sigma_2 t,\quad t>0.
\end{equation}
\end{theorem}

Furthermore, based on the uniform estimates of error terms \eqref{hu1}, we have the following convergence theorem.
\begin{theorem}\label{th1.2}
Let $(v^{\tau}, u^{\tau}, \Pi^{\tau})$ be the global solutions obtained in Theorem \ref{th1}. Then, there exists functions $(v^0 ,u^0)\in L^{\infty}\left((0, +\infty);H^2\right)$ and $\Pi^0\in L^2\left((0, +\infty);H^2\right)$, such that, as $\tau\rightarrow0$
\begin{align*}
(v^{\tau}, u^{\tau})\rightharpoonup (v^0, u^0) \qquad weak-*\quad in \quad L^{\infty}\left((0, +\infty);H^2\right),\\
\Pi^{\tau}\rightharpoonup \Pi^0 \qquad weakly- \quad in \quad L^2\left((0, +\infty);H^2\right),
\end{align*}
where $(v^0, u^0)$ is the solution to the classical one-dimensional isentropic compressible Navier-Stokes equations \eqref{1.7}, with initial value $(v_0, u_0)$. Moreover,
\[
\Pi^0=\mu\frac{(u^0)_{x}}{v^0}.
\]
\end{theorem}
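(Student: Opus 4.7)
The plan is to extract uniform-in-$\tau$ bounds on $(v^\tau,u^\tau,\Pi^\tau)$ from the a priori estimate \eqref{hu1}, pass to weak and weak-$\ast$ limits via Banach--Alaoglu, upgrade $v^\tau,u^\tau$ to local strong convergence by Aubin--Lions, and then send $\tau\to 0$ in the weak formulation of \eqref{1.3}. The only nontrivial step is the limit of the relaxation equation $\eqref{1.3}_3$, which will identify $\Pi^0=\mu u^0_x/v^0$.

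The estimate \eqref{hu1} provides, independently of $\tau$, boundedness of $v^\tau-\widetilde v$ and $u^\tau-\widetilde u$ in $L^\infty(0,\infty;H^2)$, of their $x$-derivatives in $L^2(0,\infty;H^1)$, and of $\Pi^\tau-\widetilde\Pi$ in $L^2(0,\infty;H^2)$. Since the background $(\widetilde v,\widetilde u,\widetilde\Pi)$ is $\tau$-independent, Banach--Alaoglu along a subsequence yields $(v^\tau,u^\tau)\rightharpoonup^\ast(v^0,u^0)$ weak-$\ast$ in $L^\infty(0,\infty;H^2)$ and $\Pi^\tau\rightharpoonup\Pi^0$ weakly in $L^2(0,\infty;H^2)$, which gives the announced convergences. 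To handle the nonlinearities I would invoke Aubin--Lions: from $\eqref{1.3}_1$, $v^\tau_t=u^\tau_x$ is bounded in $L^\infty(0,\infty;H^1)$, and from $\eqref{1.3}_2$, $u^\tau_t=\Pi^\tau_x-p(v^\tau)_x$ is bounded on each $[0,T]$ in $L^2(H^1)$. Combined with the spatial $L^\infty(H^2)$ control, this produces $(v^\tau,u^\tau)\to(v^0,u^0)$ strongly in $L^2_{\mathrm{loc}}$ and, along a further subsequence, pointwise a.e., with $v^0\ge c>0$ inherited from the uniform $L^\infty$ lower bound on $v^\tau$.

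Passage to the limit in $\eqref{1.3}_{1,2}$ is then routine: linear terms by weak convergence, and the pressure via $p(v^\tau)_x=p'(v^\tau)v^\tau_x$ using continuity of $p'$ on the compact range of $v^\tau$ together with weak convergence of $v^\tau_x$. For $\eqref{1.3}_3$, testing against $\phi\in C_c^\infty((0,\infty)\times\mathbb R)$ and integrating by parts in time gives
\begin{equation*}
-\tau\int_0^\infty\!\!\int_{\mathbb R}\Pi^\tau\phi_t\,\dif x\,\dif t+\int_0^\infty\!\!\int_{\mathbb R}v^\tau\Pi^\tau\phi\,\dif x\,\dif t=\mu\int_0^\infty\!\!\int_{\mathbb R}u^\tau_x\phi\,\dif x\,\dif t.
\end{equation*}
The first term vanishes since $\tau\Pi^\tau\to 0$ strongly in $L^2(L^2)$; the product term tends to $\int v^0\Pi^0\phi$ because $v^\tau\phi\to v^0\phi$ strongly in $L^2$ while $\Pi^\tau\rightharpoonup\Pi^0$ weakly in $L^2$; and the right-hand side converges by weak convergence of $u^\tau_x$. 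This gives $v^0\Pi^0=\mu u^0_x$ a.e., and $v^0\ge c>0$ yields $\Pi^0=\mu u^0_x/v^0$. Substituting into the limit of the momentum equation then recovers \eqref{1.7}, while $(v^0_0,u^0_0)$ is identified as the weak limit of $(v^\tau_0,u^\tau_0)$ via the $C([0,T];L^2_{\mathrm{loc}})$ trace at $t=0$.

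The main obstacle is controlling the product $v^\tau\Pi^\tau$: because $\eqref{1.3}_3$ only gives $\Pi^\tau_t=O(\tau^{-1})$, no compactness on the $\Pi$-factor is directly available, so the strong convergence must be supplied entirely by the $v$-factor. This is what forces the Aubin--Lions argument for $v^\tau$ built on the kinematic identity $v^\tau_t=u^\tau_x$, and is the crucial ingredient turning the uniform estimate \eqref{hu1} into the relaxation limit asserted by Theorem \ref{th1.2}.
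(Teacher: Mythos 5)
Your overall strategy is the one the paper uses: uniform bounds from \eqref{hu1}, weak-$\ast$/weak limits, time-derivative bounds plus Aubin--Lions to get strong (local) convergence of $(v^\tau,u^\tau)$, the observation that $\tau\Pi^\tau\to 0$ so the relaxation term drops, and the strong$\times$weak argument for the product $v^\tau\Pi^\tau$ identifying $\Pi^0=\mu(u^0)_x/v^0$. However, there is one genuine flaw in your second paragraph: the background $(\widetilde v,\widetilde u,\widetilde\Pi)$ is \emph{not} $\tau$-independent. The viscous shock profiles solve the $\tau$-dependent ODEs \eqref{2.4}, \eqref{2.6} (and $\widetilde\Pi_i$ is expressed through $(\widetilde v_i)_{\xi_i}$ with $\tau$-dependent coefficients), and the shifted composite wave \eqref{3.1} also depends on $\tau$ through the shifts $X_i(t)$, which are defined in \eqref{2.10} in terms of the solution $(v^\tau,u^\tau,\Pi^\tau)$ itself. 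Consequently \eqref{hu1} bounds $v^\tau-\widetilde v^{\tau}$, $u^\tau-\widetilde u^{\tau}$, $\Pi^\tau-\widetilde\Pi^{\tau}$ with a $\tau$-dependent reference, and your one-line deduction of weak-$\ast$ compactness for $(v^\tau,u^\tau)$ ``since the background is $\tau$-independent'' does not stand as written. The repair is exactly what the paper supplies in Section 5: Lemma \ref{le2.1} gives bounds on the profiles $(\widetilde v_i)^\tau,(\widetilde u_i)^\tau,(\widetilde\Pi_i)^\tau$ that are uniform in $\tau$, so one first extracts strong $H^2$ limits $(\widetilde v_i)^\tau\to(\widetilde v_i)^0$, $(\widetilde u_i)^\tau\to(\widetilde u_i)^0$ and a weak limit $(\widetilde\Pi_i)^\tau\rightharpoonup\mu(\widetilde u_i)^0_x/(\widetilde v_i)^0$ (passing $\tau\to0$ in \eqref{2.2}), identifying the limiting background as the classical Navier--Stokes traveling waves; only then do the uniform bounds on the perturbations yield the weak-$\ast$ limits of $(v^\tau,u^\tau)$ and the decomposition $v^0=\Phi^0+\widetilde v^0$, $u^0=\Psi^0+\widetilde u^0$.

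Apart from this, your limit passage in the weak formulation of $\eqref{1.3}_3$ and in the momentum equation matches the paper's argument, and your Aubin--Lions step (via $v^\tau_t=u^\tau_x$ and the momentum equation) is the analogue of the paper's compactness of $(\Phi^\tau,\Psi^\tau)$ in $C([0,T];H^{2-\alpha})$ obtained from \eqref{4.1}. So the proposal is essentially on the paper's route, but you must replace the false ``$\tau$-independence of the background'' by the uniform-in-$\tau$ bounds of Lemma \ref{le2.1} together with a compactness/identification step for the $\tau$-dependent shifted composite waves; without that step the limit functions in the statement are not well defined from \eqref{hu1} alone.
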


The structure of this paper is as follows. Some basic concept, including viscous shock wave and $a$-contraction with shifts theory are given in Section 2. In Section 3, we reformulate the original problem and present  the a priori estimates (Proposition \ref{p1}) which gives the proof of Theorem \ref{th1} immediately. In Section 4, we give a proof  of Proposition \ref{p1}. Finally, in Section 5, we prove that the solutions of  relaxed system \eqref{1.3} converges globally in time to  that of  classical system \eqref{1.7}.

\textbf{Notations:}  $L^p(\mathbb R)$ and $W^{s,p}(\mathbb R)$  ($1\le p \le\infty$) denote the  usual Lebesgue  and Sobolev spaces over $\mathbb R$ with the norm $\|\cdot \|_{L^p}$ and $\|\cdot\|_{W^{s,p}}$, respectively. Note that, when $s=0$, $W^{0,p}=L^p$. For $p=2$, $W^{s, 2}$ are abbreviated to $H^s$ as usual.
Let $T$ and $B$ be a positive constant and a Banach space, respectively. $C^k(0,T; B)(k \ge 0 )$ denotes the space of $B$-valued $k$-times continuously differentiable functions on $[0,T]$, and $L^p(0,T; B)$ denotes the space of $B$-valued $L^p$-functions on $[0,T]$. The corresponding space $B$-valued functions on $[0,\infty)$ are defined in an analogous manner.

\section{Preliminaries}

\subsection{Traveling wave}
In this part, we first show the existence of two traveling wave solutions for system \eqref{1.3}.
Let $\xi_{i}=x-\sigma_i t,i=1,2$, with $\sigma_1^2=\frac{p(v_-)-p(v_m)}{v_m-v_-}$ and $\sigma_2^2=\frac{p(v_m)-p(v_+)}{v_+-v_m}$ are the speed of 1-shock wave and 2-shock wave, respectively. Assume the functions $(\widetilde{u}_i,\widetilde{v}_i,\widetilde{\Pi}_i)(\xi_i)$ satisfy
\begin{equation}\label{2.1}
\begin{aligned}
(\widetilde{u}_1,\widetilde{v}_1,\widetilde{\Pi}_1)(\xi_1)\rightarrow(v_m,u_m,0),
\quad(\widetilde{u}_2,\widetilde{v}_2,\widetilde{\Pi}_2)(\xi_2)\rightarrow(v_+,u_+,0),\quad(\xi\rightarrow+\infty),\\
(\widetilde{u}_1,\widetilde{v}_1,\widetilde{\Pi}_1)(\xi_1)\rightarrow(v_-,u_-,0),\quad
(\widetilde{u}_2,\widetilde{v}_2,\widetilde{\Pi}_2)(\xi_2)\rightarrow(v_m,u_m,0)\quad(\xi\rightarrow-\infty).
\end{aligned}
\end{equation}
 Plugging the form $(\widetilde{u}_i,\widetilde{v}_i,\widetilde{\Pi}_i)(\xi_i)$ into system \eqref{1.3}, we have the following ordinary differential equations
\begin{equation}\label{2.2}
\begin{cases}
-\sigma_i(\widetilde{v}_i)_{\xi_i}-(\widetilde{u}_i)_{\xi_i}=0,\\
-\sigma_i(\widetilde{u}_i)_{\xi_i}+p(\widetilde{v}_i)_{\xi_i}=(\widetilde{\Pi}_i)_{\xi_i},\\
-\sigma_i(\tau\widetilde{\Pi}_i)_{\xi_i}+\widetilde{v}_i\widetilde{\Pi}_i=\mu(\widetilde{u}_i)_{\xi_i},
\end{cases}
\end{equation}
with the far field condition \eqref{2.1}. For $i=2$, integrating the equations $\eqref{2.2}_1$ and $\eqref{2.2}_2$ with respect to $\xi_2$, it holds
\begin{equation}\label{2.3}
\begin{cases}
\sigma_2\widetilde{v}_2+\widetilde{u}_2=\sigma_2 v_m+u_m=\sigma_2 v_++u_+,\\
\widetilde{\Pi}_2=-\sigma_2(\widetilde{u}_2-u_m)+(p(\widetilde{v}_2)-p(v_m)).
\end{cases}
\end{equation}
Substituting \eqref{2.3} and $\eqref{2.2}_1$ into $\eqref{2.2}_3$, we derive that
\begin{equation}\label{2.4}
(\widetilde{v}_2)_{\xi_2}=\frac{\widetilde{v}_2h_2(\widetilde{v}_2)}{\mu\sigma_2+\tau\sigma_2 h_2^{\prime}(\widetilde{v}_2)},
\end{equation}
where $h_2(\widetilde{v}_2)=\sigma_2^2(v_m-\widetilde{v}_2)+(p(v_m)-p(\widetilde{v}_2))$.

Similarly, for $i=1$, we have
\begin{equation}\label{2.6}
(\widetilde{v}_1)_{\xi_1}=\frac{\widetilde{v}_1h_1(\widetilde{v}_1)}{\mu\sigma_1+\tau\sigma_1 h_1^{\prime}(\widetilde{v}_1)},
\end{equation}
where $h_1(\widetilde{v}_1)=\sigma_1^2(v_--\widetilde{v}_1)+(p(v_-)-p(\widetilde{v}_1))$.

The following lemma show the existence and properties of solutions for \eqref{2.2}.
\begin{lemma}\label{le2.1}
Let \eqref{2.5} hold.
 For any states $(v_-,u_-),(v_m,u_m),(v_+,u_+)\in \mathbb{R}_+\times\mathbb{R}$ and $\sigma_1<0,\sigma_2>0$ satisfying R-H condition and Lax condition, there exists a positive constant $C$ independent of $\tau$ such that the following is true: the traveling wave solutions $(\widetilde{u}_1,\widetilde{v}_1,\widetilde{\Pi}_1)(\xi_1)$ connecting $(v_-,u_-,0)$ and $(v_m,u_m,0)$ and $(\widetilde{u}_2,\widetilde{v}_2,\widetilde{\Pi}_2)(\xi_2)$ connecting $(v_m,u_m,0)$ and $(v_+,u_+,0)$ exist uniquely and satisfy
\begin{align*}
(\widetilde{v}_1)_{\xi_1}<0,\quad
(\widetilde{v}_2)_{\xi_2}>0,\quad
(\widetilde{v}_i)_{\xi_i}\sim(\widetilde{u}_i)_{\xi_i},
\end{align*}
and
\begin{align*}
&|\widetilde{v}_i(\xi_i)-v_m|\leq C\delta_i e^{-C\delta_i|\xi_i|},
|\widetilde{u}_i(\xi_i)-u_m|\leq C\delta_i e^{-C\delta_i|\xi_i|},\quad  \forall (-1)^i\xi_i<0,\\
&|\widetilde{\Pi}_i|\leq C\delta_i^2 e^{-C\delta_i|\xi_i|},\quad \quad
|\partial_{\xi_i}(\widetilde{v}_i,\widetilde{u}_i)|\leq C\delta_i^2e^{-C\delta_i|\xi_i|},  \,   |\partial_{\xi_i} \widetilde{\Pi}_i|\le C \delta_i |(\widetilde v_i)_{\xi_i}| \quad \forall\xi_i\in\mathbb{R},\\
&|\partial^k_{\xi_i}(\widetilde{v}_i,\widetilde{u}_i,\widetilde{\Pi}_i)|\leq C\delta_i|\partial_{\xi_i}\widetilde{v}_i|, \qquad \forall\xi_i\in\mathbb{R},
\end{align*}
for $i=1,2$ and $k=2,3,4$, where $\delta_i$ denote the strength of the shock as $\delta_1:=|p(v_-)-p(v_m)|\sim|v_--v_m|\sim|u_--u_m|$ and $\delta_2:=|p(v_m)-p(v_+)|\sim|v_m-v_+|\sim|u_m-u_+|$.
\end{lemma}
\begin{proof}
We only give a proof for case $i=1$. The case $i=2$ follows in a similar way.
Firstly, we note that $\sigma^2_1=\frac{p(v_-)-p(v_m)}{v_m-v_-}$ and $h_1(\widetilde{v}_1)=\sigma_1^2(v_--\widetilde{v}_1)+(p(v_-)-p(\widetilde{v}_1))$, then we have
\[
h_1(\widetilde{v}_1)=(v_--\widetilde{v}_1)\left(P(\widetilde{v}_1)-P(v_m)\right),
\]
where $P(\widetilde{v}_1)=\frac{p(v_-)-p(\widetilde{v}_1)}{v_--\widetilde{v}_1}$.

In addition, using the entropy inequality: $-\sqrt{-p^{\prime}(v_-)}>-\sqrt{-p^{\prime}(v_m)}$, we get $v_->v_m$. So, we derive that
\begin{align}\label{new-hu-1}
0<	v_m<\widetilde{v}_1<v_-.
\end{align}
 Assuming $\delta_1$ is sufficiently small such that 
$
0 < \widetilde{v}_1 - v_m < v_- - v_m < C\delta_1<\frac{p^{\prime\prime}(v_m)}{4}.
$
and applying Taylor expansion to the function $P(\widetilde{v}_1)$ about $v_m$, we have  
\[
|P(\widetilde{v}_1)- P(v_m)- P'(v_m)(\widetilde{v}_1 - v_m)| \le C|\widetilde{v}_1 - v_m|^2 \le C\delta_1 (\widetilde{v}_1 - v_m).
\]
Next, the concavity of $p$ implies
\[
P^{\prime}(v_m)=\frac{-p^{\prime}(v_m)(v_--v_m)+p(v_-)-p(v_m)}
{(v_--v_m)^2}
=\frac{p^{\prime\prime}(v_\ast)}{2}>0, \quad v_\ast\in(v_m, v_-).
\]
 Thus, we get 
 \[
 P(\widetilde{v}_1)- P(v_m)\geq \frac{p^{\prime\prime}(v_m)}{4}(\widetilde{v}_1 - v_m)>0.
 \]
So, we derive that
 \begin{equation}\label{1225-1}
h_1(\widetilde v_1)>0.
\end{equation}
On the other hand, using \eqref{3.15-2} and \eqref{2.5}, it holds
\begin{equation}\label{1225-2}
	\sigma_1(\mu+\tau h^\prime_1(\widetilde{v}_1))<0.
\end{equation}
Therefore, combining \eqref{new-hu-1}, \eqref{1225-1}, \eqref{1225-2} and \eqref{2.6}, we conclude that 
 $(\widetilde{v}_1)_{\xi_1}<0$.

Let $f(\widetilde{v}_1)=\frac{\widetilde{v}_1h_1(\widetilde{v}_1)}{\mu\sigma_1+\tau\sigma_1 h^{\prime}_1(\widetilde{v}_1)}$. For any $v_1,v_2\in(v_m,v_-)$, using \eqref{2.5}, one has
\begin{align*}
|f(v_1)-f(v_2)|&=\Big|\frac{v_1h_1(v_1)}{\mu\sigma+\tau\sigma h_1^{\prime}(v_1)}-\frac{v_2h_1(v_2)}{\mu\sigma+\tau\sigma h_1^{\prime}(v_2)}\Big|\\
&\le \Big|\frac{v_1h_1(v_1)-v_2h_1(v_2)}{\mu\sigma+\tau\sigma h_1^{\prime}(v_1)}\Big|
+\Big|v_2h_1(v_2)\frac{\sigma\tau\left(h_1^{\prime}(v_1)-h_1^{\prime}(v_2)\right)}
{(\mu\sigma+\tau\sigma h_1^{\prime}(v_1))(\mu\sigma+\tau\sigma h_1^{\prime}(v_2))}\Big|\\
&\le C\left(|v_1-v_2|+|h_1(v_1)-h_1(v_2)|+|h_1^{\prime}(v_1)-h_1^{\prime}(v_2)|\right)\\
&\le C|v_1-v_2|.
\end{align*}
So, $f(\widetilde{v}_1)$ satisfies the usual Lipschitz condition. This together with \eqref{new-hu-1} implies that there exists a unique local solution of \eqref{2.6}

Next, for $\xi_1<0$, using \eqref{2.6}, we have
\[
(v_--\widetilde{v}_1)_{\xi_1}<\frac{2v_-p^{\prime\prime}(v_\ast)(v_--\widetilde{v}_1)(\widetilde{v}_1-v_m)}{-\mu\sigma_1}
\le C(v_--\widetilde{v}_1)(\widetilde{v}_1-v_m)\le C\delta_1(v_--\widetilde{v}_1),
\]
which gives
\[
v_--\widetilde{v}_1 \le C\delta_1e^{C\delta_1\xi_1}.
\]
On the other hand, if $\xi_1>0$, we have
\[
(v_m-\widetilde{v}_1)_{\xi_1}>\frac{v_mp^{\prime\prime}(v_\ast)(v_--\widetilde{v}_1)(\widetilde{v}_1-v_m)}{2\mu\sigma_1}
\ge C(v_--\widetilde{v}_1)(\widetilde{v}_1-v_m)\ge C\delta_1(\widetilde{v}_1-v_m).
\]
Thus, it yields
\[
\widetilde{v}_1-v_m \le C\delta_1e^{-C\delta_1\xi_1}.
\]
Combining the above estimates, one obtain
\[
(\widetilde{v}_1)_{\xi_1}\le C\delta_1^2e^{-C\delta_1|\xi_1|}.
\]
For high-order estimates, using \eqref{2.6} and \eqref{2.5}, we have
\[
|(\widetilde{v}_1)_{\xi_1\xi_1}|=\Big|
\frac{(\widetilde{v}_1)_{\xi_1}h((\widetilde{v}_1)_{\xi_1})
+\widetilde{v}_1h^{\prime}(\widetilde{v}_1)(\widetilde{v}_1)_{\xi_1}}
{\mu\sigma_1+\tau\sigma_1 h^{\prime}(\widetilde{v}_1)}
+\frac{\widetilde{v}_1 h(\widetilde{v}_1)\tau\sigma_1 h^{\prime\prime}(\widetilde{v}_1)(\widetilde{v}_1)_{\xi_1}}
{\left(\mu\sigma_1+\tau\sigma_1 h^{\prime}(\widetilde{v}_1)\right)^2}\Big|
\le C\delta_1|(\widetilde{v}_1)_{\xi_1}|.
\]
Similarly, for $k=3,4$, we can get $|\partial_{\xi_1}^k\widetilde{v}_1|\le C\delta_1|(\widetilde{v}_1)_{\xi_1}|$.

Using $\eqref{2.2}_1$, we get
\[
|\widetilde{u}_1-u_-|=|\sigma_1(\widetilde{v}_1-v_-)|\leq C\delta_1 e^{-C\delta_1|\xi_1|},\quad \xi_1<0,
\]
and
\[
|\widetilde{u}_1-u_m|=|\sigma_1(\widetilde{v}_1-v_m)|\leq C\delta_1 e^{-C\delta_1|\xi_1|},\quad \xi_1>0.
\]
Similarly,  one has
\[
|(\widetilde{u}_1)_{\xi_1}|=|\sigma_1(\widetilde{v}_1)_{\xi_1}|\le C\delta_1^2e^{-C\delta_1|\xi_1|},
\]
and
\[
|\partial_{\xi_1}^k\widetilde{u}_1|=|\sigma_1\partial_{\xi_1}^k\widetilde{v}_1|
\le C\delta_1|(\widetilde{v}_1)_{\xi_1}|,
\]
where $k=2,3,4$.

Substituting $\eqref{2.2}_1$ and $\eqref{2.2}_2$ into $\eqref{2.2}_3$, it yields
\begin{equation}\label{11.12}
\widetilde{\Pi}_1=\frac{-\sigma_1^3\tau+\sigma_1\tau p^{\prime}(\widetilde{v}_1)-\mu\sigma_1}{\widetilde{v}_1}(\widetilde{v}_1)_{\xi},
\end{equation}
and thus
\begin{equation}\label{eq29}
|\widetilde{\Pi}_1|\le C |(\widetilde{v}_1)_{\xi}|.
\end{equation}
Taking the derivative of equation \eqref{11.12} with respect to $\xi_1$, we derive that
\[
(\widetilde{\Pi}_1)_{\xi_1}=\frac{\sigma_1^3\tau+\sigma_1\tau p^{\prime}(\widetilde{v}_1)-\mu\sigma_1}{\widetilde{v}_1}(\widetilde{v}_1)_{\xi_1\xi_1}
+\frac{\sigma_1\tau p^{\prime\prime}(\widetilde{v}_1)(\widetilde{v}_1)_{\xi_1}(\widetilde{v}_1)-
(\sigma_1^3\tau+\sigma_1\tau p^{\prime}(\widetilde{v}_1)-\mu\sigma_1)(\widetilde{v}_1)_{\xi_1}}
{(\widetilde{v}_1)^2}(\widetilde{v}_1)_{\xi_1}.
\]
Noting that $(\widetilde{v}_1)_{\xi_1\xi_1}\le C\delta_1(\widetilde{v}_1)_{\xi_1}$, we have
\[
|(\widetilde{\Pi}_1)_{\xi_1}|\le C\delta_1(\widetilde{v}_1)_{\xi_1}.
\]
Similarly, for $k=2,3,4$,  we have $|\partial_{\xi}^k\widetilde{\Pi}_1|\le C\delta_1(\widetilde{v}_1)_{\xi_1}$. Thus,  the proof of this lemma is finished.
\end{proof}

Now, we turn our attention to the composite waves formed by two viscous shocks. Initially, we note that for given end states $(v_{\pm},u_{\pm},0)\in\mathbb{R}^+\times\mathbb{R}\times\mathbb{R}$ as specified in \eqref{1.4}, there exists a unique intermediate state $(v_m,u_m,0)$ such that $(v_-,u_-,0)$ is connected to $(v_m,u_m,0)$ by the 1-shock wave, and the 2-shock wave connects $(v_m,u_m,0)$ and $(v_+,u_+,0)$ for system \eqref{1.5}-\eqref{1.6}, see \cite{SMO}. The composite waves of the superposition of two viscous shocks is defined as follows:
\begin{equation}\label{2.9}
\left(\widetilde{v}_1(x-\sigma_1 t)+\widetilde{v}_2(x-\sigma_2 t)-v_m,
\widetilde{u}_1(x-\sigma_1 t)+\widetilde{u}_2(x-\sigma_2 t)-u_m,
\widetilde{\Pi}_1(x-\sigma_1 t)+\widetilde{\Pi}_2(x-\sigma_2 t)\right).
\end{equation}

\subsection{Construction of shifts}

We define the shifts $(X_1(t),X_2(t))$ as a solution to the following system of ODEs:
\begin{equation}\label{2.10}
\begin{cases}
\begin{aligned}
&\dot{X}_i(t)=-\frac{M}{\delta_i}
\left[\int_{\mathbb{R}}\frac{a}{\sigma_i}(\widetilde{u}_i^{X_i})_x(p(v)-p(\widetilde{v}))\dif x
-\int_{\mathbb{R}}a\left(p(\widetilde{v}_i^{X_i})\right)_x(v-\widetilde{v})\dif x\right],\\
&X_i(t)=0,
\end{aligned}
\end{cases}
\end{equation}
where $i=1,2$, $M$ is the specific constant chosen as $M:=\frac{5(\gamma+1)}{8\gamma p(v_m)}(-p^{\prime}(v_m))^{\frac{3}{2}}$ and $f^{X_i(t)}$ denotes a function $f$ shifted by $X_i(t)$, that is $f^{X_i(t)}(x):=f(x-X_i(t))$. The shifted weight function $a$ is defined by
\begin{align}\label{2.11}
a(t,x):=a_1(x-\sigma_1t-X_1(t))+a_2(x-\sigma_2t-X_2(t))-1,
\end{align}
where $a_1,a_2$ are two weight functions associated with 1-shock and 2-shock, respectively, defined by
\[
a_i(x-\sigma_it)=1+\frac{\lambda_i(p(v_m)-p(\widetilde{v}_i(x-\sigma_it)))}{\delta_i},
\]
where $\lambda_1,\lambda_2$ are small constants  such that $\frac{\delta_i}{\lambda_i}<\delta_0$ and $\lambda_i<C\sqrt{\delta_i}<C\delta_0$ for $i=1,2$.

On the other hand, we known that $1<a<2$ and
\[
(a_i)_x=-\frac{\lambda_i}{\delta_i}(p(\widetilde{v}_i))_x.
\]
Then, using Lemma \ref{le2.1}, we derive that
\begin{equation}\label{2.12}
\sigma_i(a_i)_x=-\sigma_i\frac{\lambda_i}{\delta_i}p^{\prime}(\widetilde{v}_i)(\widetilde{v}_i)_x>0
\end{equation}
and
\begin{equation}\label{2.13}
|(a_i)_x|\le C\frac{\lambda_i}{\delta_i}|\widetilde{v}_i|.
\end{equation}

In addition, based on the Cauchy-Lipschitz theorem, we have the following lemma, see \cite{WY, SMJ}.
\begin{lemma}\label{le3.10}
For any $c_1, c_2>0$, there exists a constant $C>0$ such that the following is true. For any $T>0$, and any function $v\in L^{\infty}((0,T )\times \mathbb{R})$ veriying
\begin{equation}\label{3.11-1}
c_1<v(t,x)<c_2,\quad \forall(t,x)\in[0, T]\times\mathbb{R},
\end{equation}
the ODE \eqref{2.10} has a unique absolutely continuous solution $(X_1, X_2)$ on $[0, T]$. Moreover,
\begin{equation}\label{3.10-2}
|X_1(t)|+|X_2(t)|\le C t, \qquad \forall \, 0\le t \le T.
\end{equation}
\end{lemma}

We remark that the shift function above is slightly different with that in \cite{SMJ} due to the unavailability of BD entropy. Nonetheless, using similar proof as in \cite{SMJ}, we can get that \eqref{2.10} has a unique absolutely continuous solution defined on any interval $[0,T]$ provided  $v(t,x)$ is bounded both above and below for any $(t,x)\in[0,T]\times \mathbb{R}$. For completeness, we give a  proof in Appendix A.

\section{Local solution and a priori estimates}

Firstly, by using the theory of symmetric hyperbolic system, we have the following local existence theory, see \cite{JR, WY, MV20, TA}.
\begin{theorem}\label{th3.1}
Let $\underline{v}$ and $\underline{u}$ be smooth monotone functions such that
\[
\underline{v}(x)=v_{\pm} \quad \underline{u}(x)=u_{\pm} \quad for \quad \pm x\geq1.
\]
For any constants $M_0,M_1,\kappa_1,\kappa_2,\kappa_3,\kappa_4$ with $M_1>M_0>0$ and $\kappa_1>\kappa_2>\kappa_3>\kappa_4>0$, there exists a constant $T_0>0$ such that if
\[
\begin{aligned}
&\|v_0-\underline{v}\|_{H^2}+\|u_0-\underline{u}\|_{H^2}+\sqrt{\tau}\|\Pi_0\|_{H^2}\leq M_0,\\
&0<\kappa_3\leq v_0(x)\leq\kappa_2,\qquad \forall x\in\mathbb{R},
\end{aligned}
\]
then the system \eqref{1.3} with initial condition \eqref{1.4} has a unique solution $(v, u, \Pi)$ on $[0,T_0]$ such that
\[
\begin{aligned}
(v-\underline{v}, u-\underline{u}, \Pi)\in C([0,T_0];H^2),
\end{aligned}
\]
and
\[
\|v-\underline{v}\|_{L^{\infty}(0,T_0;H^2)}+\|u-\underline{u}\|_{L^{\infty}(0,T_0;H^2)}
+\sqrt{\tau}\|\Pi\|_{L^{\infty}(0,T_0;H^2)}\leq M_1.
\]
Moreover:
\[
\kappa_4\leq v(t,x)\leq\kappa_1,\qquad \forall(t,x)\in[0,T_0]\times\mathbb{R}.
\]
\end{theorem}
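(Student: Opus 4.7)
The plan is to invoke the standard local well-posedness theory for quasilinear symmetric hyperbolic systems with relaxation source terms, following the framework of the cited references. I would first write \eqref{1.3} in the compact form $\partial_t U + A(U)\partial_x U = F(U)$ with $U = (v, u, \Pi)^{\top}$, and verify that the diagonal matrix $S(v) := \operatorname{diag}(-p'(v),\, 1,\, \tau/\mu)$ is a smooth positive definite symmetrizer: since $p'(v) = -\gamma v^{-\gamma-1} < 0$ for $v>0$, the product $S(v)A(v)$ is symmetric and $S(v)$ is uniformly coercive on any compact interval $[\kappa_4,\kappa_1]$. The relaxation source $(0,0,-v\Pi/\tau)^{\top}$, paired against $S(v)U$ in the symmetrized $L^2$-inner product, contributes the non-negative dissipative term $v|\Pi|^2/\mu$. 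This precisely explains why $\Pi$ appears weighted by $\sqrt{\tau}$ in the energy norm $\|v-\underline v\|_{H^2}+\|u-\underline u\|_{H^2}+\sqrt{\tau}\,\|\Pi\|_{H^2}$.

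Because the initial data have distinct limits at $\pm\infty$, I would subtract the smooth monotone reference $(\underline v,\underline u)$ and pass to perturbation variables $(w,z,\Pi) := (v-\underline v,\,u-\underline u,\,\Pi)$, which solve a symmetric hyperbolic system with $H^2$ initial data and additional smooth inhomogeneous terms depending on $\underline v$, $\underline u$ and their derivatives. The solution is then constructed by a Picard (or Friedrichs mollifier) iteration: at each step one solves the linearized symmetric hyperbolic system, derives uniform $H^2$ bounds on the iterates over a sufficiently short interval $[0,T_0]$, and extracts a strong limit. The standard energy estimate — obtained by applying $\partial_x^k$ for $k=0,1,2$, pairing against $S(v)\partial_x^k U$, and using that $H^2(\mathbb{R})$ is a Banach algebra to absorb the quasilinear commutator terms — closes provided $T_0$ is chosen depending only on $M_0,M_1,\kappa_1,\ldots,\kappa_4$.

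The main technical point I expect to wrestle with is preserving the pointwise bounds $\kappa_4\le v(t,x)\le\kappa_1$ throughout the iteration, since the symmetrizer degenerates as $v\to 0$. This is controlled via the first equation $v_t=u_x$ together with the one-dimensional Sobolev embedding $H^2(\mathbb{R})\hookrightarrow W^{1,\infty}(\mathbb{R})$: if $\|u\|_{L^\infty_t H^2}\le CM_1$, then $|v(t,x)-v_0(x)|\le T_0\|u_x\|_{L^\infty_{t,x}}\le CM_1 T_0$, so choosing $T_0$ small relative to $\min\{\kappa_3-\kappa_4,\,\kappa_1-\kappa_2\}$ keeps $v$ in the prescribed range and allows the symmetrizer bounds to be maintained. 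Uniqueness follows from an analogous energy estimate for the difference of two solutions. The overall argument is a routine application of the symmetric hyperbolic framework in \cite{JR, MV20, TA}; the only nonstandard feature is the $\tau$-dependent scaling in $S(v)$, which is dictated by the Maxwell-type relaxation and is what makes the theorem valid uniformly for small $\tau$.
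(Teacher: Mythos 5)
Your proposal is correct and follows essentially the same route as the paper, which states Theorem \ref{th3.1} without proof and simply appeals to the standard local existence theory for symmetric hyperbolic systems in \cite{JR, WY, MV20, TA}; your symmetrizer $\operatorname{diag}(-p'(v),1,\tau/\mu)$, the subtraction of $(\underline v,\underline u)$, the $H^2$ energy/iteration scheme, and the short-time control of the pointwise bounds on $v$ via $v_t=u_x$ are exactly the ingredients that theory supplies, including the $\sqrt{\tau}$-weighting of $\Pi$ that makes the estimates uniform in $\tau$.
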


Next, let the shifted composite waves $(\widetilde{v}, \widetilde{u}, \widetilde{\Pi})(t, x)$ as follows:
\begin{align}\label{3.1}
\left(\widetilde{v}, \widetilde{u}, \widetilde{\Pi}\right)(t, x):=
\left(\widetilde{v}_1^{X_1}+\widetilde{v}_2^{X_2}-v_m,
\widetilde{u}_1^{X_1}+\widetilde{u}_2^{X_2}-u_m,
\widetilde{\Pi}_1^{X_1}+\widetilde{\Pi}_2^{X_2}\right),
\end{align}
which satisfies the following shifted composite waves equations
\begin{equation}\label{3.2}
\begin{cases}
\widetilde{v}_t+\sum\limits_{i=1}\limits^2\dot{X}_i(\widetilde{v}_i)^{X_i}_x-\widetilde{u}_x=0,\\
\widetilde{u}_t+\sum\limits_{i=1}\limits^2\dot{X}_i(\widetilde{u}_i)^{X_i}_x+p(\widetilde{v})_x
=\widetilde{\Pi}_x+F_1,\\
\tau\widetilde{\Pi}_t+\tau\sum\limits_{i=1}\limits^2\dot{X}_i(\widetilde{\Pi}_i)^{X_i}_x+\widetilde{v}\widetilde{\Pi}
=\mu\widetilde{u}_x+F_2,
\end{cases}
\end{equation}
where $F_1,F_2$ defined as
\begin{equation}\label{3.3}
F_1:=p(\widetilde{v})_x-p(\widetilde{v}_1)^{X_1}_x-p(\widetilde{v}_2)^{X_2}_x,\quad
F_2:=\left(\widetilde{v}^{X_2}_2-v_m\right)\widetilde{\Pi}_1^{X_1}
+\left(\widetilde{v}^{X_1}_1-v_m\right)\widetilde{\Pi}_2^{X_2}.
\end{equation}
Next, we will focus on the time-asymptotic stability of the solutions to \eqref{1.3} with initial data \eqref{1.4} to the the shifted composite waves for system \eqref{3.2}.
Denote $\left(\widetilde v_0( \cdot), \widetilde u_0( \cdot), \widetilde \Pi_0( \cdot)\right)=\left(\widetilde v, \widetilde u, \widetilde \Pi\right)(t, \cdot)\Big|_{t=0}$. Then,  one can easily get
\begin{align}\label{hu4.11}
\sum\limits_{\pm}\left(\|(\widetilde v_0-v_{\pm},\widetilde u_0-u_{\pm}, \sqrt{\tau}\widetilde \Pi_0)\|_{H^2(\mathbb{R}_{\pm})}\right)
\le C (\delta_1+\delta_2).
\end{align}
The following proposition gives the a priori estimates of the error term: $(v-\widetilde{v}, u-\widetilde{u}, \Pi-\widetilde{\Pi})$.

\begin{prop}\label{p1}
Let $(v, u, \Pi)$ be local solutions given by Theorem \ref{th3.1} on $[0,T]$ for some $T>0$ and $(\widetilde{v},\widetilde{u},\widetilde{\Pi})$ be defined in \eqref{3.1}.  Then there exist positive constants $\delta_0,\varepsilon_1$ such that if two independent shock waves strength satisfy $\delta_1,\delta_2<\delta_0$ and
\begin{equation}\label{3.5}
\sup_{0\le t\le T} \|(v-\widetilde{v}, u-\widetilde{u}, \sqrt{\tau}(\Pi-\widetilde{\Pi}))\|_{H^2}\leq\varepsilon_1,
\end{equation}
then the following estimates hold
\begin{equation}\label{3.6}
\begin{aligned}
&\sup\limits_{t\in[0,T]}\left(\|v-\widetilde{v}\|^2_{H^2}
+\|u-\widetilde{u}\|^2_{H^2}+\tau\|\Pi-\widetilde{\Pi}\|^2_{H^2}\right)
+\int_0^T\sum\limits_{i=1}\limits^2\delta_i|\dot{X}_i|^2\dif t\\
&\qquad\qquad+\int_0^T\left(G^s(U)
+\|\left((v-\widetilde{v})_{x}, (u-\widetilde{u})_{x}\right)\|^2_{H^1}+\|\Pi-\widetilde{\Pi}\|^2_{H^2}\right)\dif t\\
&\leq C_0\left(\|v_0-\widetilde{v}_0(\cdot)\|^2_{H^2}+\|u_0-\widetilde{u}_0(\cdot)\|^2_{H^2}
+\tau\|\Pi_0-\widetilde{\Pi}_0(\cdot)\|^2_{H^2}\right)+C_0\delta_0.
\end{aligned}
\end{equation}
Here $C_0$ independent of $T$ and $\tau$ and
\begin{equation}\label{3.7}
G^s(U):=\sum\limits_{i=1}\limits^2\int_\mathbb{R}|(\widetilde{v}_i)^{X_i}_{x}| |\phi_i(v -\widetilde{v})|^2\dif x
\end{equation}
where cutoff functions $\phi_1,\phi_2$ are defined as follows
\begin{equation}\label{ctf9.15}
\begin{aligned}
&\phi_1:=
\begin{cases}
1,\qquad\qquad\qquad \qquad \qquad \qquad \qquad    if \quad  x<\frac{X_1(t)+\sigma_1t}{2},  \\
0,\qquad\qquad\qquad \qquad \qquad\qquad \qquad     if \quad  x>\frac{X_1(t)+\sigma_1t}{2},  \\
\text{linearly decreasing 1 to 0} \qquad\qquad   if \quad  \frac{X_1(t)+\sigma_1t}{2}\ge x\le \frac{X_1(t)+\sigma_1t}{2},
\end{cases}\\
&\phi_2(x):=1-\phi_1(x).
\end{aligned}
\end{equation}
In addition, by \eqref{2.10},
\begin{equation}\label{3.8}
|\dot{X}_1(t)|+|\dot{X}_2(t)|\leq C_0\|(v-\widetilde{v})(t,\cdot)\|_{L^{\infty}},\qquad\forall t\leq T.
\end{equation}
\end{prop}
We will give the proof of Proposition \ref{p1} in Section 4.

Now, by using Theorem \ref{th3.1} and Proposition \ref{p1}, we are able to prove Theorem \ref{th1}.

{\bf{Proof of Theorem \ref{th1}}}:   Firstly, by definition of $\underline v, \underline u$, for some $C_\ast>0$, we have
\begin{align}\label{new-hu1-1}
\sum\limits_{\pm}\left(\|(\underline{v}-v_{\pm},\underline{u}-u_{\pm})\|_{L^2(\mathbb{R}_{\pm})}\right)
+\|((\underline{v})_x,(\underline{u})_x)\|_{H^1}<C_\ast(\delta_1+\delta_2).
\end{align}
Then, using \eqref{hu4.11}, we can derive that for some $C_{\ast\ast}>0$,
\begin{equation}\label{hu4.11-2}
\begin{aligned}
&\|(\underline{v}-\widetilde{v}_0,\underline{u}-\widetilde{u}_0)\|_{H^2}
+\sqrt{\tau}\|\widetilde{\Pi}_0\|_{H^2}\\
&\le
\sum\limits_{\pm}\left(\|(\underline{v}-v_{\pm},\underline{u}-u_{\pm})\|_{H^2(\mathbb{R}_{\pm})}\right)
+\sum\limits_{\pm}\left(\|(\widetilde v_0-v_{\pm},\widetilde u_0-u_{\pm}, \sqrt{\tau}\widetilde \Pi_0)\|_{H^2(\mathbb{R}_{\pm})}\right)
\\
&<C_{\ast\ast}(\sqrt{\delta_1}+\sqrt{\delta_2}).
\end{aligned}
\end{equation}
Noting that $\delta_1, \delta_2\in(0, \delta_0)$ and using the smallness of $\delta_0$, we get
\begin{equation}\label{36.1}
\frac{\frac{\varepsilon_1}{2}-C_0\delta_0}{C_0+1}-C_{\ast\ast}(\sqrt{\delta_1}+\sqrt{\delta_2})
-C_{\ast}(\delta_1+\delta_2)>0.
\end{equation}
Then, based on the above positive constants, we define $\varepsilon_0$:
\[
\varepsilon_0:=\varepsilon_\ast-C_\ast(\delta_1+\delta_2),\qquad
\varepsilon_\ast:=\frac{\frac{\varepsilon_1}{2}-C_0\delta_0}{C_0+1}-C_{\ast\ast}(\sqrt{\delta_1}+\sqrt{\delta_2}),
\]
where we can choose $\varepsilon_0$ independent of $\delta_1, \delta_2$. For example, we can take $\varepsilon_0=\frac{\varepsilon_1}{4 (C_0+1)}$.

Now, by using the assumption \eqref{1.8} in Theorem \ref{th1} and \eqref{new-hu1-1}, we can get
\begin{equation}\label{hu4.11-3}
\begin{aligned}
&\|(v_0-\underline{v},u_0-\underline{u})\|_{H^2}
+\sqrt{\tau}\|\Pi_0\|_{H^2}\\
&\le\sum\limits_{\pm}\left(\|(v_0-v_{\pm},u_0-u_{\pm},\sqrt{\tau}\Pi_0)\|_{H^2(\mathbb{R}_{\pm})}\right)
+\sum\limits_{\pm}\left(\|(\underline{v}-v_{\pm},\underline{u}-u_{\pm})\|_{H^2(\mathbb{R}_{\pm})}\right)
\\
&<\varepsilon_0+C_{\ast}(\delta_1+\delta_2)=\varepsilon_\ast.
\end{aligned}
\end{equation}
Especially, by applying Sobolev's embedding theorem, we obtain
\[
\|v_0-\underline{v}\|_{L^\infty}\le C\varepsilon_\ast,
\]
and subsequently, using the smallness of $\varepsilon_\ast$, we can derive that
\begin{equation}\label{36.2}
\frac{v_-}{2}<v_0(x)<2v_+,\qquad \forall x\in\mathbb{R}.
\end{equation}
Applying the smallness of $\delta_1, \delta_2$, we have
\begin{equation}\label{36.3}
0<\varepsilon_\ast<\frac{\varepsilon_1}{2}.
\end{equation}
Therefore, according Theorem \ref{th3.1} with \eqref{hu4.11-3}, \eqref{36.2} and \eqref{36.3}, we obtain that the system \eqref{1.3} with initial condition \eqref{1.4} has a unique solution $(v, u, \Pi)$ on $[0,T_0]$ such that
\begin{equation}\label{36.9}
\begin{aligned}
(v-\underline{v}, u-\underline{u}, \Pi)\in C([0,T_0];H^2),
\end{aligned}
\end{equation}
\begin{equation}\label{36.7}
\|v-\underline{v}\|_{L^{\infty}(0,T_0;H^2)}+\|u-\underline{u}\|_{L^{\infty}(0,T_0;H^2)}
+\sqrt{\tau}\|\Pi\|_{L^{\infty}(0,T_0;H^2)}\leq \frac{\varepsilon_1}{2}
\end{equation}
and
\begin{equation}\label{36.5}
\frac{v_-}{3}\leq v(t,x)\leq 3v_+,\qquad \forall(t,x)\in[0,T_0]\times\mathbb{R}.
\end{equation}
In addition, using the definition of $(\widetilde{v}, \widetilde{u}, \widetilde{\Pi})$ and Lemma \ref{le2.1}, we have
\[
\begin{aligned}
&\int_0^\infty |\widetilde{v}_2(x-\sigma_2 t-X_2(t))-v_+|^2\dif x\\
&\le \int_0^\infty |\widetilde{v}_2(x-\sigma_2 t)-v_+|^2\dif x
+\int_{-X_2(t)}^0 |\widetilde{v}_2(x-\sigma_2 t)-v_+|^2\dif x\\
&\le C\delta_2(1+|X_2(t)|).
\end{aligned}
\]
Similarly, we can get
$$
\|\widetilde{v}_2-v_m\|_{L^2(\mathbb{R}_-)}^2\le C\delta_2(1+|X_2(t)|),
$$
and
$$
 \|\widetilde{v}_1-v_-\|_{L^2(\mathbb{R}_-)}^2\le C\delta_1(1+|X_1(t)|), \, \|\widetilde{v}_1-v_m\|_{L^2(\mathbb{R}_+)}^2\le C\delta_1(1+|X_1(t)|).
$$
Hence, combining the above estimates and using Lemma \ref{le2.1}, one get
\[
\begin{aligned}
&\|(\underline{v}-\widetilde{v},\underline{u}-\widetilde{u})\|_{H^2}
+\sqrt{\tau}\|\widetilde{\Pi}\|_{H^2}\\
&\le
\sum\limits_{\pm}\left(\|(\underline{v}-v_{\pm},\underline{u}-u_{\pm})\|_{H^2(\mathbb{R}_{\pm})}\right)
+\sum\limits_{\pm}\left(\|(\widetilde v-v_{\pm},\widetilde u-u_{\pm}, \sqrt{\tau}\widetilde \Pi)\|_{H^2(\mathbb{R}_{\pm})}\right)
\\
&<C\sqrt{\delta_1}(1+\sqrt{|X_1(t)|})+C\sqrt{\delta_2}(1+\sqrt{|X_2(t)|}).
\end{aligned}
\]
Then, using Lemma \ref{le3.10}, we can get
\begin{equation}\label{36.4}
\|(\underline{v}-\widetilde{v},\underline{u}-\widetilde{u})\|_{H^2}
+\sqrt{\tau}\|\widetilde{\Pi}\|_{H^2}<C\sqrt{\delta_0}(1+\sqrt{t}).
\end{equation}
Next, choosing $0<T_1<T_0$ small enough such that $C\sqrt{\delta_0}(1+\sqrt{t})<\frac{\varepsilon_1}{2}$, we have
\begin{equation}\label{36.6b}
\|(\underline{v}-\widetilde{v},\underline{u}-\widetilde{u})\|_{L^\infty(0, T_1; H^2)}
+\sqrt{\tau}\|\widetilde{\Pi}\|_{L^\infty(0, T_1; H^2)}<\frac{\varepsilon_1}{2}.
\end{equation}
Combining \eqref{36.7} and \eqref{36.6b}, we obtain that
\begin{equation}\label{36.6}
\|(v-\widetilde{v},u-\widetilde{u})\|_{L^\infty(0, T_1; H^2)}
+\sqrt{\tau}\|\Pi-\widetilde{\Pi}\|_{L^\infty(0, T_1; H^2)}<\varepsilon_1.
\end{equation}
Additionally, since the shifts $X_i(t)$ are Lipschitz continuous and using \eqref{36.6}, we can get
\begin{equation}\label{36.9}
\begin{aligned}
(v-\widetilde v, u-\widetilde u, \Pi-\widetilde \Pi)\in C([0,T_1];H^2),
\end{aligned}
\end{equation}
Now, we consider the maximal existence time defined as follows
\[
T_m:=\sup\left\{t>0\Big|\sup\limits_{[0, t]}\left(\|(v-\widetilde{v},u-\widetilde{u})\|_{H^2}
+\sqrt{\tau}\|\Pi-\widetilde{\Pi}\|_{H^2}\right)\le\varepsilon_1\right\}.
\]
If $T_m<\infty$, according the continuation argument, we can obtain that
\begin{equation}\label{36.10}
\sup\limits_{[0, T_m]}\left(\|(v-\widetilde{v},u-\widetilde{u})\|_{H^2}
+\sqrt{\tau}\|\Pi-\widetilde{\Pi}\|_{H^2}\right)=\varepsilon_1.
\end{equation}
However, by \eqref{hu4.11-2}, \eqref{36.1} and \eqref{hu4.11-3}, we can get
\begin{equation}\label{36.11}
\|(v_0-\widetilde{v}_0,u_0-\widetilde{u}_0)\|_{H^2}
+\sqrt{\tau}\|\Pi_0-\widetilde{\Pi}_0\|_{H^2}\le \frac{\frac{\varepsilon_1}{2}-C_0\delta_0}{C_0+1}.
\end{equation}
Next, based on the Proposition \ref{p1} and smallness of $\delta_0$, it can be deduced that
\begin{equation}\label{36.100}
\begin{aligned}
\sup\limits_{[0, T_m]}\left(\|(v-\widetilde{v},u-\widetilde{u})\|_{H^2}
+\sqrt{\tau}\|\Pi-\widetilde{\Pi}\|_{H^2}\right)&\le C_0\frac{\frac{\varepsilon_1}{2}-C_0\delta_0}{C_0+1}+C_0\delta_0\\
&\le \frac{\varepsilon_1}{2},
\end{aligned}
\end{equation}
which contradicts \eqref{36.10}.\\
Hence, we conclude that $T_m=\infty$ and together with Proposition \ref{p1}, we can derive that
\begin{equation}\label{x3.6}
\begin{aligned}
&\sup\limits_{t\in[0,\infty)}\left(\|v-\widetilde{v}\|^2_{H^2}
+\|u-\widetilde{u}\|^2_{H^2}+\tau\|\Pi-\widetilde{\Pi}\|^2_{H^2}\right)
+\int_0^\infty\sum\limits_{i=1}\limits^2\delta_i|\dot{X}_i|^2\dif t\\
&\qquad\qquad+\int_0^\infty\left(G^s(U)
+\|\left((v-\widetilde{v})_{x}, (u-\widetilde{u})_{x}\right)\|^2_{H^1}+\|\Pi-\widetilde{\Pi}\|^2_{H^2}\right)\dif t\\
&\leq C_0\left(\|v_0-\widetilde{v}_0(\cdot)\|^2_{H^2}+\|u_0-\widetilde{u}_0(\cdot)\|^2_{H^2}
+\tau\|\Pi_0-\widetilde{\Pi}_0(\cdot)\|^2_{H^2}\right)+C_0\delta_0.
\end{aligned}
\end{equation}

In addition, using system \eqref{4.1} below and \eqref{x3.6}, we can get
$$
\int_0^{\infty}\left\|\partial_{x}(v-\widetilde{v}, u-\widetilde{u}, \Pi-\widetilde{\Pi})\right\|_{L^2}^2\dif t \le C
$$
and
$$\int_0^{\infty} \Big|\frac{\dif}{\dif t}\left\|\partial_{x}(v-\widetilde{v}, u-\widetilde{u}, \sqrt{\tau}(\Pi-\widetilde{\Pi}))\right\|_{L^2}^2 \Big |\dif t \le C.$$
Thus, combining the interpolation inequality, we get
\[
\lim\limits_{t\rightarrow\infty}\left\|(v-\widetilde{v}, u-\widetilde{u}, \sqrt{\tau}(\Pi-\widetilde{\Pi}))\right\|_{L^{\infty}}=0.
\]
Furthermore, using the definition of shifts $X_1(t), X_2(t)$ (see \eqref{2.10}), we have
\begin{equation}\label{3.11}
|\dot{X}_1(t)|+|\dot{X}_2(t)|\leq C\left\|v-\widetilde{v}\right\|_{L^{\infty}}\rightarrow0\quad as \quad t\rightarrow\infty.
\end{equation}

On the other hand, using  \eqref{x3.6} and Sobolev's embedding theorem, we have
\[
\|v-\widetilde v\|_{L^{\infty}(\mathbb{R}_+\times \mathbb{R})}\le C_0(\varepsilon_1+\delta_0).
\]
Thus,  using \eqref{3.11} and the smallness of $\varepsilon_1$ and $\delta_0$, we get
\[
X_1(t)\le -\frac{\sigma_1}{2}t, \qquad X_2(t)\ge -\frac{\sigma_2}{2}t,\qquad t >0,
\]
or equivalently,
\begin{equation}\label{1225-3}
X_1(t)+\sigma_1t\le \frac{\sigma_1}{2}t,\qquad X_2(t)+\sigma_2t\ge \frac{\sigma_2}{2}t, \qquad t >0.
\end{equation}

Therefore,  the proof of Theorem \ref{th1} is finished.
\section{Proof of the Proposition \ref{p1}}
In this section, we establish the a priori estimates  of local solutions and thus give a proof of Proposition \ref{p1}. The constant $C$ (may different in different place) denotes a universal constant independent of $\tau$ and $T$.

Using \eqref{1.3} and \eqref{3.2}, we derive the equations for error term $(v-\widetilde{v},u-\widetilde{u},\Pi-\widetilde{\Pi})(t,x)$ as follows:
\begin{equation}\label{4.1}
\begin{cases}
(v-\widetilde{v})_t-\SUM i 1 2 \dot{X}_i \cdot (\widetilde{v}_i)^{X_i}_x-(u-\widetilde{u})_x=0,\\
(u-\widetilde{u})_t-\SUM i 1 2 \dot{X}_i \cdot (\widetilde{u}_i)^{X_i}_x+\left(p(v)-p(\widetilde{v})\right)_x
=(\Pi-\widetilde{\Pi})_x-F_1,\\
\tau(\Pi-\widetilde{\Pi})_t-\tau\SUM i 1 2\dot{X}_i \cdot (\widetilde{\Pi}_i)^{X_i}_x+v\Pi-\widetilde{v}\widetilde{\Pi}
=\mu(u-\widetilde{u})_x-F_2,
\end{cases}
\end{equation}
where $F_1,F_2$ are defined in \eqref{3.3}.

To prove Proposition \ref{p1}, we need to do  the lower-order, higher-order and dissipation estimates, respectively.

Firstly, let $U(t, x):=(v, u, \Pi)^T(t, x)$ and $\widetilde{U}(t, x):=(\widetilde{v}, \widetilde{u}, \widetilde{\Pi})^T(t, x)$ are the solution of system \eqref{1.3} and \eqref{3.2}, respectively. Then, we define a relative entropy quantity  as
\begin{equation}\label{4.2}
\eta(U|\widetilde{U})=\frac{|u-\widetilde{u}|^2}{2}+H(v|\widetilde{v})+\frac{\tau|\Pi-\widetilde{\Pi}|^2}{2\mu},
\end{equation}
where $H(v|\widetilde{v})=H(v)-H(\widetilde{v})-H^{\prime}(\widetilde{v})(v-\widetilde{v})$ and potential energy $H(v):=v^{-\gamma+1}/(\gamma-1)$. In addition, for the pressure function $p(v)=v^{-\gamma}$, denote by $p(v|\tilde v)=p(v)-p(\widetilde{v})-p^{\prime}(\widetilde{v})(v-\widetilde{v})$. Then, we have the following Lemma, see \cite{WY,SMJ}.
\begin{lemma}\label{le4.1}
For given constants $\gamma>1$, and $v_->0$, there exist constants $C,\delta_{\ast}>0$, such that the following holds true.
\begin{itemize}
\item[1)] For any $v,w$ such that $0<w<2v_-,0<v<3v_-$,
\[
|v-w|^2\leq CH(v|w),
\]
\[
|v-w|^2\leq Cp(v|w).
\]
\item[2)] For any $v,w>v_-/2$,
\[
|p(v)-p(w)|\leq C|v-w|.
\]
\item[3)] For any $0<\delta<\delta_{\ast}$, and for any $(v,w)\in \mathbb{R}^2_+$ satisfying $|p(v)-p(w)|<\delta$, and $|p(w)-p(v_-)|<\delta$, the following holds true:
\[
p(v|w)\leq\left(\frac{\gamma+1}{2\gamma}\frac{1}{p(w)}+C\delta\right)|p(v)-p(w)|^2,
\]
\[
H(v|w)\geq\frac{p(w)^{-\frac{1}{\gamma}-1}}{2\gamma}|p(v)-p(w)|^2-\frac{1+\gamma}{3\gamma^2}p(w)^{-\frac{1}{\gamma}-2}(p(v)-p(w))^3,
\]
\[
H(v|w)\leq\left(\frac{p(w)^{-\frac{1}{\gamma}-1}}{2\gamma}+C\delta\right)|p(v)-p(w)|^2.
\]

\end{itemize}
\end{lemma}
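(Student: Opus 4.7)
The proof is a direct Taylor-expansion computation based on the explicit formulas $H(v) = v^{1-\gamma}/(\gamma-1)$ and $p(v) = v^{-\gamma}$, which give $H'(v) = -p(v)$, $H''(v) = \gamma v^{-\gamma-1}$, $p'(v) = -\gamma v^{-\gamma-1}$, and $p''(v) = \gamma(\gamma+1)v^{-\gamma-2}$. A key observation is that $H''$ and $p''$ are both positive and monotone decreasing on $(0,\infty)$.

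For part (1), I would use Taylor's theorem with integral remainder,
\begin{equation*}
H(v|w) = \int_w^v H''(s)(v-s)\,\dif s, \qquad p(v|w) = \int_w^v p''(s)(v-s)\,\dif s,
\end{equation*}
and note that on the relevant range $s \in [0,3v_-]$, $H''(s)\ge H''(3v_-)>0$ and $p''(s)\ge p''(3v_-)>0$ by monotonicity. Hence both $H(v|w)$ and $p(v|w)$ dominate $c(v-w)^2$ with a constant depending only on $v_-$ and $\gamma$, regardless of the orientation of the integral. Part (2) is immediate from the mean value theorem: $|p(v)-p(w)|\le \sup_{s\ge v_-/2}|p'(s)|\,|v-w|$, and $|p'(s)|=\gamma s^{-\gamma-1}$ is bounded above on $[v_-/2,\infty)$.

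Part (3) is where the substantive work lies. The hypotheses $|p(w)-p(v_-)|<\delta$ and $|p(v)-p(w)|<\delta$ keep $v,w$ in a fixed compact subinterval of $(0,\infty)$, bounded away from both $0$ and $\infty$, so all derivatives are uniformly bounded. The plan is to change variables to $P:=p(v)$, $W:=p(w)$, write $v=\phi(P)$ with $\phi(x):=x^{-1/\gamma}$, and convert Taylor expansions in $v-w$ into expansions in $P-W$. Computing $\phi'(W)=-\tfrac{1}{\gamma}W^{-1/\gamma-1}$ and $\phi''(W)=\tfrac{\gamma+1}{\gamma^2}W^{-1/\gamma-2}$ gives
\begin{equation*}
v - w = \phi'(W)(P-W) + \tfrac{1}{2}\phi''(W)(P-W)^2 + O((P-W)^3).
\end{equation*}
Substituting into the Taylor expansions of $p(v|w)$ and $H(v|w)$ about $v=w$, the leading $(v-w)^2$ coefficients transform algebraically into $\tfrac{\gamma+1}{2\gamma\, p(w)}$ and $\tfrac{1}{2\gamma}p(w)^{-1/\gamma-1}$ respectively. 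All remaining terms are $O((P-W)^3)$ and, since $|P-W|<\delta$, can be absorbed as $c\delta(P-W)^2$, yielding the upper bounds on $p(v|w)$ and $H(v|w)$.

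For the sharper lower bound on $H(v|w)$ with an explicit cubic term, I would carry the expansion one order further. The cubic-in-$(P-W)$ contribution collects two pieces: one from $\tfrac{H'''(w)}{6}(v-w)^3$ (which contributes $\tfrac{1}{6}H'''(w)\phi'(W)^3$) and one from the cross term in expanding $(v-w)^2$ to third order (which contributes $\tfrac{1}{2}H''(w)\phi'(W)\phi''(W)$). A short computation shows these combine to exactly $-\tfrac{\gamma+1}{3\gamma^2}p(w)^{-1/\gamma-2}$, with fourth-order remainder of size $c\delta(P-W)^2$ absorbed via a small perturbation of the coefficients. The only real obstacle is the careful algebraic bookkeeping in this last matching step; there is no analytic difficulty once the change of variables $P=p(v)$ is made and the compactness of the $(v,w)$ region is exploited.
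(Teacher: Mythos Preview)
The paper does not give its own proof of this lemma; it simply cites \cite{WY,SMJ}. Your Taylor-expansion approach via the change of variable $P=p(v)$, $W=p(w)$ with $v=\phi(P)=P^{-1/\gamma}$ is precisely the standard argument used in those references, and your computations of the leading coefficients $\tfrac{\gamma+1}{2\gamma p(w)}$, $\tfrac{1}{2\gamma}p(w)^{-1/\gamma-1}$, and $-\tfrac{\gamma+1}{3\gamma^2}p(w)^{-1/\gamma-2}$ are correct.

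One minor point deserves tightening. For the sharp lower bound on $H(v|w)$ in part~(3), the statement carries \emph{exact} quadratic and cubic coefficients, with no $c\delta$ slack. Your closing remark that the fourth-order remainder is ``absorbed via a small perturbation of the coefficients'' would only deliver
\[
H(v|w)\ge\Bigl(\tfrac{1}{2\gamma}p(w)^{-1/\gamma-1}-c\delta\Bigr)(P-W)^2-\tfrac{\gamma+1}{3\gamma^2}p(w)^{-1/\gamma-2}(P-W)^3,
\]
which is slightly weaker than what is claimed. The fix is immediate within your framework: set $f(P):=H(\phi(P)|\phi(W))=g(P)-g(W)+W(\phi(P)-\phi(W))$ with $g(x)=\tfrac{1}{\gamma-1}x^{1-1/\gamma}$, and let $q(P)$ denote the stated quadratic-plus-cubic in $(P-W)$. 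Since $f$ and $q$ agree to third order at $P=W$, the Lagrange remainder gives $f(P)-q(P)=\tfrac{1}{24}f^{(4)}(\xi)(P-W)^4$ for some $\xi$ between $P$ and $W$. A direct computation shows
\[
f^{(4)}(\xi)=g^{(4)}(\xi)+W\phi^{(4)}(\xi)=\tfrac{(\gamma+1)(2\gamma+1)}{\gamma^4}\,\xi^{-1/\gamma-3}\Bigl(\tfrac{(3\gamma+1)W}{\gamma\,\xi}-1\Bigr),
\]
which is strictly positive whenever $\xi<\tfrac{3\gamma+1}{\gamma}W$; this holds for all $\xi$ between $P$ and $W$ once $|P-W|<\delta<\delta_*$ with $\delta_*$ small relative to $p(v_-)$. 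Hence $f\ge q$, i.e.\ the lower bound holds with the exact coefficients.
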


\subsection{$L^2$ energy estimates}
In the following lemmas, we get the $L^2$ estimates.
\begin{lemma}\label{le4.2}
Under the hypotheses of Proposition \ref{p1}, there exist constants $C, C_1>0$ (independent of $\tau$ and $T$) such that for all $t\in(0,T]$,
\begin{equation}\label{26.1}
\begin{aligned}
&\int_\mathbb{R}\eta(U(t,x)|\widetilde{U}(t,x))\dif x+\sum\limits_{i=1}\limits^2\frac{\delta_i}{4M}\int_0^t|\dot{X}_i|^2\dif t+C_1(1-C(\delta_0+\varepsilon_1))\int_0^tG^s(U)\dif t\\
&\quad+(1-\kappa-C(\delta^{\frac{1}{2}}_1+\delta^{\frac{1}{2}}_2))\int_0^tG(U)\dif t\\
 &\leq  C\int_\mathbb{R}\eta(U(0,x)|\widetilde{U}(0,x))\dif x
+\frac{3\mu}{4v_m}\frac{1+\kappa+C\varepsilon_1}{1-C\varepsilon_1}\int_0^t\int_\mathbb{R}|p^\prime(v)||\partial_x(v-\widetilde{v})|^2\dif x\dif t+C\delta_0,
\end{aligned}
\end{equation}
where $\kappa$ is small constant to be determined later,
\begin{align}\label{26.1b}
 G(U):=\int_\mathbb{R}\frac{v}{\mu}|\Pi-\widetilde{\Pi}|^2\dif x,
\end{align}
and $G^{s}(U)$ is given in \eqref{3.7}.
\end{lemma}
Before we prove Lemma \ref{le4.2}, we need some auxiliary lemmas. Firstly, using Lemma \ref{le2.1}, we have the following estimates.

Next, using the a priori assumption \eqref{3.5}, Sobolev's embedding theorem and  \eqref{2.10}, we get  that
\[
|\dot X_i(t)|\le C\|v-\widetilde v\|_{L^{\infty}(\mathbb{R}_+\times \mathbb{R})}\le C \varepsilon_1.
\]
Thus,  for sufficiently small $\varepsilon_1$, we can derive that
\begin{equation}\label{3.10-1}
X_1(t)+\sigma_1t\le \frac{\sigma_1}{2}t,\qquad X_2(t)+\sigma_2t\ge \frac{\sigma_2}{2}t, \qquad t >0.
\end{equation}
\begin{lemma}\label{le4.3}
Assume \eqref{3.10-1}. Given $v_+>0$, there exist positive constants $\delta_0,C$ such that for any $\delta_1,\delta_2\in(0,\delta_0)$, the following estimates hold. For each $i=1,2$,
\begin{align*}
&|(\widetilde{v}_i)_x^{X_i}||\widetilde{v}-\widetilde{v}_i^{X_i}|\le C\delta_i\delta_1\delta_2e^{-C\min\{\delta_1,\delta_2\}t},\quad t>0, \quad x\in \mathbb{R},\\
&\int_\mathbb{R}|(\widetilde{v}_i)_x^{X_i}||\widetilde{v}-\widetilde{v}_i^{X_i}|\dif x\le C\delta_1\delta_2e^{-C\min\{\delta_1,\delta_2\}t},\quad t>0,\\
&\int_\mathbb{R}|(\widetilde{v}_1)_x^{X_1}||(\widetilde{v}_2)_x^{X_2}|\dif x\le
C\delta_1\delta_2e^{-C\min\{\delta_1,\delta_2\}t},\quad t>0.
\end{align*}
\end{lemma}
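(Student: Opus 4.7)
\textbf{Proof plan for Lemma \ref{le4.3}.} The argument combines the exponential tail bounds from Lemma \ref{le2.1} with the spatial separation provided by \eqref{1.12}; no new ideas are needed beyond careful bookkeeping of the exponents in $\delta_1,\delta_2,t$. Writing $\xi_i := x-\sigma_i t-X_i(t)$, the hypothesis \eqref{1.12} gives $X_1 \le -\sigma_1 t/2$ and $X_2 \ge -\sigma_2 t/2$, hence
\[
d(t) := \xi_1-\xi_2 = (\sigma_2-\sigma_1)t+(X_2-X_1) \ge \frac{\sigma_2-\sigma_1}{2}t =: \eta(t), \quad t>0.
\]
Since $\sigma_2>0>\sigma_1$, $\eta(t)\ge Ct$, so at \emph{every} $x$ at least one of $|\xi_1|,|\xi_2|$ is bounded below by $\eta(t)/2$. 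Moreover, in the region where the derivative factor $(\widetilde v_1)_x^{X_1}$ is not negligible (i.e.\ $|\xi_1|$ not large) we automatically have $\xi_2 = \xi_1-d \lesssim -\eta(t) < 0$, which places us in the decaying tail of $\widetilde v_2^{X_2}-v_m$, and symmetrically for $i=2$.

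\textbf{Pointwise bound.} Because $\widetilde v-\widetilde v_i^{X_i} = \widetilde v_j^{X_j}-v_m$ with $j\neq i$, Lemma \ref{le2.1} gives $|(\widetilde v_i)_x^{X_i}|\le C\delta_i^2 e^{-C\delta_i|\xi_i|}$ globally, while $|\widetilde v_j^{X_j}-v_m|\le C\delta_j e^{-C\delta_j|\xi_j|}$ on the tail side $(-1)^j\xi_j<0$ and $\le C\delta_j$ on the plateau side. Take $i=1$, $j=2$. On the plateau $\xi_2\ge 0$, we have $\xi_1=\xi_2+d\ge \eta$, so the derivative factor already carries $e^{-C\delta_1\eta}$. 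On the tail $\xi_2<0$, either $|\xi_2|\ge \eta/2$ (decay from $\widetilde v_2-v_m$) or $|\xi_1|\ge \eta/2$ (decay from the derivative). In every case the product is at most $C\delta_1^2\delta_2\,e^{-C\min(\delta_1,\delta_2)t}$, which is the first inequality for $i=1$; the case $i=2$ is completely symmetric.

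\textbf{Integral bounds.} For the second inequality, change variables to $\xi_1$ (so $\xi_2=\xi_1-d$ with $d$ constant in $\xi_1$) and split at $\xi_1=d/2$. On $\{\xi_1\ge d/2\}$ bound $|\widetilde v_2^{X_2}-v_m|\le C\delta_2$ and integrate $\delta_1^2 e^{-C\delta_1\xi_1}$ to pick up $C\delta_1\delta_2 e^{-C\delta_1 d/2}$. On $\{\xi_1<d/2\}$ we have $|\xi_2|\ge d/2$ and $\xi_2<0$, so $|\widetilde v_2^{X_2}-v_m|\le C\delta_2 e^{-C\delta_2 d/2}$, and $\int e^{-C\delta_1|\xi_1|}\,\dif\xi_1 \le C/\delta_1$ completes the estimate. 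Summing and using $d\ge \eta(t)\ge Ct$ gives the stated bound. For the third inequality, the elementary bound $\delta_1|\xi_1|+\delta_2|\xi_2|\ge \min(\delta_1,\delta_2)\,d$ yields
\[
e^{-C\delta_1|\xi_1|}e^{-C\delta_2|\xi_2|}\le e^{-C\min(\delta_1,\delta_2)\,d/2}\cdot e^{-C\delta_1|\xi_1|/2}e^{-C\delta_2|\xi_2|/2},
\]
and the second factor integrates in $\xi_1$ to at most $C/\max(\delta_1,\delta_2)$; multiplying by $\delta_1^2\delta_2^2$ gives $\min(\delta_1,\delta_2)\cdot\delta_1\delta_2\le \delta_0\delta_1\delta_2$ times the required exponential. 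The only mild subtlety worth flagging is the plateau region where $\widetilde v_j-v_m$ does not decay (since $\widetilde v_j\to v_\pm\neq v_m$ there); this is handled by trading the missing decay against the derivative factor via the separation $d\ge \eta(t)\sim t$ supplied by \eqref{1.12}. The rest is exponential calculus.
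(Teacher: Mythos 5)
Your argument is correct and is essentially the same computation the paper relies on: the paper gives no proof of Lemma \ref{le4.3} itself but defers to Appendix A of \cite{SMJ}, where the bounds are likewise obtained by combining the exponential tail estimates of Lemma \ref{le2.1} with the shift separation \eqref{1.12} (i.e.\ $\xi_1-\xi_2\ge \frac{\sigma_2-\sigma_1}{2}t$) and splitting into the plateau/tail regions exactly as you do. Your bookkeeping of the powers of $\delta_1,\delta_2$ and of the exponential rates checks out, so the proposal fills in correctly what the paper omits.
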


The proof of the above lemma which is based on Lemma \ref{le2.1} is given in \cite{SMJ}. For completeness, we give a proof of this lemma in the Appendix B.

\begin{lemma}\label{ctf9.151}
Assume \eqref{3.10-1}. Let $\phi_i$ be the functions defined in \eqref{ctf9.15}. Given $v_+>0$, there exist positive constants $\delta_0,C$ such that for any $\delta_1,\delta_2\in(0,\delta_0)$, the following estimates hold:
\begin{align*}
&\phi_2|(\widetilde{v}_1)_x^{X_1}|\le C\delta_1^2e^{-C\delta_1t},\quad
\phi_1|(\widetilde{v}_2)_x^{X_2}|\le C\delta_2^2e^{-C\delta_2t}
\quad t>0, \quad x\in \mathbb{R},\\
&\int_\mathbb{R}\phi_2|(\widetilde{v}_1)_x^{X_1}|\dif x\le C\delta_1e^{-C\delta_1t},\quad
\int_\mathbb{R}\phi_1|(\widetilde{v}_2)_x^{X_2}|\dif x\le C\delta_2e^{-C\delta_2t}
\quad t>0.
\end{align*}
\end{lemma}
By Lemma \ref{le2.1} and the definition of $\phi_i$, this lemma follows immediately. we omit the proof and refer to Appendix B of \cite{SMJ} for details.

Then, following the Lemma 4.3 in \cite{HG} (see also Lemma 4.3 in \cite{WY}, Lemma 5.2 in \cite{KV9} and Lemma 4 in \cite{V27}), we get the estimates of the relative entropy weighted by $a(t,x)$ with shift $X_i$.
\begin{lemma}\label{le4.4}
Let $a(t,x)$ be the weight function defined in \eqref{2.11} and $X_1,X_2$ be any Lipschitz continuous function. Let $U$ and $\widetilde{U}$ are the solution of system \eqref{1.3} and \eqref{3.2}. Then, we have
\begin{equation}\label{4.3}
\frac{\dif}{\dif t}\int_\mathbb{R}a(t,x)\eta(U(t,x)|\widetilde{U}(t,x))\dif x
=\sum\limits_{i=1}\limits^2\dot{X}_i(t)Y_i(U)+J^{bad}(U)-J^{good}(U),
\end{equation}
where
\begin{align*}
Y_i(U):=&-\int_\mathbb{R}(a_i)_x^{X_i}\eta(U|\widetilde{U})\dif x+
\int_\mathbb{R}a(\widetilde{u}_i)^{X_i}_x(u-\widetilde{u})\dif x
    -\int_\mathbb{R}ap^{\prime}(\widetilde{v})(\widetilde{v}_i)^{X_i}_x(v-\widetilde{v})\dif x\\
   & +\int_\mathbb{R}a\frac{\tau}{\mu}(\widetilde{\Pi}_i)^{X_i}_x(\Pi-\widetilde{\Pi})\dif x,
\end{align*}
\begin{align*}
   J^{bad}(U):=&\sum\limits_{i=1}\limits^2
   \left(\frac{1}{2\sigma_i}\int_\mathbb{R}(a_i)_{x}^{X_i}(p(v)-p(\widetilde{v}))^2\dif x
   +\sigma_i\int_\mathbb{R}a(\widetilde{v}_i)^{X_i}_{x}p(v|\widetilde{v})\dif x\right) \\
 &+\sum\limits_{i=1}\limits^2
   \left(\frac{1}{2\sigma_i}\int_\mathbb{R}(a_i)_{x}^{X_i}(\Pi-\widetilde{\Pi})^2\dif x
   -\frac{1}{2\sigma_i}\int_\mathbb{R}(a_i)^{X_i}_{x}(p(v)-p(\widetilde{v}))(\Pi-\widetilde{\Pi})\dif x\right)\\
      &-\int_\mathbb{R}a\frac{\widetilde{\Pi}}{\mu}(\Pi-\widetilde{\Pi})(v-\widetilde{v})\dif x
    -\int_\mathbb{R}a(u-\widetilde{u})F_1\dif x
    -\int_\mathbb{R}a\frac{(\Pi-\widetilde{\Pi})}{\mu}F_2\dif x,
\end{align*}
\begin{align*}
J^{good}(U):=&\sum\limits_{i=1}\limits^2
   \left(\frac{\sigma_i}{2}\int_\mathbb{R}(a_i)_{x}^{X_i}
\left(u-\widetilde{u}-\frac{p(v)-p(\widetilde{v})}{\sigma_i}+\frac{\Pi-\widetilde{\Pi}}{\sigma_i}\right)^2\dif x
   +\sigma_i\int_\mathbb{R}(a_i)^{X_i}_{x}H(v|\widetilde{v})\dif x\right)\\
&+\sum\limits_{i=1}\limits^2
   \left(\sigma_i\int_\mathbb{R}(a_i)^{X_i}_{x}\frac{\tau}{2\mu}(\Pi-\widetilde{\Pi})^2\dif x\right)
     +\int_\mathbb{R}a\frac{v}{\mu}(\Pi-\widetilde{\Pi})^2\dif x.
\end{align*}
\end{lemma}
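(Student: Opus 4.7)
The plan is to prove the identity by directly computing $\frac{d}{dt}\int a\eta(U|\widetilde U)\,dx$, substituting the error equations \eqref{4.1} and the shifted-profile equations \eqref{3.2} to eliminate every time derivative, and then reorganizing via integration by parts and a square completion into the three advertised groups. The computation is a direct adaptation of the ones carried out in \cite[Lemma 4.3]{HG}, \cite[Lemma 4.3]{WY}, and \cite[Lemma 5.2]{KV9}; the only novelty is the bookkeeping associated with the relaxed stress variable $\Pi$.

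I would split $\frac{d}{dt}\int a\eta\,dx = \int a_t\eta\,dx + \int a\,\partial_t\eta\,dx$. From \eqref{2.11} one has $a_t = -\sum_{i=1}^2(\sigma_i+\dot X_i)(a_i)_x^{X_i}$, so the $\dot X_i$ piece of $\int a_t\eta\,dx$ contributes the first term $-\int(a_i)_x^{X_i}\eta\,dx$ of $Y_i(U)$, while the $\sigma_i$ piece (after decomposing $\eta = \tfrac12(u-\widetilde u)^2 + H(v|\widetilde v) + \tfrac{\tau}{2\mu}(\Pi-\widetilde\Pi)^2$) yields the $\sigma_i\int(a_i)_x^{X_i} H(v|\widetilde v)$ and $\sigma_i\int(a_i)_x^{X_i}\tfrac{\tau}{2\mu}(\Pi-\widetilde\Pi)^2$ parts of $J^{good}$, plus a $-\tfrac{\sigma_i}{2}(a_i)_x^{X_i}|u-\widetilde u|^2$ piece to be recombined below.

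For $\partial_t\eta$, using $H'(v)=-p(v)$ and Taylor's formula one has $\partial_tH(v|\widetilde v) = -(p(v)-p(\widetilde v))(v-\widetilde v)_t - p(v|\widetilde v)\widetilde v_t$. Substituting the three equations of \eqref{4.1} and using $\widetilde v_t = -\sum_i(\sigma_i+\dot X_i)(\widetilde v_i)_x^{X_i}$ from \eqref{3.2}, all $\dot X_i$ contributions — together with the identity $p(v)-p(\widetilde v) = p(v|\widetilde v) + p'(\widetilde v)(v-\widetilde v)$ — assemble the remaining three terms of $Y_i(U)$, while the $\sigma_i$ part of $-p(v|\widetilde v)\widetilde v_t$ produces the $\sigma_i\int a(\widetilde v_i)_x^{X_i} p(v|\widetilde v)\,dx$ term of $J^{bad}$. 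Writing $v\Pi-\widetilde v\widetilde\Pi = v(\Pi-\widetilde\Pi)+\widetilde\Pi(v-\widetilde v)$ in the third equation generates both the dissipation $\int a(v/\mu)(\Pi-\widetilde\Pi)^2\,dx$ of $J^{good}$ and the bad term $-\int a(\widetilde\Pi/\mu)(\Pi-\widetilde\Pi)(v-\widetilde v)\,dx$; the forcings $F_1$, $F_2$ go directly into $J^{bad}$.

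What remains are the flux cross terms. Combining $-\int a(p(v)-p(\widetilde v))(u-\widetilde u)_x\,dx - \int a(u-\widetilde u)(p(v)-p(\widetilde v))_x\,dx$ (and the analogous pair for $\Pi-\widetilde\Pi$), integration by parts produces $\int a_x(u-\widetilde u)[(p(v)-p(\widetilde v))-(\Pi-\widetilde\Pi)]\,dx$, and together with the $-\tfrac{\sigma_i}{2}(a_i)_x^{X_i}|u-\widetilde u|^2$ piece from $a_t\eta$ a square completion gives
\[
-\sum_{i=1}^2\frac{\sigma_i}{2}(a_i)_x^{X_i}\Bigl(u-\widetilde u - \tfrac{p(v)-p(\widetilde v)}{\sigma_i} + \tfrac{\Pi-\widetilde\Pi}{\sigma_i}\Bigr)^2 + \sum_{i=1}^2\frac{1}{2\sigma_i}(a_i)_x^{X_i}\bigl[(p(v)-p(\widetilde v))-(\Pi-\widetilde\Pi)\bigr]^2,
\]
the first sum supplying the remaining $-J^{good}$ piece (positive by Remark \ref{re2.2}) and the second, after expansion, the $(p-p)^2$, $(\Pi-\widetilde\Pi)^2$, and cross $(p-p)(\Pi-\widetilde\Pi)$ contributions of $J^{bad}$. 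The main obstacle will be keeping the bookkeeping consistent across sign conventions (in particular $\sigma_1<0$ vs.\ $\sigma_2>0$) so that every fractional coefficient in the statement is recovered exactly, and verifying that the mixed $(\widetilde\Pi, v-\widetilde v)$ and $F_1,F_2$ residuals fall in line with the non-square terms of $J^{bad}$; since structurally identical identities have been derived in the cited works, the present computation follows the same template.
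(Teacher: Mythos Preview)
Your proposal is correct and follows precisely the approach the paper indicates: the paper does not give its own detailed proof of this lemma but defers to \cite[Lemma 4.3]{HG}, \cite[Lemma 4.3]{WY}, \cite[Lemma 5.2]{KV9} and \cite[Lemma 4]{V27}, and your sketch reproduces that standard computation (splitting into $\int a_t\eta$ and $\int a\,\partial_t\eta$, substituting \eqref{4.1}--\eqref{3.2}, integrating by parts, and completing the square) adapted to include the extra $\Pi$-variable. The bookkeeping you outline for the $Y_i$, $J^{bad}$, $J^{good}$ pieces matches the stated identity.
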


Next, for the terms $J^{bad}(U)$ and $J^{good}(U)$, we use the following notations:
\[
J^{bad}(U):=\sum\limits_{j=1}\limits^{7}B_j(U),\quad
J^{good}(U):=G(U)+\sum\limits_{j=1}\limits^{3}G_j(U),
\]
where
\[
B_1(U):=\sum\limits_{i=1}\limits^2
   \left(\frac{1}{2\sigma_i}\int_\mathbb{R}(a_i)_{x}^{X_i}(p(v)-p(\widetilde{v}))^2\dif x\right),\quad
  B_2(U):=\sum\limits_{i=1}\limits^2
   \left(\sigma_i\int_\mathbb{R}a(\widetilde{v}_i)^{X_i}_{x}p(v|\widetilde{v})\dif x\right),
\]
\[
B_3(U):=\sum\limits_{i=1}\limits^2
   \left(\frac{1}{2\sigma_i}\int_\mathbb{R}(a_i)_{x}^{X_i}(\Pi-\widetilde{\Pi})^2\dif x\right),
B_4(U):=-\sum\limits_{i=1}\limits^2
   \left(\frac{1}{2\sigma_i}\int_\mathbb{R}(a_i)^{X_i}_{x}(p(v)-p(\widetilde{v}))(\Pi-\widetilde{\Pi})\dif x\right),
\]
\[
B_5(U):=-\int_\mathbb{R}a\frac{\widetilde{\Pi}}{\mu}(\Pi-\widetilde{\Pi})(v-\widetilde{v})\dif x ,
B_6(U):=-\int_\mathbb{R}a(u-\widetilde{u})F_1\dif x,
B_7(U):=-\int_\mathbb{R}a\frac{(\Pi-\widetilde{\Pi})}{\mu}F_2\dif x,
\]
and
\[
G(U):=\int_\mathbb{R}a\frac{v}{\mu}(\Pi-\widetilde{\Pi})^2\dif x,
G_1(U):=\sum\limits_{i=1}\limits^2
   \left(\frac{\sigma_i}{2}\int_\mathbb{R}(a_i)_{x}^{X_i}
\left(u-\widetilde{u}-\frac{p(v)-p(\widetilde{v})}{\sigma_i}+\frac{\Pi-\widetilde{\Pi}}{\sigma_i}\right)^2\dif x\right),
\]
\[
G_2(U):=\sum\limits_{i=1}\limits^2\sigma_i\int_\mathbb{R}(a_i)^{X_i}_{x}H(v|\widetilde{v})\dif x,
G_3(U):=\sum\limits_{i=1}\limits^2
   \left(\sigma_i\int_\mathbb{R}(a_i)^{X_i}_{x}\frac{\tau}{2\mu}(\Pi-\widetilde{\Pi})^2\dif x\right).
\]
For each $Y_i(U)$, we have from \eqref{4.2} that
\begin{align*}
Y_i(U):=&-\int_\mathbb{R}(a_i)_x^{X_i}\left(\frac{|u-\widetilde{u}|^2}{2}+H(v|\widetilde{v})
+\frac{\tau|\Pi-\widetilde{\Pi}|^2}{2\mu}\right)\dif x+
\int_\mathbb{R}a(\widetilde{u}_i)^{X_i}_x(u-\widetilde{u})\dif x\\
    &-\int_\mathbb{R}ap^{\prime}(\widetilde{v})(\widetilde{v}_i)^{X_i}_x(v-\widetilde{v})\dif x
    +\int_\mathbb{R}a\frac{\tau}{\mu}(\widetilde{\Pi}_i)^{X_i}_x(\Pi-\widetilde{\Pi})\dif x.
\end{align*}
We rewrite the function $Y_i$ as follows:
\[
Y_i:=\sum\limits_{j=1}\limits^{8}Y_{ij},
\]
where
\[
Y_{i1}:=\int_\mathbb{R}\frac{a}{\sigma_i}(\widetilde{u}_i)^{X_i}_x(p(v)-p(\widetilde{v}))\dif x,\quad
Y_{i2}:=-\int_\mathbb{R}ap^{\prime}(\widetilde{v}_i^{X_i})(\widetilde{v}_i)^{X_i}_x(v-\widetilde{v})\dif x,
\]
\[
Y_{i3}:=\int_\mathbb{R}a(\widetilde{u}_i)^{X_i}_x\left(u-\widetilde{u}-\frac{p(v)-p(\widetilde{v})}{\sigma_i}\right)\dif x,\quad
Y_{i4}:=-\int_\mathbb{R}a\left(p^{\prime}(\widetilde{v})-p^{\prime}(\widetilde{v}_i^{X_i})\right)(\widetilde{v}_i)^{X_i}_x(v-\widetilde{v})\dif x,
\]
\[
Y_{i5}:=\int_\mathbb{R}a\frac{\tau}{\mu}(\widetilde{\Pi}_i)^{X_i}_x(\Pi-\widetilde{\Pi})\dif x,\quad
Y_{i6}:=-\int_\mathbb{R}(a_i)_x^{X_i}\frac{\tau|\Pi-\widetilde{\Pi}|^2}{2\mu}\dif x,
\]
\[
Y_{i7}:=-\frac{1}{2}\int_\mathbb{R}(a_i)_x^{X_i}\left(u-\widetilde{u}-\frac{p(v)-p(\widetilde{v})}{\sigma_i}\right)
\cdot\left(u-\widetilde{u}+\frac{p(v)-p(\widetilde{v})}{\sigma_i}\right)\dif x,
\]
\[
Y_{i8}:=-\int_\mathbb{R}(a_i)_x^{X_i}H(v|\widetilde{v})\dif x
-\frac{1}{2\sigma_i^2}\int_\mathbb{R}(a_i)_x^{X_i}(p(v)-p(\widetilde{v}))^2\dif x.
\]
Notice from \eqref{2.10} that
\[
\dot{X}_i=-\frac{M}{\delta_i}(Y_{i1}+Y_{i2}),
\]
which implies
\begin{equation}\label{4.4}
\dot{X}_i(t)Y_i(U)=\frac{\delta_i}{M}|\dot{X}_i|^2+\dot{X}_i\sum\limits_{j=3}\limits^8Y_{ij}.
\end{equation}

Next, we establish the following lemma. Our proof is inspired by the core idea of using cutoff functions, as introduced in \cite{SMJ}, to manage perturbations. Nonetheless, we have developed a refined set of estimates and a modified technical approach to suit our specific framework. The following self-contained proof is provided to clearly delineate our methodology and its departures from the source.

\begin{lemma}\label{le4.5}
There exists a constant $C>0$ (independent of $\tau$) such that
\begin{align*}
&-\sum\limits_{i=1}\limits^2\frac{\delta_i}{2M}|\dot{X}_i|^2+B_1+B_2-G_2-\frac{3}{4}D\\
&\le C\sum\limits_{i=1}\limits^2
\int_\mathbb{R}\left(-|(\widetilde{v}_i)^{X_i}_x||p(v)-p(\widetilde{v})|^2
+|(a_i)^{X_i}_x||p(v)-p(\widetilde{v})|^3
+|(a_i)^{X_i}_x||\widetilde{v}-\widetilde{v}_i^{X_i}||p(v)-p(\widetilde{v})|^2\right)\dif x\\
&\quad+C\left(\sum\limits_{i=1}\limits^2\delta_i^2e^{-C\delta_i t}+\frac{1}{ t^2}\right)\int_\mathbb{R}\eta(U|\widetilde{U})\dif x,
\end{align*}
where
\[D=\int_\mathbb{R}\frac{a\mu}{\gamma p(v)}|\partial_x(p(v)-p(\widetilde{v}))|^2\dif x.\]
\end{lemma}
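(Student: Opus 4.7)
The plan is to mimic the algebraic decomposition of Section 4.6 of \cite{SMJ}, specialized to the relaxed setting. Since we have no BD-type dissipation on $\partial_x(v-\widetilde v)$, I cannot follow their cutoff-based localization and must instead use the sharp Poincar\'e-type estimate of Kang--Vasseur on the full line; this is the new difficulty here. I would start by applying Lemma \ref{le4.3} and the exponential decay in Lemma \ref{le2.1} to reduce each of $B_1,B_2,G_2$ to a sum of single-shock integrals plus wave-interaction errors of order $C\delta_1\delta_2 e^{-c\min(\delta_1,\delta_2)t}$, which after integration in time contribute only $C\delta_0$ to the right-hand side of Proposition \ref{p1}. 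The relation $\eqref{2.2}_1$ then gives $(\widetilde u_i)_x=-\sigma_i(\widetilde v_i)_x$, and the Taylor expansion $v-\widetilde v=(p(v)-p(\widetilde v))/p'(\widetilde v)+O(|p(v)-p(\widetilde v)|^2)$ lets me rewrite $Y_{i1}+Y_{i2}$, up to cubic and weight-difference errors, as a linear functional in $p(v)-p(\widetilde v)$ weighted by $(\widetilde v_i)^{X_i}_x$, so that
\[
\frac{\delta_i}{2M}|\dot X_i|^2=\frac{M}{2\delta_i}(Y_{i1}+Y_{i2})^2
\]
becomes a sharp quadratic form in $p(v)-p(\widetilde v)$ against $(\widetilde v_i)^{X_i}_x$.

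I would then apply Lemma \ref{le4.1}(3) to expand $p(v|\widetilde v)$ (entering $B_2$) and $H(v|\widetilde v)$ (entering $G_2$) as quadratic forms in $p(v)-p(\widetilde v)$ plus controlled cubic remainders. Combined with the explicit identity $(a_i)_x=-\frac{\lambda_i}{\delta_i}(p(\widetilde v_i))_x$ in $B_1$, this reduces $B_1+B_2-G_2$ to a single quadratic functional
\[
\sum_{i=1}^{2}\int_{\mathbb R}\Phi_i(t,x)(p(v)-p(\widetilde v))^2\,\dif x
\]
where $\Phi_i$ is proportional to $(\widetilde v_i)^{X_i}_x$ with a prefactor determined by $p(v_m)$ and $p'(v_m)$, and the residual pieces arising from $|\widetilde v-\widetilde v_i^{X_i}|$ and from the cubic expansions of $H$ and $p$ are precisely the last two integrals on the right-hand side of the claim.

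The sharp calibration is now the classical Kang--Vasseur algebraic identity: the specific constant $M=\frac{5(\gamma+1)}{8\gamma p(v_m)}(-p'(v_m))^{3/2}$ is chosen so that Cauchy--Schwarz expansion of $(Y_{i1}+Y_{i2})^2$ against the weight $(\widetilde v_i)^{X_i}_x$ dominates the functional $\sum_i\int\Phi_i(\cdot)^2$ with a definite residual good negative term $-c\int|(\widetilde v_i)^{X_i}_x||p(v)-p(\widetilde v)|^2\,\dif x$, which supplies the first term on the right-hand side. Only a small residue of order $\delta_i$ then remains; it is controlled by the sharp Poincar\'e-type inequality (as in Lemma 5.2 of \cite{KV9}, also used in Lemma 4.3 of \cite{HG}), which trades the weighted $L^2$-norm $\int(\widetilde v_i)^{X_i}_x|p(v)-p(\widetilde v)|^2\,\dif x$ against $D=\int\frac{a}{\gamma p(v)}|\partial_x(p(v)-p(\widetilde v))|^2\,\dif x$, at the cost of at most $\frac{3}{2}D$.

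The main obstacle I expect is the exact constant tracking in the Cauchy--Schwarz step: the leading quadratic coefficient in $B_1+B_2-G_2-\frac{M}{2\delta_i}(Y_{i1}+Y_{i2})^2$ must come out strictly negative of definite size after expanding in $\delta_i$ using $\sigma_i^2=-p'(v_m)+O(\delta_i)$, and this is precisely where the specific value of $M$ is needed. A secondary issue, specific to the relaxed model, is to verify that the $\Pi-\widetilde\Pi$ contributions $B_3,B_4,B_5$ of Lemma \ref{le4.4} do not couple into the present quadratic form; since $B_1,B_2,G_2$ and $D$ depend only on $(v,\widetilde v)$, this separation is clean and those terms are deferred to subsequent estimates where they are absorbed by $G(U)$ and $G_3$.
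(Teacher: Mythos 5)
Your overall skeleton (reduce $B_1,B_2,G_2$ to quadratic forms in $p(v)-p(\widetilde v)$ via Lemma \ref{le4.1}, push the wave-interaction and weight-difference pieces into the last two integrals of the claim, use a weighted Poincar\'e-type inequality and the specific value of $M$) is the same as the paper's, which works in the normalized variables $y_i\in(0,1)$, $w_i$ of \eqref{4.5}--\eqref{4.6}. But the central balancing step is misassigned, and as stated it is false. After the cancellation of $B_1$ against the leading part of $G_2$ (both carry the small factor $\lambda_i$), the surviving bad term is the leading part of $B_2$, which in normalized variables is $\delta_i\alpha_m(1+O(\delta_0+\varepsilon_1))\int_0^1|w_i|^2\,\dif y_i$ with the full-size coefficient $\alpha_m$. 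The shift contribution is $-\frac{\delta_i}{2M}|\dot X_i|^2\approx -c\,\delta_i\bigl(\int_0^1 w_i\,\dif y_i\bigr)^2$, i.e.\ it only sees the square of the \emph{mean} of $w_i$; by Cauchy--Schwarz $\bigl(\int_0^1 w_i\,\dif y_i\bigr)^2\le\int_0^1|w_i|^2\,\dif y_i$, so the domination you invoke goes in the wrong direction: taking $w_i$ mean-free makes your claimed intermediate inequality ("the shift quadratic dominates $\sum_i\int\Phi_i(p(v)-p(\widetilde v))^2$ up to an $O(\delta_i)$ residue, producing the good negative term") fail outright, since then the left-hand side is a positive term of full size $\delta_i\alpha_m\int_0^1|w_i|^2$.

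The correct architecture (and the paper's) is the opposite allocation: the diffusion $\tfrac32 D$, bounded below via $-D\le -\delta_i\alpha_m(1-O(\delta_0+\varepsilon_1))\int_0^1 y_i(1-y_i)|\partial_{y_i}w_i|^2\,\dif y_i$ and the sharp Poincar\'e-type inequality $\int_0^1|w-\bar w|^2\,\dif y\le\frac12\int_0^1 y(1-y)|w'|^2\,\dif y$, absorbs the \emph{mean-free} part of the main $B_2$ term (this is why the sharp constant and the factor $3/2$ in front of $D$ matter; it is not a small $O(\delta_i)$ residue), leaving a positive multiple of $\bar w_i^2=\bigl(\int_0^1 w_i\,\dif y_i\bigr)^2$ that Poincar\'e cannot see; it is precisely this mean part that the shift term $-\frac{\delta_i}{2M}|\dot X_i|^2$ cancels, and the choice $M=\frac54\sigma_m^4\alpha_m=\frac{5(\gamma+1)}{8\gamma p(v_m)}(-p'(v_m))^{3/2}$ is calibrated to that cancellation (using $\dot X_i\approx -\frac{2M}{\sigma_m^2}\int_0^1 w_i\,\dif y_i$, up to the $\widetilde\Pi_i$-correction coming from \eqref{2.2}, which must also be estimated). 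The leftover $-\frac{\delta_i\alpha_m}{16}\int_0^1|w_i|^2\,\dif y_i$ is what produces the first (negative) integral on the right-hand side of the lemma. So to repair your proof you must swap the roles of the shift and the diffusion in the key step; with that fix, the rest of your reductions (use of $\eqref{2.2}_1$, Taylor expansion of $v-\widetilde v$ in $p(v)-p(\widetilde v)$, Lemma \ref{le4.1}(3), decoupling of $B_3,B_4,B_5$) are consistent with the paper.
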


\begin{proof}
Let new variables $y_1,y_2$ as follows:
\begin{equation}\label{4.5}
y_1:=\frac{p(\widetilde{v}_1(x-\sigma_1t))-p(v_-)}{\delta_1},\qquad
y_2:=\frac{p(v_m)-p(\widetilde{v}_2(x-\sigma_2t))}{\delta_2}.
\end{equation}
For each $i$, using Lemma \ref{le2.1}, we have
\[
\frac{\dif y_1}{\dif \xi_1}=\frac{1}{\delta_1}p^{\prime}(\widetilde{v}_1)(\widetilde{v}_1)_{\xi_1}>0,\quad
\frac{\dif y_2}{\dif \xi_2}=-\frac{1}{\delta_2}p^{\prime}(\widetilde{v}_2)(\widetilde{v}_2)_{\xi_2}>0
\]
and
\[
\lim_{\xi_i\rightarrow-\infty}y_i=0,\quad\lim_{\xi_i\rightarrow+\infty}y_i=1.
\]
Then, we using new variables $y_1,y_2$ to define new perturbation $w_1,w_2$, respectively:
\begin{equation}\label{4.6}
\begin{aligned}
w_1:=&\phi_1(x+X_1(t))\Big(p(v(t,x+X_1(t)))\\
&-p\left(\widetilde{v}_1(x-\sigma_1t)+\widetilde{v}_2(x-\sigma_2t-X_2(t)+X_1(t))-v_m\right)\Big)\circ y_1^{-1},\\
w_2:=&\phi_2(x+X_2(t))\Big(p(v(t,x+X_2(t)))\\
&-p\left(\widetilde{v}_1(x-\sigma_1t-X_1(t)+X_2(t))+\widetilde{v}_2(x-\sigma_2t)-v_m\right)\Big)\circ y_2^{-1}.
\end{aligned}
\end{equation}

In addition, the following estimates are hold:
\begin{align}\label{4.7}
|\sigma_1-(-\sigma_m)|<C\delta_1,\quad |\sigma_2-\sigma_m|<C\delta_2
\end{align}
and
\begin{equation}\label{4.8}
\begin{aligned}
&|\sigma_m^2-|p^{\prime}(\widetilde{v}_i)||\le C\delta_i,\quad
\Big|\frac{1}{\sigma_m^2}-\frac{p(\widetilde{v}_i)^{-\frac{1}{\gamma}-1}}{\gamma}\Big|\le C\delta_i,
&\Big|\frac{1}{\sigma_m^2}-\frac{p(\widetilde{v})^{-\frac{1}{\gamma}-1}}{\gamma}\Big|\le C\delta_0,
\end{aligned}
\end{equation}
where $\sigma_m=\sqrt{-p^{\prime}(v_m)}$.

Firstly, we estimates the shift part $\frac{\delta_i}{2M}|\dot{X}_i|^2$. Since the estimates of $\frac{\delta_1}{2M}|\dot{X}_1|^2$ and$\frac{\delta_i}{2M}|\dot{X}_2|^2$ are similar, we only estimate $\frac{\delta_1}{2M}|\dot{X}_1|^2$.

Using \eqref{2.2}, we have
\begin{align*}
Y_{11}=\frac{1}{\sigma_1^2}\int_\mathbb{R}a\phi_1\left(p(\widetilde{v}_1)^{X_1}_x-(\widetilde{\Pi}_1)^{X_1}_x\right)
(p(v)-p(\widetilde{v}))\dif x
+\int_\mathbb{R}a\phi_2(\widetilde{v}_1)^{X_1}_x(p(v)-p(\widetilde{v}))\dif x.
\end{align*}
Changing the variable $x\rightarrow x+X_1(t)$ and applying the new variable for $y_1,w_1$, we derive that
\[
\frac{1}{\sigma_1^2}\int_\mathbb{R}a\phi_1p(\widetilde{v}_1)^{X_1}_x(p(v)-p(\widetilde{v}))\dif x
=\frac{\delta_1}{\sigma_1^2}\int_0^1a(t,x+X_1(t))w_1\dif y_1.
\]
Then, using \eqref{4.6}, \eqref{4.8} and definition of $a(t,x)$ in \eqref{2.11}, we have
\[
\Big|Y_{11}-\frac{\delta_1}{\sigma_m^2}\int_0^1w_1\dif y_1\Big|\le C\delta_1\delta_0\int_0^1|w_1|\dif y_1
+C\int_\mathbb{R}\Big(\phi_2|(\widetilde{v}_1)^{X_1}_x|+\phi_1|(\widetilde{\Pi}_1)^{X_1}_x|\Big)|p(v)-p(\widetilde{v})|\dif x.
\]
To estimates $Y_{12}$, we first notice that
\[
\Big|v-\widetilde{v}-\Big(-\frac{p(\widetilde{v})^{-\frac{1}{\gamma}-1}}{\gamma}(p(v)-p(\widetilde{v}))\Big)\Big|\le C|p(v)-p(\widetilde{v})|^2.
\]
Then, using \eqref{3.5} and \eqref{4.8}, we derive that
\[
\Big|v-\widetilde{v}-\Big(-\frac{1}{\sigma_m^2}(p(v)-p(\widetilde{v}))\Big)\Big|\le C(\delta_0+\varepsilon_1)|p(v)-p(\widetilde{v})|.
\]

Next, changing the variable $x\rightarrow x+X_1(t)$, applying the new variable for $y_1,w_1$ and using \eqref{4.8} and the definition of $a(t,x)$ in \eqref{2.11}, it holds
\[
\begin{aligned}
\Big|Y_{12}-\frac{\delta_1}{\sigma_m^2}\int_0^1w_1\dif y_1\Big|&\le
\delta_1\int_0^1a\Big|v-\widetilde v+\frac{w_1}{\sigma_m^2}\Big|\dif y_1
+\delta_1\int_0^1(a-1)\Big|\frac{w_1}{\sigma_m^2}\Big|\dif y_1\\
&\quad+C\int_\mathbb{R}\phi_2|(\widetilde{v}_1)^{X_1}_x||p(v)-p(\widetilde{v})|\dif x\\
&\le C\delta_1(\delta_0+\varepsilon_1)\int_0^1|w_1|\dif y_1
+C\int_\mathbb{R}\phi_2|(\widetilde{v}_1)^{X_1}_x||p(v)-p(\widetilde{v})|\dif x.
\end{aligned}
\]
Combining the above estimates for $Y_{11}$ and $Y_{12}$, we get
\begin{align*}
\Big|\dot{X}_1+\frac{2M}{\sigma_m^2}\int_0^1w_1\dif y_1\Big|&\le \frac{M}{\delta_1}\sum\limits_{j=1}\limits^2\Big|Y_{1j}-\frac{\delta_1}{\sigma_m^2}\int_0^1w_1\dif y_1\Big|\\
&\le C(\delta_0+\varepsilon_1)\int_0^1|w_1|\dif y_1
+\frac{C}{\delta_1}\int_\mathbb{R}\Big(\phi_2|(\widetilde{v}_1)^{X_1}_x|+\phi_1|(\widetilde{\Pi}_1)^{X_1}_x|\Big)|p(v)-p(\widetilde{v})|\dif x,
\end{align*}
which implies
\begin{align*}
\left(\Big|\frac{2M}{\sigma_m^2}\int_0^1w_1\dif y_1\Big|-|\dot{X}_1|\right)^2\le &C(\delta_0+\varepsilon_1)^2\int_0^1|w_1|^2\dif y_1
+\frac{C}{\delta_1^2}\left(\int_\mathbb{R}\phi_2|(v_1)^{X_1}_x|
|p(v)-p(\widetilde{v})|\dif x\right)^2\\
&+\frac{C}{\delta_1^2}\left(\int_\mathbb{R}\phi_1|(\widetilde{\Pi}_1)^{X_1}_x||p(v)-p(\widetilde{v})|\dif x\right)^2.
\end{align*}
On the other hand, using H\"{o}lder inequality, Lemma \ref{le2.1} and using new variable $y_1$ and $w_1$, we have
\begin{align*}
&\frac{C}{\delta_1^2}\left(\int_\mathbb{R}\phi_2|(v_1)^{X_1}_x|
|p(v)-p(\widetilde{v})|\dif x\right)^2\\
&\le\frac{C}{\delta_1^2}\int_\mathbb{R}(\phi_2|(v_1)^{X_1}_x|)^2\dif x
\int_\mathbb{R}|
p(v)-p(\widetilde{v})|^2\dif x\\
&\le \frac{C}{\delta_1^2}\|\phi_2|(v_1)^{X_1}_x|\|_{L^\infty}\int_\mathbb{R}|(v_1)^{X_1}_x|\dif x
\int_\mathbb{R}Q(v|\widetilde{v})\dif x\\
&\le C\delta_1 e^{-C\delta_1 t}\int_\mathbb{R}\eta(U|\widetilde{U})\dif x.
\end{align*}
and
\begin{align*}
&\frac{C}{\delta_1^2}\left(\int_\mathbb{R}|(\widetilde{\Pi}_1)^{X_1}_x|
|p(v)-p(\widetilde{v})|\dif x\right)^2\\
&\le\frac{C}{\delta_1^2}\int_\mathbb{R}|(\widetilde{\Pi}_1)^{X_1}_x|\dif x
\int_\mathbb{R}|(\widetilde{\Pi}_1)^{X_1}_x||
(p(v)-p(\widetilde{v}))|^2\dif x\\
&\le C\delta_1\int_\mathbb{R}|(\widetilde{v}_1)^{X_1}_x||
(p(v)-p(\widetilde{v}))|^2\dif x\\
&\le C\delta_1^2\int_0^1|w_1|^2\dif y_1.
\end{align*}
On the other hand, using the algebraic inequality, we derive that
\begin{align*}
-\frac{\delta_1}{2M}|\dot{X}_1|^2\le&-\frac{M\delta_1}{\sigma_m^4}\left(\int_0^1w_1\dif y_1\right)^2
+C\delta_1(\delta_0^2+\varepsilon_1^2)\int_0^1|w_1|^2\dif y_1
+C\delta_1^2 e^{-C\delta_1 t}\int_\mathbb{R}\eta(U|\widetilde{U})\dif x.
\end{align*}

Next, we estimates the bad term $B_1$ and good term $G_2$. Recalling that
\[
B_1(U):=\sum\limits_{i=1}\limits^2
   \underbrace{\frac{1}{2\sigma_i}\int_\mathbb{R}(a_i)_{x}^{X_i}(p(v)-p(\widetilde{v}))^2\dif x}_{=:B_{1i}},\quad
G_2(U):=\sum\limits_{i=1}\limits^2\underbrace{\sigma_i\int_\mathbb{R}(a_i)^{X_i}_{x}H(v|\widetilde{v})}
_{=:G_{2i}}\dif x.
\]
For simplicity, we only estimates the case of $i=1$. Firstly, using Lemma \ref{le4.1}, we obtain
\begin{align*}
G_{21}\geq&\underbrace{\sigma_1 \int_\mathbb{R}(a_1)^{X_1}_x\frac{p(\widetilde{v}_1^{X_1})^{-\frac{1}{\gamma}-1}}{2\gamma}
|p(v)-p(\widetilde{v})|^2\dif x}_{=:\mathcal{G}_1}
-\sigma_1 \int_\mathbb{R}(a_1)^{X_1}_x\frac{1+\gamma}{3\gamma}p(\widetilde{v})^{-\frac{1}{\gamma}-2}
(p(v)-p(\widetilde{v}))^3\dif x\\
&+\frac{\sigma_1}{2\gamma}\int_\mathbb{R}(a_1)^{X_1}_x\left(p(\widetilde{v})^{-\frac{1}{\gamma}-1}-p(\widetilde{v}_1^{X_1})^{-\frac{1}{\gamma}-1}\right)
|p(v)-p(\widetilde{v})|^2\dif x.
\end{align*}
Using \eqref{4.7} and \eqref{4.8}, we have
\begin{align*}
B_{11}\le \frac{1}{2\sigma_m}\int_\mathbb{R}|(a_1)^{X_1}_x||p(v)-p(\widetilde{v})|^2\dif x
+\frac{C\delta_1}{2\sigma_m \sigma_1}\int_\mathbb{R}|(a_1)^{X_1}_x||p(v)-p(\widetilde{v})|^2\dif x
\end{align*}
and
\[
\frac{p(\widetilde{v}_1^{X_1})^{-\frac{1}{\gamma}-1}}{2\gamma}\ge \frac{1}{2\sigma_m^2}-C\delta_1,
\quad -\frac{\sigma_1}{\sigma_m}\ge 1-C\delta_1.
\]
Hence, we have
\[
\begin{aligned}
\mathcal{G}_1&\ge (\sigma_m-C \sigma_m\delta_1) \left(\frac{1}{2\sigma_m^2}-C\delta_1\right) \int_\mathbb{R}|(a_1)^{X_1}_x|
|p(v)-p(\widetilde{v})|^2\dif x\\
&\ge\frac{1}{2\sigma_m}(1-C\delta_1)\int_\mathbb{R}|(a_1)^{X_1}_x||p(v)-p(\widetilde{v})|^2\dif x.
\end{aligned}
\]
Then, we derive that
\begin{align*}
B_{11}-\mathcal{G}_1&\le C\delta_1\int_\mathbb{R}|(a_1)^{X_1}_x||p(v)-p(\widetilde{v})|^2\dif x\\
&\le C\delta_1\lambda_1\int_\mathbb{R}\frac{|p((\widetilde{v}_1)^{X_1}_x)|}{\delta_1}|\phi_1(p(v)-p(\widetilde{v}))|^2\dif x
+C\lambda_1\int_\mathbb{R}|(\widetilde{v}_1)^{X_1}_x)|\phi_2^2|p(v)-p(\widetilde{v})|^2\dif x.
\end{align*}
By change of variable $x\rightarrow x+X_1(t)$ and  use the new variables  $y_1$ and $w_1$, we get
\[
C\delta_1\lambda_1\int_\mathbb{R}\frac{|p((\widetilde{v}_1)^{X_1}_x)|}{\delta_1}|\phi_1(p(v)-p(\widetilde{v}))|^2\dif x=C\delta_1\lambda_1\int_0^1|w_1|^2\dif y_1.
\]
On the other hand,
\[
C\lambda_1\int_\mathbb{R}|(\widetilde{v}_1)^{X_1}_x)|\phi_2^2|p(v)-p(\widetilde{v})|^2\dif x
\le C\lambda_1\delta_1^2e^{-C\delta_1 t}\int_\mathbb{R}\eta(U|\widetilde{U})\dif x.
\]
Therefore, we have
\[
B_{11}-\mathcal{G}_1\le C\delta_1\lambda_1\int_0^1|w_1|^2\dif y_1+C\lambda_1\delta_1^2e^{-C\delta_1 t}\int_\mathbb{R}\eta(U|\widetilde{U})\dif x.
\]
Now, we estimates the bad term $B_2$, which can be written as
\[
B_2(U):=\sum\limits_{i=1}\limits^2
   \underbrace{\sigma_i\int_\mathbb{R}a(\widetilde{v}_i)^{X_i}_{x}p(v|\widetilde{v})\dif x}_{=:B{2i}}.
\]
Similarly, we only estimates the case of $i=1$. First, we have
\[
B_{21}=\sigma_1\int_\mathbb{R}a(\widetilde{v}_1)^{X_1}_{x}\phi_1^2p(v|\widetilde{v})\dif x
+\sigma_1\int_\mathbb{R}a(\widetilde{v}_1)^{X_1}_{x}(1-\phi_1^2)p(v|\widetilde{v})\dif x.
\]
Together with \eqref{2.11}, \eqref{4.5}, \eqref{4.7}, \eqref{4.8} and Lemma \ref{le4.1}, we get
\begin{align*}
&\sigma_1\delta_1\int_0^1a(t, x+X_1(t))\frac{\phi_1(x)^2}{p^{\prime}(\widetilde{v}_1)}
p(v|\widetilde{v})^{-X_1}\dif y_1\\
&\le |\sigma_1|\delta_1\int_0^1(1+C\delta_0)\Big|\frac{1}{p^{\prime}(\widetilde{v}_1)}\Big|
\left(\frac{\gamma+1}{2\gamma \sigma_mp(v_m)}\frac{\sigma_mp(v_m)}{ p(\widetilde{v})}+C\varepsilon_1\right)
|w_1|^2\dif y_1\\
&\le \delta_1(\sigma_m+C\delta_0)(1+C\delta_0)\left(\frac{1}{\sigma_m^2}+C\delta_0\right)
\alpha_m\sigma_m(1+C(\delta_0+\varepsilon_1))\int_0^1|w_1|^2\dif y_1\\
&\le \delta_1\alpha_m(1+C(\delta_0+\varepsilon_1))\int_0^1|w_1|^2\dif y_1,
\end{align*}
and
\begin{align*}
\sigma_1\int_\mathbb{R}a(\widetilde{v}_1)^{X_1}_{x}(1-\phi_1^2)p(v|\widetilde{v})\dif x
&=\sigma_1\int_\mathbb{R}a(\widetilde{v}_1)^{X_1}_{x}(1+\phi_1)\phi_2p(v|\widetilde{v})\dif x\\
&\le C\int_\mathbb{R}|\widetilde{v}_1)^{X_1}_{x}|\phi_2|p(v)-p(\widetilde{v})|^2\dif x\\
&\le C\delta_1^2e^{-C\delta_1 t}\int_\mathbb{R}\eta(U|\widetilde{U})\dif x.
\end{align*}
where $\alpha_m=\frac{\gamma+1}{2\gamma\sigma_m p(v_m)}$.

For $D(U)$, noting that $\phi_1+\phi_1=1$, we derive that
 $$
 D(U)=\int_{\mathbb{R}}\frac{a\mu}{\gamma p(v)}(\phi_1+\phi_2)|\partial_x(p(v)-p(\widetilde{v}))|^2\dif x\geq \sum\limits_{i=1}\limits^{2}\int_{\mathbb{R}}\frac{a\mu}{\gamma p(v)}\phi_i^2|\partial_x(p(v)-p(\widetilde{v}))|^2\dif x.
 $$
 For any $\delta_\ast\in(0,1)$ small enough, Young's inequality yields 
 \begin{align*}
 &\int_{\mathbb{R}}\frac{a\mu}{\gamma p(v)}|\partial_x(\phi_i(p(v)-p(\widetilde{v})))|^2\dif x\\&\le (1+\delta_{\ast})\int_{\mathbb{R}}\frac{a\mu}{\gamma p(v)}\phi_i^2|\partial_x(p(v)-p(\widetilde{v}))|^2\dif x+\frac{C}{\delta_{\ast}}\int_{\mathbb{R}}\frac{a\mu}{\gamma p(v)}|\partial_x\phi_i|^2|(p(v)-p(\widetilde{v}))|^2\dif x,
 \end{align*}
 then, it holds
 \begin{align*}
 -D(U)&\le -\frac{1}{1+\delta_{\ast}}\sum\limits_{i=1}\limits^{2}\int_{\mathbb{R}}\frac{a\mu}{\gamma p(v)}|\partial_x(\phi_i(p(v)-p(\widetilde{v})))|^2\dif x+\frac{C}{\delta_{\ast}}\sum\limits_{i=1}\limits^{2}\int_{\mathbb{R}}\frac{a\mu}{\gamma p(v)}|\partial_x\phi_i|^2|(p(v)-p(\widetilde{v}))|^2\dif x\\
 &=:D_1+D_2.
 \end{align*}
Following the similar method of Lemma 4.5 in \cite{WY}, we derive 
\begin{equation}\label{1225-4}
D_1\le -\sum\limits_{i=1}\limits^2\delta_i\alpha_m(1-C(\delta_0+\varepsilon_1))
\int_0^1y_i(1-y_i)|\partial_{y_i}w_i|^2\dif y_i.
\end{equation}
From \eqref{1225-3} and \eqref{ctf9.15}, the following holds for $D_2$ for any $(t, x)\in (0,T]\times\mathbb{R}$, it holds
\[
|\partial_x\phi(t,x)|\le  \frac{4}{\sigma_2-\sigma_1}\frac{1}{2}, \quad i=1,2. 
\] 
Thus,
\begin{equation}\label{1225-5}
D_2\le \frac{C}{\delta_\ast t^2}\int_\mathbb{R}\eta(U|\widetilde{U})\dif x.
\end{equation}
Therefore, combining \eqref{1225-4} and \eqref{1225-5}, we conclude that
\[
-D(U)\le-\sum\limits_{i=1}\limits^2\delta_i\alpha_m(1-C(\delta_0+\varepsilon_1))
\int_0^1y_i(1-y_i)|\partial_{y_i}w_i|^2\dif y_i
+\frac{C}{\delta_\ast t^2}\int_\mathbb{R}\eta(U|\widetilde{U})\dif x.
\]

Combining the estimates of $\frac{\delta_i}{2M}|\dot{X}_i|^2,B_1,\mathcal{G}_i,B_2$, we have
\begin{align*}
&B_1+B_2-\mathcal{G}-\frac{3}{4}D\\
&\le\sum\limits_{i=1}\limits^2\delta_i\alpha_m\left(
(1+C(\delta_0+\varepsilon_1))\int_0^1|w_i|^2\dif y_i-\frac{3}{4}(1-C_0(\delta_0+\varepsilon_1))
\int_0^1y_i(1-y_i)|\partial_{y_i}w_i|^2\dif y_i\right)\\
&\quad +C\left(\sum\limits_{i=1}\limits^2\delta_i^2e^{-C\delta_i t}+\frac{1}{\delta_\ast t^2}\right)\int_\mathbb{R}\eta(U|\widetilde{U})\dif x,
\end{align*}
where  $\mathcal{G}:=\mathcal{G}_1+\mathcal{G}_2$ and $$\mathcal{G}_i:=\sigma_i \int_\mathbb{R}(a_i)^{X_i}_x\frac{p(\widetilde{v}_i^{X_i})^{-\frac{1}{\gamma}-1}}{2\gamma}
|p(v)-p(\widetilde{v})|^2\dif x.$$

In the following , we shall use  the {\it {Poincar\'{e}-type inequality:}}(see \cite{WY})
$$
\int_0^1\Big|f-\int_0^1f\dif y\Big|^2\dif y\le \int_0^1\frac{1}{2}y(1-y)|f^{\prime}|^2\dif y,
$$
where $f:[0,1]\rightarrow \mathbb{R}$ with $\int_0^1y(1-y)|f^{\prime}|^2\dif y<\infty$.
Noting that
\[
\int_0^1|w-\bar{w}|^2\dif y=\int_0^1w^2\dif y-\bar{w}^2,\quad \bar{w}:=\int_0^1 w \dif y,
\]
and using the smallness of $\delta_0,\delta_1,\delta_2,\delta_\ast,\varepsilon_1$, we derive that
\begin{align*}
&B_1+B_2-\mathcal{G}-\frac{3}{4}D\\
&\le\sum\limits_{i=1}\limits^2\left(
-\frac{\delta_i\alpha_m}{8}\int_0^1|w_i|^2\dif y_i
+\frac{5\delta_i\alpha_m}{4}\left(\int_0^1w_i\dif y_i\right)^2\right)
+C\left(\sum\limits_{i=1}\limits^2\delta_i^2e^{-C\delta_i t}+\frac{1}{ t^2}\right)\int_\mathbb{R}\eta(U|\widetilde{U})\dif x.
\end{align*}
Finally, combining the estimates of $\frac{\delta_i}{2M}|\dot{X}_i|^2$ and choosing $M=\frac{5}{4}\sigma_m^4\alpha_m$, we have
\begin{align*}
&\sum\limits_{i=1}\limits^2\frac{\delta_i}{2M}|\dot{X}_i|^2
+B_1+B_2-G_2-\frac{3}{4}D\\
&\le\sum\limits_{i=1}\limits^2\left(
-\frac{\delta_i\alpha_m}{16}\int_0^1|w_i|^2\dif y_i
-\frac{\sigma_i}{2\gamma}\int_\mathbb{R}(a_i)^{X_i}_x
\left(p(\widetilde{v})^{-\frac{1}{\gamma}-1}-p(\widetilde{v}_i^{X_i})^{-\frac{1}{\gamma}-1}
|p(v)-p(\widetilde{v})|^2\dif x\right)\right.\\
&\qquad\left.+\sigma_i \int_\mathbb{R}(a_i)^{X_i}_x\frac{1+\gamma}{3\gamma}p(\widetilde{v})^{-\frac{1}{\gamma}-2}
(p(v)-p(\widetilde{v}))^3\dif x\right)
+C\left(\sum\limits_{i=1}\limits^2\delta_i^2e^{-C\delta_i t}+\frac{1}{ t^2}\right)\int_\mathbb{R}\eta(U|\widetilde{U})\dif x
\end{align*}
which implies
\begin{align*}
&\sum\limits_{i=1}\limits^2\frac{\delta_i}{2M}|\dot{X}_i|^2
+B_1+B_2-G_2-\frac{3}{4}D\\
&\le C\sum\limits_{i=1}\limits^2
\int_\mathbb{R}\left(-|(\widetilde{v}_i)^{X_i}_x||\phi_i(p(v)-p(\widetilde{v}))|^2
+|(a_i)^{X_i}_x||p(v)-p(\widetilde{v})|^3
+|(a_i)^{X_i}_x||\widetilde{v}-\widetilde{v}_i^{X_i}||p(v)-p(\widetilde{v})|^2\right)\dif x\\
&\quad+C\left(\sum\limits_{i=1}\limits^2\delta_i^2e^{-C\delta_i t}+\frac{1}{ t^2}\right)\int_\mathbb{R}\eta(U|\widetilde{U})\dif x.
\end{align*}
Thus, the proof of this lemma is finished.
\end{proof}
Now, we are ready to prove Lemma \ref{le4.2}. Firstly, using Young's inequality, we have
\[
\sum\limits_{i=1}\limits^2\left(\dot{X}_i\sum\limits_{j=3}\limits^8Y_{ij}\right)\le
\sum\limits_{i=1}\limits^2\frac{\delta_i}{4M}|\dot{X}_i|^2
+\sum\limits_{i=1}\limits^2\frac{C}{\delta_i}\sum\limits_{j=3}\limits^8|Y_{ij}|^2.
\]
Then, we derive that
\begin{align*}
&\frac{\dif}{\dif t}\int_\mathbb{R}a\eta(U|\widetilde{U})\dif x \\ &\le C(-\mathcal{G}^s+K_1+K_2)
+C\left(\sum\limits_{i=1}\limits^2\delta_i^2e^{-C\delta_i t}+\frac{1}{\delta_\ast t^2}\right)\int_\mathbb{R}\eta(U|\widetilde{U})\dif x\\
&\quad-\sum\limits_{i=1}\limits^2\frac{\delta_i}{4M}|\dot{X}_i|^2
+\sum\limits_{i=1}\limits^2\frac{C}{\delta_i}\sum\limits_{j=3}\limits^8|Y_{ij}|^2
+\sum\limits_{i=3}\limits^7B_i-G_1-G_3-G+\frac{3}{4}D,
\end{align*}
where
\[
\mathcal{G}^s:=\sum\limits_{i=1}\limits^2
\int_\mathbb{R}|(\widetilde{v}_i)^{X_i}_x||\phi_i(p(v)-p(\widetilde{v}))|^2\dif x,\quad
K_1:=\sum\limits_{i=1}\limits^2
\int_\mathbb{R}|(a_i)^{X_i}_x||p(v)-p(\widetilde{v})|^3\dif x,
\]
\[
K_2:=\sum\limits_{i=1}\limits^2
\int_\mathbb{R}|(a_i)^{X_i}_x||\widetilde{v}-\widetilde{v}_i^{X_i}||p(v)-p(\widetilde{v})|^2\dif x.
\]

For $K_1$, firstly, we have
\[
\int_\mathbb{R}|(a_i)^{X_i}_x||p(v)-p(\widetilde{v})|^3\dif x
\le \frac{C\lambda_i}{\delta_i}\int_\mathbb{R}|(\widetilde v)^{X_i}_x|(\phi_i+1-\phi_i)|p(v)-p(\widetilde{v})|^3\dif x,
\]
then, using the interpolation inequality, H\"{o}lder inequality, Young's inequality,  \eqref{2.13}, Lemma \ref{le2.1} and $\lambda_i\le C\sqrt{\delta_i}$, we get
\begin{align*}
&\frac{\lambda_i}{\delta_i}\int_\mathbb{R}|(\widetilde v)^{X_i}_x|\phi_i|p(v)-p(\widetilde{v})|^3\dif x\\
&\le \frac{C\lambda_i}{\delta_i}\|p(v)-p(\widetilde{v})\|^2_{L^{\infty}}
\sqrt{\int_\mathbb{R}|(\widetilde{v}_i)^{X_i}_x||\phi_i(p(v)-p(\widetilde{v}))|^2\dif x}
\sqrt{\int_\mathbb{R}|(\widetilde{v}_i)^{X_i}_x|\dif x}\\
&\le \frac{C\lambda_i}{\sqrt{\delta_i}}\|\partial_x(p(v)-p(\widetilde{v}))\|_{L^{2}}
\|p(v)-p(\widetilde{v})\|_{L^{2}}
\sqrt{\int_\mathbb{R}|(\widetilde{v}_i)^{X_i}_x||\phi_i(p(v)-p(\widetilde{v}))|^2\dif x}\\
&\le C\varepsilon_1(D+\mathcal{G}^s).
\end{align*}
On the other hand,
\[
\frac{\lambda_i}{\delta_i}\int_\mathbb{R}|(\widetilde v)^{X_i}_x|(1-\phi_i)|p(v)-p(\widetilde{v})|^3\dif x
\le C\varepsilon_1\lambda_i\delta_ie^{-C\delta_i t}\int_\mathbb{R}\eta(U|\widetilde{U})\dif x.
\]
Thus, it holds
\[
K_1\le C\varepsilon_1(D+\mathcal{G}^s)+C\sum\limits_{i=1}\limits^2\varepsilon_1\lambda_i\delta_ie^{-C\delta_i t}\int_\mathbb{R}\eta(U|\widetilde{U})\dif x.
\]
Similarly, for $K_2$, by addtionally using Lemma \ref{le4.3}, one obtain
\begin{align*}
K_2&\le\sum\limits_{i=1}\limits^2\frac{C\lambda_i}{\delta_i}\|p(v)-p(\widetilde{v})\|_{L^4}^2
\left\||(\widetilde{v}_i)^{X_i}_x||\widetilde{v}-\widetilde{v}_i^{X_i}|\right\|_{L^2}\\
&\le \sum\limits_{i=1}\limits^2\frac{C\lambda_i}{\delta_i}\|p(v)-p(\widetilde{v})\|_{L^2}^{3/2}
\|\partial_x(p(v)-p(\widetilde{v}))\|_{L^2}^{1/2}
\left\||(\widetilde{v}_i)^{X_i}_x||\widetilde{v}-\widetilde{v}_i^{X_i}|\right\|_{L^2}\\
&\le C\varepsilon_1\sum\limits_{i=1}\limits^2\sqrt{\delta_i}
\|\partial_x(p(v)-p(\widetilde{v}))\|_{L^2}^{1/2}
\left\|\sqrt{|(\widetilde{v}_i)^{X_i}_x|}|\widetilde{v}-\widetilde{v}_i^{X_i}|\right\|_{L^2}\\
&\le C\varepsilon_1\left(D
+\sum\limits_{i=1}\limits^2\delta_i\left\|\sqrt{|(\widetilde{v}_i)^{X_i}_x|}|\widetilde{v}-\widetilde{v}_i^{X_i}|\right\|_{L^2}^2\right).
\end{align*}
For $Y_{ij}$, using \eqref{2.13}, Young's inequality and H\"{o}lder inequality, we first get
\[
\begin{aligned}
Y_{i3}&\le C\frac{\delta_i}{\lambda_i}\int_\mathbb{R}|(a_i)^{X_i}_x|
\Big|u-\widetilde{u}-\frac{p(v)-p(\widetilde{v})}{\sigma_i}\Big|\dif x\\
&\le C\frac{\delta_i}{\lambda_i}\left(\int_\mathbb{R}|(a_i)^{X_i}_x|
\dif x\right)^{\frac{1}{2}}\left(\int_\mathbb{R}|(a_i)^{X_i}_x|
\Big|u-\widetilde{u}-\frac{p(v)-p(\widetilde{v})}{\sigma_i}\Big|^2\dif x\right)^{\frac{1}{2}}\\
&\le C\frac{\delta_i}{\sqrt{\lambda_i}}\left(\int_\mathbb{R}|(a_i)^{X_i}_x|
\Big|u-\widetilde{u}-\frac{p(v)-p(\widetilde{v})}{\sigma_i}+\frac{\Pi-\widetilde{\Pi}}{\sigma_i}\Big|^2\dif x+
\int_\mathbb{R}|(a_i)^{X_i}_x|
\Big|\frac{\Pi-\widetilde{\Pi}}{\sigma_i}\Big|^2\dif x\right)^{\frac{1}{2}}\\
&\le C\frac{\delta_i}{\sqrt{\lambda_i}}\sqrt{G_1+G}.
\end{aligned}
\]
Meanwhile, using Lemmas \ref{le2.1}, \ref{le4.3} and H\"{o}lder inequality, we have
\begin{align*}
Y_{i4}&\le C\int_\mathbb{R}|\widetilde{v}-\widetilde{v}_i^{X_i}||(\widetilde{v}_i)^{X_i}_x|
|p(v)-p(\widetilde{v})|\dif x\\
&\le C\sqrt{\int_\mathbb{R}(|\widetilde{v}-\widetilde{v}_i^{X_i}||(\widetilde{v}_i)^{X_i}_x|)^2
\dif x}\sqrt{\int_\mathbb{R}\eta(U|\widetilde U)\dif x}\\
&\le C\sqrt{\delta_i}\delta_1\delta_2e^{-C\min\{\delta_1, \delta_2\}t}\sqrt{\int_\mathbb{R}\eta(U|\widetilde U)\dif x}.
\end{align*}
Similarly, for $Y_{i5},Y_{i6},Y_{i7}$, we have
\[
Y_{i5}\le C\tau\int_\mathbb{R}|(\widetilde{\Pi}_i)^{X_i}_x||\Pi-\widetilde{\Pi}|\dif x
\le C\tau\delta_i \sqrt{G},
\]
\[
Y_{i6}\le\sqrt{\tau}\|\Pi-\widetilde{\Pi}\|_{L^\infty}
\int_\mathbb{R}|(a_i)_x^{X_i}|\frac{\sqrt{\tau}|\Pi-\widetilde{\Pi}|}{2\mu}\dif x
\le C\varepsilon_1\lambda_i\sqrt{\tau\delta_i}\sqrt{G},
\]
and
\[
Y_{i7}\le C\varepsilon_1\|(a_i)_x^{X_i}\|_{L^\infty}^{\frac{1}{2}}\sqrt{G_1+G}
\le C\varepsilon_1\sqrt{\lambda_i\delta_i}\sqrt{G_1+G}.
\]
For $Y_{i8}$, by using \eqref{2.13}, Lemmas \ref{le2.1}, \ref{le4.3} and H\"{o}lder inequality, we get
\begin{align*}
\frac{C}{\delta_i}|Y_{i8}|^2
&\le\frac{C\lambda_i^2}{\delta_i^3}
\left(\int_\mathbb{R}|(\widetilde{v}_i)_x^{X_i}||p(v)-p(\widetilde{v})|^2\dif x\right)^2 \\
&\le\frac{C\lambda_i^2}{\delta_i}
\int_\mathbb{R}|p(v)-p(\widetilde{v})|^2\dif x
\int_\mathbb{R}|(\widetilde{v}_i)_x^{X_i}||p(v)-p(\widetilde{v})|^2\dif x\\
&\le C\varepsilon_1^2 \left(\int_\mathbb{R}|(\widetilde{v}_i)_x^{X_i}||\phi_i(p(v)-p(\widetilde{v}))|^2\dif x+\int_\mathbb{R}|(\widetilde{v}_i)_x^{X_i}|(1-\phi_i^2)|p(v)-p(\widetilde{v})|^2\dif x\right)\\
&\le C\varepsilon_1^2\left(\mathcal{G}^s+\delta_i^2e^{-C\delta_it}\int_\mathbb{R}\eta(U|\widetilde U)\dif x\right).
\end{align*}
Likewise, for $D$, we have
\begin{align*}
D&\le (1+\kappa)\int_\mathbb{R}\frac{\mu(p^\prime(v))^2}{\gamma p(v)}|\partial_x(v-\widetilde{v})|^2\dif x+
C\sum\limits_{i=1}\limits^2\int_\mathbb{R}|(\widetilde{v}_i)_x^{X_i}|^2|p(v)-p(\widetilde{v})|^2\dif x\\
&\le (1+\kappa)\int_\mathbb{R}\frac{\mu|p^\prime(v)|}{v_m-C\varepsilon_1}|\partial_x(v-\widetilde{v})|^2\dif x
+C\sum\limits_{i=1}\limits^2\delta_i^2\left(\mathcal{G}^s+\delta_i^2e^{-C\delta_i t}\int_{\mathbb{R}}\eta(U|\widetilde U)\dif x\right).
\end{align*}
For $B_3$, using $\|(a_i)_{x}^{X_i}\|_{L^\infty}\le C\lambda_i\delta_i$ and Lemma \ref{le2.1}, we first have
\[
B_3\le C(\lambda_1\delta_1+\lambda_2\delta_2)G.
\]
For $B_4$, using \eqref{2.13}, Lemma \ref{le2.1} and Young's inequality, we have
\begin{align*}
B_4&\le C\sum\limits_{i=1}\limits^2
   \left(\lambda_i\int_\mathbb{R}|(\widetilde{v}_i)^{X_i}_{x}|^{\frac{1}{2}}|p(v)-p(\widetilde{v})|
   |\Pi-\widetilde{\Pi}|\dif x\right)\\
&\le C\sum\limits_{i=1}\limits^2\lambda_i
   \left(\int_\mathbb{R}|(\widetilde{v}_i)^{X_i}_{x}||p(v)-p(\widetilde{v})|^2
   \dif x+G\right)\\
&\le C\sum\limits_{i=1}\limits^2\lambda_i
   \left(\mathcal{G}^s+\delta_i^2e^{-C\delta_i t}\int_{\mathbb{R}}\eta(U|\widetilde U)\dif x+G\right).
\end{align*}
Similarly, for $B_5$, using \eqref{3.1}, $|\widetilde{\Pi}_i^{X_i}|\sim|(\widetilde{v}_i)^{X_i}_x|$ and Young's inequality, we have
\begin{align*}
B_5&\le C\int_\mathbb{R}(|(\widetilde{v}_1)^{X_1}_x|+|(\widetilde{v}_2)^{X_2}_x|)
|\Pi-\widetilde{\Pi}||p(v)-p(\widetilde{v})|\dif x\\
&\le C\delta_i\sum\limits_{i=1}\limits^2
   \left(\int_\mathbb{R}|(\widetilde{v}_i)^{X_i}_{x}||p(v)-p(\widetilde{v})|^2\dif x+G\right)\\
&\le C\sum\limits_{i=1}\limits^2\lambda_i
   \left(\mathcal{G}^s+\delta_i^2e^{-C\delta_i t}\int_{\mathbb{R}}\eta(U|\widetilde U)\dif x+G\right).
\end{align*}
For $B_6,B_7$, we note that, for $i=1,2$
\begin{equation}\label{4.9}
F_i\le C\left(|(\widetilde{v}_1)^{X_1}_x||\widetilde{v}-(\widetilde{v}_1)^{X_1}_x|
+|(\widetilde{v}_2)^{X_2}_x||\widetilde{v}-(\widetilde{v}_2)^{X_2}_x|\right).
\end{equation}
So, using Lemma \ref{le4.3}, H\"{o}lder inequality and Young's inequality, we derive that
\begin{align*}
|B_6|+|B_7|
&\le C\varepsilon_1\sum\limits_{i=1}\limits^2
\left\||(\widetilde{v}_i)^{X_i}_x||\widetilde{v}-(\widetilde{v}_i)^{X_i}_x|\right\|_{L^2}
+\kappa G+C\sum\limits_{i=1}\limits^2
\left\||(\widetilde{v}_i)^{X_i}_x||\widetilde{v}-(\widetilde{v}_i)^{X_i}_x|\right\|_{L^2}^2\\
&\le \kappa G+ C\varepsilon_1\delta_1\delta_2(\delta_1^{\frac{1}{2}}+\delta_2^{\frac{1}{2}})e^{-C\min\{\delta_1,\delta_2\}t}
+ C\delta_1^2\delta_2^2(\delta_1+\delta_2)e^{-C\min\{\delta_1,\delta_2\}t}\\
&\le \kappa G+ C\delta_1\delta_2e^{-C\min\{\delta_1,\delta_2\}t}.
\end{align*}

Noting that $\frac{1}{t^2}$ is not integrable near $t=0$, so we first to get the estimate for $t\ge 1$.  Then, for any $\delta_1, \delta_2\in(0, \delta_0)$, combining all the above estimates, we derive that
\begin{align*}
&\frac{\dif}{\dif t}\int_\mathbb{R}a\eta(U|\widetilde{U})\dif x
+\sum\limits_{i=1}\limits^2\frac{\delta_i}{4M}|\dot{X}_i|^2+\frac{1}{2}G_1+G_3
+(1-\kappa-C\delta^{\frac{1}{2}}_0)G
+C_1(1-C(\delta_0+\varepsilon_1))\mathcal{G}^s\\
&\le C\delta_1\delta_2e^{-C\min\{\delta_1,\delta_2\}t}
+C\left(\sum\limits_{i=1}\limits^2\delta_i^2\delta_ie^{-C\delta_i t}+\delta_1\delta_2e^{-C\min\{\delta_1,\delta_2\}t}+\frac{1}{t^2}\right)\int_{\mathbb{R}}\eta(U|\widetilde U)\dif x\\
&\quad+\frac{3\mu}{4v_m}(1+\kappa+C\varepsilon_1)\int_\mathbb{R}|p^\prime(v)||\partial_x(v-\widetilde{v})|^2\dif x.
\end{align*}
By using Gr\"onwall inequality, it holds
\begin{align*}
&\int_\mathbb{R}a\eta(U|\widetilde{U})\dif x
+\int_1^t\left(\sum\limits_{i=1}\limits^2\frac{\delta_i}{4M}|\dot{X}_i|^2+\frac{1}{2}G_1+G_3
+(1-\kappa-C\delta^{\frac{1}{2}}_0)G
+C_1(1-C(\delta_0+\varepsilon_1))\mathcal{G}^s\right)\dif t\\
&\le C\left(\int_{\mathbb{R}}a\eta(U|\widetilde U)\dif x\Big|_{t=1}+ \frac{\delta_1\delta_2}{\min\{\delta_1,\delta_2\}}\right)\times \exp\left(\int_1^t\Big(\sum\limits_{i=1}\limits^2\delta_i^2\delta_ie^{-C\delta_i s}+\delta_1\delta_2e^{-C\min\{\delta_1,\delta_2\}s}+\frac{1}{s^2}\Big)\dif s\right)\\
&\quad+\frac{3\mu}{4v_m}(1+\kappa)\int_1^t\int_\mathbb{R}|p^\prime(v)||\partial_x(v-\widetilde{v})|^2\dif x\dif s\\
&\le C\int_{\mathbb{R}}a\eta(U|\widetilde U)\dif x\Big|_{t=1}+\frac{3\mu}{4v_m}(1+\kappa+C\varepsilon_1)\int_1^t\int_\mathbb{R}|p^\prime(v)||\partial_x(v-\widetilde{v})|^2\dif x\dif s+C\delta_0.
\end{align*}

On the other hand, firstly, we have
\begin{align*}
\frac{\dif}{\dif t}\int_\mathbb{R}a\eta(U|\widetilde{U})\dif x
=-\sum\limits_{i=1}\limits^2\frac{\delta_i}{M}|\dot{X}_i|^2
+\sum\limits_{i=1}\limits^2\left(\dot{X}_i\sum\limits_{j=3}\limits^8Y_{ij}\right)
+\sum\limits_{i=1}\limits^7B_i-\sum\limits_{i=1}\limits^3G_i-G.
\end{align*}
Using Young's inequality, we get
 \begin{align*}
\frac{\dif}{\dif t}\int_\mathbb{R}a\eta(U|\widetilde{U})\dif x
+\sum\limits_{i=1}\limits^2\frac{\delta_i}{4M}|\dot{X}_i|^2+\sum\limits_{i=1}\limits^3G_i+G+C_1\mathcal{G}^s\le
\sum\limits_{i=1}\limits^2\left(\frac{C}{\delta_i}\sum\limits_{j=3}\limits^8|Y_{ij}|^2\right)
+\sum\limits_{i=1}\limits^7B_i+C_1\mathcal{G}^s.
\end{align*}
By the H\"{o}lder inequality, Lemma \ref{le2.1}, we have
\begin{align*}
\sum\limits_{j=3}\limits^8|Y_{ij}|&\le C\|(\widetilde{v}_i)^{X_i}_x\|_{L^2}
\left(\|p(v)-p(\widetilde{v})\|_{L^2}+\|u-\widetilde{u}\|_{L^2}+\sqrt{\tau}\|S-\widetilde{S}\|_{L^2}\right)\\
&+C\|(a_i)^{X_i}_x\|_{L^\infty}
\left(\|p(v)-p(\widetilde{v})\|_{L^2}^2+\|u-\widetilde{u}\|_{L^2}^2+\tau\|S-\widetilde{S}\|_{L^2}^2\right)\\
&\le C\delta_0\varepsilon_1,
\end{align*}
which yields
\[
\sum\limits_{i=1}\limits^2\left(\frac{C}{\delta_i}\sum\limits_{j=3}\limits^8|Y_{ij}|^2\right)
\le C\varepsilon_1^2\delta_0.
\]
Similarly, we have
\[
\sum\limits_{i=1}\limits^7B_i\le C\left(\|(a_i)^{X_i}_x\|_{L^\infty}
+\|\widetilde{S}\|_{L^\infty}\right)
\|p(v)-p(\widetilde{v})\|_{L^2}^2+C(\delta_1+\delta_2)G
\le C\varepsilon_1^2\delta_0+C\delta_0G,
\]
and
\[
\mathcal{G}^s\le C\sum\limits_{i=1}\limits^2\|(\widetilde{v}_i)^{X_i}_x\|_{L^\infty}
\|p(v)-p(\widetilde{v})\|_{L^2}^2\le C\varepsilon_1^2\delta_0.
\]
Hence, combining above estimates, we derive that
\[
\frac{\dif}{\dif t}\int_\mathbb{R}a\eta(U|\widetilde{U})\dif x
+\sum\limits_{i=1}\limits^2\frac{\delta_i}{4M}|\dot{X}_i|^2+G_1+G_3+(1-C\delta_0)G+C_1\mathcal{G}^s\le C\delta_0.
\]

 Then, for a short time $0\le t\le1$, we have
 \[
 \sup_{0\le t\le1}\int_\mathbb{R}a\eta(U|\widetilde{U})\dif x
 +\int_0^1\left(\sum\limits_{i=1}\limits^2\frac{\delta_i}{4M}|\dot{X}_i|^2+G_1+G_3+(1-C\delta_0)G
 +C_1\mathcal{G}^s\right)\dif t
 \le \int_\mathbb{R}a\eta(U|\widetilde{U})\dif x\Big|_{t=0}+C\delta_0.
 \]
Finally, integrating it over $[0,T]$ for any $t\le T$, we derive that
\begin{align*}
&\int_\mathbb{R}a(t,x)\eta(U(t,x)|\widetilde{U}(t,x))\dif x
+\int_0^t\left(\sum\limits_{i=1}\limits^2\frac{\delta_i}{4M}|\dot{X}_i|^2+\frac{1}{2}G_1+G_3
+(1-\kappa-C\delta^{\frac{1}{2}}_0)G
+C_1(1-C(\delta_0+\varepsilon_1))\mathcal{G}^s\right)\dif t\\
&\le C\int_\mathbb{R}a(0,x)\eta(U(0,x)|\widetilde{U}(0,x))\dif x
+\frac{3\mu}{4v_m}(1+\kappa)\int_0^t\int_\mathbb{R}|p^\prime(v)||\partial_x(v-\widetilde{v})|^2\dif x\dif s+C\delta_0.
\end{align*}

Using Lemma \ref{le4.1}, we have finished proof of Lemma \ref{le4.2}
\subsection{High-order energy estimates}In this section, we show the high-order energy estimates for the system \eqref{4.1}. Let $\Phi=v-\widetilde{v},\Psi=u-\widetilde{u},Q=\Pi-\widetilde{\Pi}$, then we have the following lemma.
\begin{lemma}\label{Le4.7}
Under the hypotheses of Proposition \ref{p1}, there exists $C>0$ (independent of $\tau$ and $T$) such that for all $t\in(0,T]$, we have
\begin{equation}\label{26.2}
\begin{aligned}
&\|\left(\partial_{x}\Phi, \partial_{x}\Psi\right)\|_{H^1}^2
+\tau\|\partial_{x}Q\|_{H^1}^2
+\int_0^t\|\partial_{x}Q\|_{H^1}^2\dif t\leq
C(\|\left(\partial_{x}\Phi_0, \partial_{x}\Psi_0\right)\|_{H^1}^2
+\tau\|\partial_{x}Q_0\|_{H^1}^2)\\
 &+C\delta_0\int_0^tG^s(U)\dif t
+C(\delta_0+\varepsilon_1)\int_0^t\left(\|\left(\partial_{x}\Phi, \partial_{x}\Psi\right)\|_{H^1}^2+G(U)\right)\dif t
+C\int_0^t\sum\limits_{i=1}^2\delta_i^2|\dot{X}_i|^2\dif t
+C\delta_0,
\end{aligned}
\end{equation}
where $\Phi_0=v_0-\widetilde{v}_0(\xi), \Psi_0=u_0-\widetilde{u}_0(\xi), Q_0=\Pi_0-\widetilde{\Pi}_0(\xi)$, and $G^s(U), G(U)$ are defined in \eqref{3.7} and \eqref{26.1b}, respectively.
\end{lemma}
\begin{proof}
Applying $\partial^k_x(k=1,2)$ to the system \eqref{4.1}, we derive that
\begin{equation}\label{4.10}
\begin{cases}
\partial_{t}\partial_{x}^k\Phi
-\sum\limits_{i=1}\limits^2\dot{X}_i\partial_{x}^{k+1}\widetilde{v}_i^{X_i}-\partial_{x}^{k+1}\Psi=0,\\
\partial_{t}\partial_{x}^k\Psi
-\sum\limits_{i=1}\limits^2\dot{X}_i\partial_{x}^{k+1}\widetilde{u}_i^{X_i}
          +p^{\prime}(v)\partial_{x}^{k+1}\Phi=\partial_{x}^{k+1}Q+F_3^k,\\
\tau\partial_{t}\partial_{x}^kQ
-\tau\sum\limits_{i=1}\limits^2\dot{X}_i\partial_{x}^{k+1}\widetilde{\Pi}_i^{X_i}
             +v\partial_{x}^{k}Q=\mu\partial_{x}^{k+1}\Psi+F_4^k,
\end{cases}
\end{equation}
where $$F_3^k=p^{\prime}(v)\partial^{k+1}_{x}(v-\widetilde{v})-\partial^{k+1}_{x}(p(v)-p(\widetilde{v}))
-\partial^k_{x}F_1$$
and
$$F_4^k=v\partial^k_{x}(\Pi-\widetilde{\Pi})-\partial^k_{x}(v\Pi-\widetilde{v}\widetilde{\Pi})
-\partial^k_{x}F_2.$$

Multiplying the above equations by $-p^{\prime}(v)\partial_{x}^k\Phi,\partial_{x}^k\Psi,\frac{1}{\mu}\partial_{x}^kQ$, respectively, and integrating over $\mathbb{R}$, we get
\begin{align}\label{4.11}
  \frac{\dif}{\dif t}\int_\mathbb{R}\left(-\frac{p^{\prime}(v)}{2}(\partial_{x}^k\Phi)^2
  +\frac{1}{2}(\partial_{x}^k\Psi)^2
  +\frac{\tau}{2\mu}(\partial_{x}^kQ)^2\right)\dif x
   +\int_\mathbb{R}\frac{v}{\mu}(\partial_x^kQ)^2\dif x
    =:\sum\limits_{i=1}\limits^{8}R^k_i,
\end{align}
where
\[
R^k_1=-\int_\mathbb{R}\frac{p^{\prime\prime}(v)}{2}v_t(\partial_{x}^k\Phi)^2\dif x, \quad
R^k_2=\int_\mathbb{R}p^{\prime\prime}(v)v_{x}\partial_{x}^k\Phi\partial_{x}^k\Psi \dif x,
\]
\[
R^k_3=-\sum\limits_{i=1}\limits^2
\int_\mathbb{R}\dot{X}_i\partial_{x}^{k+1}\widetilde{v}^{X_i}_ip^{\prime}(v)\partial_{x}^k\Phi \dif x,
\quad
R^k_4=\sum\limits_{i=1}\limits^2
\int_\mathbb{R}\dot{X}_i\partial_{x}^{k+1}\widetilde{u}^{X_i}_i\partial_{x}^k\Psi \dif x,
\]
\[
R^k_5=\frac{\tau}{\mu}\sum\limits_{i=1}\limits^2
\int_\mathbb{R}\dot{X}_i\partial_{x}^{k+1}\widetilde{\Pi}^{X_i}_iQ\dif x,\quad
R^k_6=\int_\mathbb{R}F_3^k\partial_{x}^k\Psi \dif x,\quad
R^k_7=\frac{1}{\mu}\int_\mathbb{R}F_4^k\partial_{x}^kQ\dif x.
\]
Firstly, using Lemmas \ref{le2.1} and Sobolev's imbedding theorem, we have
\begin{align*}
R^k_1=-\int_\mathbb{R}\frac{p^{\prime\prime}(v)}{2}(u-\widetilde{u})_x(\partial_{x}^k\Phi)^2\dif x
-\int_\mathbb{R}\frac{p^{\prime\prime}(v)}{2}\widetilde{u}_x(\partial_{x}^k\Phi)^2\dif x
\le C(\varepsilon_1+\delta_1^2+\delta_2^2)\int_\mathbb{R}(\partial_{x}^k\Phi)^2\dif x.
\end{align*}
Similarly, for $R^k_2$, using Lemma \ref{le2.1}, Sobolev's imbedding theorem and Young's inequality, we have
\begin{align*}
R^k_2\le C(\varepsilon_1+\delta_1^2+\delta_2^2)\left(\int_\mathbb{R}(\partial_{x}^k\Phi)^2\dif x
+\int_\mathbb{R}(\partial_{x}^k\Psi)^2\dif x\right).
\end{align*}
For $R^k_3$, using Lemma \ref{le2.1} and Young's inequality, we have
\begin{align*}
\int_\mathbb{R}\dot{X}_i\partial_{x}^{k+1}\widetilde{v}^{X_i}_xp^{\prime}(v)\partial_{x}^k\Phi \dif x
&\le C\delta_i|\dot{X}_i|^2\int_\mathbb{R}|\partial_{x}^{k+1}\widetilde{v}^{X_i}_i|\dif x
+\frac{C}{\delta_i}\int_\mathbb{R}|\partial_{x}^{k+1}\widetilde{v}^{X_i}_i|(\partial_{x}^k\Phi)^2 \dif x\\
&\le C\delta_i^2|\dot{X}_i|^2
+C\delta_i\int_\mathbb{R}(\partial_{x}^k\Phi)^2 \dif x.
\end{align*}
So, we get
\[
R^k_3\le C\sum\limits_{i=1}\limits^2\left(\delta_i^2|\dot{X}_i|^2
+\delta_i\int_\mathbb{R}(\partial_{x}^k\Phi)^2 \dif x\right).
\]
Similarly, for $R^k_4,R^k_5$, we have
\[
R^k_4\le C\sum\limits_{i=1}\limits^2\left(\delta_i^2|\dot{X}_i|^2
+\delta_i\int_\mathbb{R}(\partial_{x}^k\Psi)^2 \dif x\right),\quad
R^k_5\le C\tau\sum\limits_{i=1}\limits^2\left(\delta_i^2|\dot{X}_i|^2
+\delta_i\int_\mathbb{R}(\partial_{x}^kQ)^2 \dif x\right).
\]
Next, we estimate $R_6^k$. For $k=1$, since
\begin{align*}
  &p^{\prime}(v)(v-\widetilde{v})_{xx}-(p(v)-p(\widetilde{v}))_{xx}\\
   &=-p^{\prime\prime}(v)v^2_{x}-p^{\prime}(v)\widetilde{v}_{xx}
                    +p^{\prime\prime}(\widetilde{v})\widetilde{v}^2_{x}
                    +p^{\prime}(\widetilde{v})\widetilde{v}_{xx}\\
   &=-p^{\prime\prime}(v)(v_{x}-\widetilde{v}_{x})^2
   -2p^{\prime\prime}(v)(v_{x}-\widetilde{v}_{x})\widetilde{v}_{x}
     -(p^{\prime\prime}(v)-p^{\prime\prime}(\widetilde{v}))\widetilde{v}^2_{x}
     -(p^{\prime}(v)-p^{\prime}(\widetilde{v}))\widetilde{v}_{xx},
\end{align*}
using Lemma \ref{le2.1}, we have
\begin{align*}
 &\int_\mathbb{R}(u-\widetilde{u})_{x}(p^{\prime}(v)(v-\widetilde{v})_{xx}-(p(v)-p(\widetilde{v}))_{xx})\dif x\\
      &\leq C\int_\mathbb{R}|(u-\widetilde{u})_{x}||(v-\widetilde{v})_{x}|^2\dif x
      +C\int_\mathbb{R}|(u-\widetilde{u})_{x}||(v-\widetilde{v})_{x}||\widetilde{v}_{x}|\dif x
      +C\int_\mathbb{R}|(u-\widetilde{u})_{x}||v-\widetilde{v}||\widetilde{v}_{x}|\dif x.
\end{align*}
We estimate each term of the right hand side of the above inequality. Using Young's inequality and Sobolev's imbedding theorem, we have
\[
\int_\mathbb{R}|(u-\widetilde{u})_{x}||(v-\widetilde{v})_{x}|^2\dif x\leq
 C\varepsilon_1\int_\mathbb{R}|\partial_{x}\Phi|^2\dif x
\]
and
\begin{align*}
\int_\mathbb{R}|(u-\widetilde{u})_{x}||(v-\widetilde{v})_{x}||\widetilde{v}_{x}|\dif x\leq C(\delta_1^2+\delta_2^2)
(\int_\mathbb{R}|\partial_{x}\Phi|^2\dif x+\int_\mathbb{R}|\partial_{x}\Psi|^2\dif x).
\end{align*}
In a similar way, using Lemma \ref{le2.1} and Young's inequality, we get
\begin{align*}
&\int_\mathbb{R}|(u-\widetilde{u})_{x}||v-\widetilde{v}||(\widetilde{v}_i)^{X_i}_{x}|\dif x\\
&\le C\delta_i\left(\int_\mathbb{R}|\partial_{x}\Psi|^2\dif x
+\int_\mathbb{R}|(\widetilde{v}_i)^{X_i}_{x}||v-\widetilde{v}|^2\dif x\right)\\
&\le  C\delta_i\left(\int_\mathbb{R}|\partial_{x}\Psi|^2\dif x
+\int_\mathbb{R}|(\widetilde{v}_i)^{X_i}_{x}||\phi_i(v-\widetilde{v})|^2\dif x
+\int_\mathbb{R}|(\widetilde{v}_i)^{X_i}_{x}|(1-\phi_i^2)|v-\widetilde{v}|^2\dif x\right)\\
&\le C\delta_i\left(\int_\mathbb{R}|\partial_{x}\Psi|^2\dif x
+G^s(U)
+\varepsilon_1 \delta_i^2e^{-C\delta_i t}\right).
\end{align*}
Hence, we derive that
\begin{align*}
 &\int_\mathbb{R}(u-\widetilde{u})_{x}(p^{\prime}(v)(v-\widetilde{v})_{xx}-(p(v)-p(\widetilde{v}))_{xx})\dif x\\
  &\le C(\varepsilon_1+\delta_1+\delta_2)\int_\mathbb{R}|\partial_{x}\Phi|^2\dif x
  +C(\delta_1+\delta_2)\int_\mathbb{R}|\partial_{x}\Psi|^2\dif x
  +C(\delta_1+\delta_2)G^s+C\sum\limits_{i=1}\limits^2\varepsilon_1 \delta_i^3e^{-C\delta_i t}.
\end{align*}
On the other hand, using Lemma \ref{le2.1}, we derive that for $k=1,2$,
\begin{align*}
\partial_x^{k}F_1&=
\partial_x^{k+1}\left(p(\widetilde{v})-p(\widetilde{v}_1^{X_1})-p(\widetilde{v}_2^{X_2})\right)\\
&\le
C\left(|(\widetilde{v}_1)^{X_1}_x||\widetilde{v}-(\widetilde{v}_1)^{X_1}|
+|(\widetilde{v}_2)^{X_2}_x||\widetilde{v}-(\widetilde{v}_2)^{X_2}|
+|(\widetilde{v}_1)^{X_1}_x||(\widetilde{v}_2)^{X_2}_x|\right).
\end{align*}
So, using Lemmas \ref{le2.1}, \ref{le4.3}, H\"{o}lder inequality and Sobolev's imbedding theorem, we have
\begin{align*}
-&\int_{\mathbb{R}}(u-\widetilde{u})_{x}\partial_{x}F_1\dif x\\
&\le C\varepsilon_1 \left(\left\||(\widetilde{v}_1)^{X_1}_x||\widetilde{v}-(\widetilde{v}_1)^{X_1}|\right\|_{L^2}
+\left\||(\widetilde{v}_2)^{X_2}_x||\widetilde{v}-(\widetilde{v}_2)^{X_2}|\right\|_{L^2}
+\left\||(\widetilde{v}_1)^{X_1}_x||(\widetilde{v}_2)^{X_2}_x|\right\|_{L^2}\right)\\
&\le C\varepsilon_1\delta_1\delta_2(\delta_1^{\frac{1}{2}}+\delta_2^{\frac{1}{2}})
e^{-C\min\{\delta_1,\delta_2\}t}.
\end{align*}
Combining the above estimates, we derive that
\begin{align*}
R_6^1\le &C(\varepsilon_1+\delta_1+\delta_2)\int_\mathbb{R}|\partial_{x}\Phi|^2\dif x
  +C(\delta_1+\delta_2)\int_\mathbb{R}|\partial_{x}\Psi|^2\dif x
  +C(\delta_1+\delta_2)G^s\\
  &+C\varepsilon_1\delta_1\delta_2e^{-C\min\{\delta_1,\delta_2\}t}+C\sum\limits_{i=1}\limits^2\varepsilon_1 \delta_i^3e^{-C\delta_i t}.
\end{align*}
Similarly, for $k=2$, since
\begin{align*}
  &p^{\prime}(v)(v-\widetilde{v})_{xxx}-(p(v)-p(\widetilde{v}))_{xxx}\\
   &=-p^{\prime\prime\prime}(v)(v_{x}-\widetilde{v}_{x})^3
   -(p^{\prime\prime\prime}(v)-p^{\prime\prime\prime}(\widetilde{v}))\widetilde{v}_{x}^3
   -3p^{\prime\prime\prime}(v)\widetilde{v}_{x}^2(v_{\xi}-\widetilde{v}_{x})
   -3p^{\prime\prime\prime}(v)\widetilde{v}_{x}(v_{x}-\widetilde{v}_{x})^2\\
   &\quad-3p^{\prime\prime}(v)(v_{x}-\widetilde{v}_{x})(v_{xx}
   -\widetilde{v}_{xx})-3p^{\prime\prime}(v)\widetilde{v}_{x}(v_{xx}-\widetilde{v}_{xx})
  -3p^{\prime\prime}(v)\widetilde{v}_{xx}(v_{x}-\widetilde{v}_{x})\\
   &\quad-3(p^{\prime\prime}(v)-p^{\prime\prime}(\widetilde{v}))\widetilde{v}_{x}\widetilde{v}_{xx}
   -(p^{\prime}(v)-p^{\prime}(\widetilde{v}))\widetilde{v}_{xxx}.
\end{align*}
Using Lemma \ref{le2.1}, Young's inequality and Sobolev's imbedding theorem, we have
\begin{align*}
 &\int_\mathbb{R}(u-\widetilde{u})_{xx}
     (p^{\prime}(v)(v-\widetilde{v})_{xxx}-(p(v)-p(\widetilde{v}))_{xxx})\dif x\\
      &\leq C\int_\mathbb{R}|(u-\widetilde{u})_{xx}||v-\widetilde{v}||\widetilde{v}_x|\dif x
      +C\int_\mathbb{R}|(u-\widetilde{u})_{xx}||(v-\widetilde{v})_{x}|^3\dif x\\
     &\quad +C\int_\mathbb{R}|(u-\widetilde{u})_{xx}||(v-\widetilde{v})_x||\widetilde{v}_x|\dif x
      +C\int_\mathbb{R}|(u-\widetilde{u})_{xx}||(v-\widetilde{v})_{xx}||\widetilde{v}_x|\dif x\\
& \le C(\varepsilon_1^2+\delta_1+\delta_2)\left(\int_\mathbb{R}|\partial_{x}\Phi|^2\dif x
+\int_\mathbb{R}|\partial_{xx}\Psi|^2\dif x\right)
  +C(\delta_1+\delta_2)\int_\mathbb{R}|\partial_{xx}\Phi|^2\dif x\\
 &\quad +C(\delta_1+\delta_2)G^s+C\sum\limits_{i=1}\limits^2\varepsilon_1 \delta_i^3e^{-C\delta_i t}
\end{align*}
and
\begin{align*}
-&\int_{\mathbb{R}}(u-\widetilde{u})_{xx}\partial_{xx}F_1\dif x\\
&\le C\varepsilon_1 \left(\left\||(\widetilde{v}_1)^{X_1}_x||\widetilde{v}-(\widetilde{v}_1)^{X_1}_x|\right\|_{L^2}
+\left\||(\widetilde{v}_2)^{X_2}_x||\widetilde{v}-(\widetilde{v}_2)^{X_2}_x|\right\|_{L^2}
+\left\||(\widetilde{v}_1)^{X_1}_x||(\widetilde{v}_2)^{X_2}_x|\right\|_{L^2}\right)\\
&\le C\varepsilon_1\delta_1\delta_2(\delta_1^{\frac{1}{2}}+\delta_2^{\frac{1}{2}})
e^{-C\min\{\delta_1,\delta_2\}t}.
\end{align*}
So, we have
\begin{align*}
&R_6^2\le C(\varepsilon_1^2+\delta_1+\delta_2)\left(\int_\mathbb{R}|\partial_{x}\Phi|^2\dif x
+\int_\mathbb{R}|\partial_{xx}\Psi|^2\dif x\right)
  +C(\delta_1+\delta_2)\int_\mathbb{R}|\partial_{xx}\Phi|^2\dif x\\
   &\qquad+C(\delta_1+\delta_2)G^s
  +C\varepsilon_1\delta_1\delta_2e^{-C\min\{\delta_1,\delta_2\}t}+C\sum\limits_{i=1}\limits^2\varepsilon_1 \delta_i^3e^{-C\delta_i t}.
\end{align*}
Similarly, for $R_7^1$, since
\[
v(\Pi-\widetilde{\Pi})_{x}-(v\Pi-\widetilde{v}\widetilde{\Pi})_{x}=-(v-\widetilde{v})\widetilde{\Pi}_{x}
-(v-\widetilde{v})_{x}(\Pi-\widetilde{\Pi})
-(v-\widetilde{v})_{x}\widetilde{\Pi}-\widetilde{v}_{x}(\Pi-\widetilde{\Pi}),
\]
using Lemma \ref{le2.1} and $|\widetilde{\Pi}_i^{X_i}|\le C|(\widetilde{v}_i)^{X_i}_x|$, we have
\begin{align*}
 &\int_\mathbb{R}\frac{(\Pi-\widetilde{\Pi})_{x}}{\mu}
 \left(v(\Pi-\widetilde{\Pi})_{x}-(v\Pi-\widetilde{v}\widetilde{\Pi})_{x}\right)\dif x\\
 &\le C\int_\mathbb{R}\frac{|(\Pi-\widetilde{\Pi})_{x}|}{\mu}|v-\widetilde{v}||\widetilde{v}_{x}|\dif x
 + C\int_\mathbb{R}\frac{|(\Pi-\widetilde{\Pi})_{x}|}{\mu}|(v-\widetilde{v})_{x}||\Pi-\widetilde{\Pi}|\dif x\\
 &\quad+ C\int_\mathbb{R}\frac{|(\Pi-\widetilde{\Pi})_{x}|}{\mu}|(v-\widetilde{v})_{x}||\widetilde{\Pi}|\dif x
 + C\int_\mathbb{R}\frac{|(\Pi-\widetilde{\Pi})_{x}|}{\mu}|\Pi-\widetilde{\Pi}||\widetilde{v}_x|\dif x\\
 &\le C(\varepsilon_1+\delta_1+\delta_2)\left(\int_\mathbb{R}\frac{v}{\mu}(\partial_xQ)^2\dif x+G\right)
 +C(\delta_1+\delta_2)\int_\mathbb{R}(\partial_x\Phi)^2\dif x
 +C(\delta_1+\delta_2)G^s
 +C\sum\limits_{i=1}\limits^2\varepsilon_1 \delta_i^3e^{-C\delta_i t}.
\end{align*}
On the other hand, for $k=1,2$, using Lemma \ref{le2.1}, we get
\begin{align*}
\partial_x^{k}F_2&=
\partial_x^{k}\left(\left(\widetilde{v}^{X_2}_2-v_m\right)\widetilde{\Pi}_1^{X_1}
+\left(\widetilde{v}^{X_1}_1-v_m\right)\widetilde{\Pi}_2^{X_2}\right)\\
&\le
C\left(|(\widetilde{v}_1)^{X_1}_x||\widetilde{v}-(\widetilde{v}_1)^{X_1}_x|
+|(\widetilde{v}_2)^{X_2}_x||\widetilde{v}-(\widetilde{v}_2)^{X_2}_x|
+|(\widetilde{v}_1)^{X_1}_x||(\widetilde{v}_2)^{X_2}_x|\right).
\end{align*}
Then, using Lemma \ref{le4.3} and Young's inequality, we have
\begin{align*}
 &\int_\mathbb{R}\frac{(\Pi-\widetilde{\Pi})_{x}}{\mu}
 \left(\left(\widetilde{v}^{X_2}_2-v_m\right)\widetilde{\Pi}_1^{X_1}
+\left(\widetilde{v}^{X_1}_1-v_m\right)\widetilde{\Pi}_2^{X_2}\right)_x\dif x\\
&\le C\|\partial_x F_2\|_{L^2} +\frac{1}{16}\int_\mathbb{R}\frac{v}{\mu}(\partial_xQ)^2\dif x\\
&\le C\delta_1^2\delta_2^2e^{-C\min\{\delta_1,\delta_2\}t}
+\frac{1}{16}\int_\mathbb{R}\frac{v}{\mu}(\partial_xQ)^2\dif x.
\end{align*}
Combining the above estimates, we derive that
\begin{align*}
 &R_7^1 \le \frac{1}{8}\int_\mathbb{R}\frac{v}{\mu}(\partial_xQ)^2\dif x
 +C(\varepsilon_1+\delta_1+\delta_2)G
 +C(\delta_1+\delta_2)\left(\int_\mathbb{R}(\partial_x\Phi)^2\dif x+G^s\right)\\
 &+C\delta_1^2\delta_2^2e^{-C\min\{\delta_1,\delta_2\}t}
 +C\sum\limits_{i=1}\limits^2\varepsilon_1 \delta_i^3e^{-C\delta_i t}.
\end{align*}
Next, for $R_7^2$, first we have
\begin{align*}
v(\Pi-\widetilde{\Pi})_{xx}-(v\Pi-\widetilde{v}\widetilde{\Pi})_{xx}=
&-(v-\widetilde{v})_{xx}(\Pi-\widetilde{\Pi})
-(v-\widetilde{v})_{xx}\widetilde{\Pi}-\widetilde{v}_{xx}(\Pi-\widetilde{\Pi})
-(v-\widetilde{v})\widetilde{\Pi}_{xx}\\
&-2(v-\widetilde{v})_{x}(\Pi-\widetilde{\Pi})_{x}-2(v-\widetilde{v})_{x}\widetilde{\Pi}_{x}
-2\widetilde{v}_{x}(\Pi-\widetilde{\Pi})_{x}.
\end{align*}
Specially, using H\"{o}lder inequality, Sobolev's imbedding theorem and Young's inequality, we have
\begin{align*}
&\int_\mathbb{R}(v-\widetilde{v})_{xx}(\Pi-\widetilde{\Pi})(\Pi-\widetilde{\Pi})_{xx}\dif x\\
&\leq C
\|(v-\widetilde{v})_{\xi\xi}\|_{L^2}
\left(\int_\mathbb{R}\left(\frac{v}{\mu}\right)^2(\Pi-\widetilde{\Pi})^2(\Pi-\widetilde{\Pi})_{xx}^2\dif x\right)^{\frac{1}{2}}\\
&\leq C\varepsilon_1|\sqrt{\frac{v}{\mu}}(\Pi-\widetilde{\Pi})|_{L^{\infty}}
\|\sqrt{\frac{v}{\mu}}(\Pi-\widetilde{\Pi})_{xx}\|_{L^2}\\
&\leq C\varepsilon_1\left(\|\sqrt{\frac{v}{\mu}}(\Pi-\widetilde{\Pi})\|_{H^1}^2+
\|\sqrt{\frac{v}{\mu}}(\Pi-\widetilde{\Pi})_{xx}\|_{L^2}^2\right).
\end{align*}
So, we have
\begin{align*}
 &\int_\mathbb{R}\frac{(\Pi-\widetilde{\Pi})_{xx}}{\mu}
 \left(v(\Pi-\widetilde{\Pi})_{xx}-(v\Pi-\widetilde{v}\widetilde{\Pi})_{xx}\right)\dif x\\
 &\le C(\varepsilon_1+\delta_1+\delta_2)\left\|\sqrt{\frac{v}{\mu}}Q\right\|_{H^2}
 +(\delta_1+\delta_2)\left\|\partial_x\Phi\right\|_{H^1}+(\delta_1+\delta_2)G^s+C\sum\limits_{i=1}\limits^2\varepsilon_1 \delta_i^3e^{-C\delta_i t}.
\end{align*}
On the other hand, using the definition of $F_2$ and Young's inequality, we have
\begin{align*}
 &\int_\mathbb{R}\frac{(\Pi-\widetilde{\Pi})_{xx}}{\mu}
 \left(\left(\widetilde{v}^{X_2}_2-v_m\right)\widetilde{S}_1^{X_1}
+\left(\widetilde{v}^{X_1}_1-v_m\right)\widetilde{\Pi}_2^{X_2}\right)_{xx}\dif x\\
&\le C\|\partial_{xx}F_2\|_{L^2}
+\frac{1}{16}\int_\mathbb{R}\frac{v}{\mu}(\partial_{xx}Q)^2\dif x\\
&\le C\delta_1^2\delta_2^2e^{-C\min\{\delta_1,\delta_2\}t}
+\frac{1}{16}\int_\mathbb{R}\frac{v}{\mu}(\partial_{xx}Q)^2\dif x.
\end{align*}
Therefore, we get
\begin{align*}
R_7^2&\le \frac{1}{8}\int_\mathbb{R}\frac{v}{\mu}(\partial_{xx}Q)^2\dif x
 +C(\varepsilon_1+\delta_1+\delta_2)\left\|\sqrt{\frac{v}{\mu}}Q\right\|_{H^1}
  +C(\delta_1+\delta_2)\left(G^s+\left\|\partial_x\Phi\right\|_{H^1}\right)\\
  &+C\sum\limits_{i=1}\limits^2\varepsilon_1 \delta_i^3e^{-C\delta_i t}
 +C\delta_1^2\delta_2^2e^{-C\min\{\delta_1,\delta_2\}t}.
\end{align*}
Finally, integrating the equality \eqref{4.11} over $[0,t]$, using the above estimates, we complete the proof of this lemma.
\end{proof}

\subsection{Dissipative estimates}
In the following lemmas, we give the dissipative estimates of given solutions to system \eqref{4.1}.
\begin{lemma}\label{Le4.8}
Under the hypotheses of Proposition \ref{p1}, there exist $C,\nu>0$ (independent of $\tau$ and $T$) such that for all $t\in(0,T]$, we have
\begin{equation}\label{26.3}
\begin{aligned}
&(1-\kappa-C(\delta_0+\varepsilon_1))\int_0^t\|\sqrt{|p^\prime(v)|}\partial_{x}\Phi\|_{H^1}^2\dif t\\
&\leq\nu\|\Psi\|_{H^1}^2
+C(\nu)\|\partial_{x}\Phi\|_{H^1}^2
     +C\int_0^t\sum\limits_{i=1}\limits^2\delta_i^2|\dot{X}_i(t)|^2\dif t+C\int_0^t\|\partial_{x}Q\|_{H^1}^2\dif t\\
     &+C\left(\|\Psi_0\|_{H^1}^2+\|\partial_{x}\Phi_0\|_{H^1}^2\right)
     +(1+C\delta_0)\int_0^t\|\partial_{x}\Psi\|_{H^1}^2\dif t
     +C\delta_0\int_0^tG^s(U)\dif t+C\delta_0,
\end{aligned}
\end{equation}
where $\kappa$ is small constant to be determined later and $G^s(U)$ is defined in \eqref{3.7}.
\end{lemma}
\begin{proof}
Multiplying the equation $\eqref{4.10}_2$ by $\partial_{x}^{k+1}\Phi$ for $k=0 , 1,$ and integrating over $(0,t)\times \mathbb{R}$, we get
\[
\int_0^t\int_{\mathbb{R}}-p^{\prime}(v)\left(\partial_{x}^{k+1}\Phi\right)^2\dif x \dif t=:\sum\limits_{i=0}\limits^{4}M_i^k,
\]
where
\[
M_1^k=\int_0^t\int_\mathbb{R}\partial_{t}\partial_{x}^k\Psi\partial_{x}^{k+1}\Phi \dif x \dif t,
\quad M_2^k=-\sum\limits_{i=1}\limits^2\int_0^t\int_\mathbb{R}\dot{X_i}\partial_{x}^{k+1}\widetilde{u}_i^{X_i}
\partial_{x}^{k+1}\Phi \dif x \dif t,
\]
\[
M_3^k=-\int_0^t\int_\mathbb{R}\partial_{x}^{k+1}Q\partial_{x}^{k+1}\Phi \dif x \dif t,
\quad M_4^k=-\int_0^t\int_\mathbb{R}F_3^k\partial_{x}^{k+1}\Phi \dif x \dif t.
\]
Firstly, doing integration by part and using equation $\eqref{4.10}_1$, we get
\begin{align*}
&M_1^k=\int_\mathbb{R}\left(\partial_{x}^k\Psi(t)\partial_{x}^{k+1}\Phi(t) -\partial_{x}^k\Psi_0\partial_{x}^{k+1}\Phi_0 \right)\dif x
-\int_0^t\int_\mathbb{R}\partial_{x}^k\Psi
      \left(\sum\limits_{i=1}\limits^2\dot{X}_i\partial_{x}^{k+2}\widetilde{v}_i^{X_i}
      +\partial_{x}^{k+2}\Psi
      \right)\dif x \dif t.
\end{align*}
Since
\[
	\int_\mathbb{R}\partial_{x}^k\Psi(t)\partial_{x}^{k+1}\Phi(t) \dif x\leq \nu\int_\mathbb{R}(\partial_{x}^k\Psi(t))^2 \dif x +C(\nu)\int_\mathbb{R}(\partial_{x}^{k+1}\Phi(t))^2 \dif x,
	\]
		\[
	\int_\mathbb{R}\partial_{x}^k\Psi_0\partial_{x}^{k+1}\Phi_0 \dif x\leq C\int_\mathbb{R}(\partial_{x}^k\Psi_0)^2 \dif x +C\int_\mathbb{R}(\partial_{x}^{k+1}\Phi_0)^2 \dif x,
	\]
\begin{align*}
-\int_0^t\int_\mathbb{R}\partial_{x}^k\Psi
      \left(\sum\limits_{i=1}\limits^2\dot{X}_i\partial_{x}^{k+2}\widetilde{v}_i^{X_i}\right)\dif x \dif t
&=\int_0^t\int_\mathbb{R}\partial_{x}^{k+1}\Psi
      \left(\sum\limits_{i=1}\limits^2\dot{X}_i\partial_{x}^{k+1}\widetilde{v}_i^{X_i}\right)\dif x \dif t\\
&\le C\sum\limits_{i=1}\limits^2\left(\delta_i^2\int_0^t|\dot{X}_i|^2\dif t
+\delta_i\int_0^t\int_\mathbb{R}(\partial_{x}^{k+1}\Psi)^2 \dif x\dif t\right)
\end{align*}
and
\[
-\int_0^t\int_\mathbb{R}\partial_{x}^k\Psi\partial_{x}^{k+2}\Psi\dif x \dif t
=\int_0^t\int_\mathbb{R}\left(\partial_{x}^{k+1}\Psi\right)^2\dif x \dif t,
\]
we obtain
\begin{align*}
M_1^k\leq&\nu\int_\mathbb{R}\left(\partial_{x}^k\Psi(t)\right)^2\dif x
+C(\nu)\int_\mathbb{R}\left(\partial_{x}^{k+1}\Phi(t)\right)^2\dif x
+(1+C\delta_0)\int_0^t\int_\mathbb{R}\left(\partial_{x}^{k+1}\Psi\right)^2\dif x \dif t\\
&+C\left(\sum\limits_{i=1}\limits^2\delta_i^2\int_0^t|\dot{X}_i|^2\dif t
+\int_\mathbb{R}\left(\partial_{x}^k\Psi_0\right)^2\dif x
+\int_\mathbb{R}\left(\partial_{x}^{k+1}\Phi_0\right)^2\dif x\right).
\end{align*}
Secondly, for $M_2^k$ and $M_3^k$, using Young's inequality and Lemma \ref{le2.1}, we have
\begin{align*}
M_2^k\le  C\sum\limits_{i=1}\limits^2\left(\delta_i^2\int_0^t|\dot{X}_i|^2\dif t
+\delta_i\int_0^t\int_\mathbb{R}|p^\prime(v)|(\partial_{x}^{k+1}\Phi)^2 \dif x\dif t\right)
\end{align*}
and
\[
M_3^k\le
 \kappa\int_0^t\int_{\mathbb{R}}-p^{\prime}(v)\left(\partial_{x}^{k+1}\Phi\right)^2\dif x \dif t
   +C\int_0^t\int_\mathbb{R}\left(\partial_{x}^{k+1}Q\right)^2\dif x \dif t.
\]
Next, we estimate $M_4^k$. For $k=0$,  we have
\[
F_3^0=-(p^{\prime}(v)-p^{\prime}(\widetilde{v}))\widetilde{v}_{x}
-\left(p(\widetilde{v})_x-p(\widetilde{v}_1)^{X_1}_x-p(\widetilde{v}_2)^{X_2}_x\right),
\]
which gives
\begin{align*}
M_4^0\le C(\delta_1+\delta_2)\left(\int_0^t\int_\mathbb{R}|\partial_{x}\Phi|^2\dif x\dif t
  +\int_0^tG^s\dif t\right)
  +C\varepsilon_1\max\{\delta_1,\delta_2\}.
\end{align*}
For $k=1$, the term $F_3^1$ is the same as in Lemma \ref{Le4.7}. Therefore, using similar estimates as in Lemma \ref{Le4.7}, we have
\begin{align*}
M_4^1&\le C(\varepsilon_1+\delta_1+\delta_2)
\int_0^t\|\partial_{x}\Phi\|_{H^1}^2\dif t
 +C(\delta_1+\delta_2)\int_0^tG^s\dif t
  +C\varepsilon_1\max\{\delta_1,\delta_2\}.
\end{align*}
Therefore, combining the above estimates, we get the desired results.
\end{proof}

\begin{lemma}\label{Le4.9}
Under the hypotheses of Proposition \ref{p1}, there exist $C,\nu_1>0$ (independent of $\tau$ and $T$) such that for all $t\in(0,T]$, we have
\begin{equation}\label{26.4}
\begin{aligned}
&\frac{\mu}{2}(1-C\delta_0)\int_0^t\|\partial_{x}\Psi\|_{H^1}^2\dif t
\leq\nu_1\tau\|Q\|_{H^1}^2
+C(\nu_1)\|\partial_{x}\Psi\|_{H^1}^2
     +C\int_0^t\sum\limits_{i=1}\limits^2\delta_i^2|\dot{X}_i(t)|^2\dif t\\
     &\quad+\mu(\kappa+C\delta_0)\int_0^t\|\sqrt{|p^\prime(v)|}\partial_{x}\Phi\|_{H^1}^2\dif t
     +C\left(\tau\|Q_0\|_{H^1}^2+\|\partial_{x}\Psi_0\|_{H^1}^2\right)\\
&\quad+C\int_0^t\|\partial_{x}Q\|_{H^1}^2\dif t+C(\varepsilon_1+\delta_0)\int_0^t G^s(U)\dif t
+\frac{v_m+C(\delta_0+\varepsilon_1)}{2}\int_0^t G(U)\dif t+C\delta_0,
\end{aligned}
\end{equation}
where $\kappa$ is small constant to be determined later, $G^s(U), G(U)$ are defined in \eqref{3.7} and \eqref{26.1b}, respectively.
\end{lemma}
\begin{proof}
Multiplying the equation $\eqref{4.10}_3$ by $\partial_{x}^{k+1}\Psi$ for $k=0 , 1$, and integrating over $(0,t)\times\mathbb{R}$, we get
\[
\int_0^t\int_{\mathbb{R}}\mu\left(\partial_{x}^{k+1}\Psi\right)^2\dif x \dif t
=:\sum\limits_{i=0}\limits^{4}N_i^k,
\]
where
\[
N_1^k=\int_0^t\int_\mathbb{R}\tau\partial_{t}\partial_{x}^kQ\partial_{x}^{k+1}\Psi \dif x \dif t,
\quad
N_2^k=-\tau\sum\limits_{i=1}\limits^2\dot{X}_i\partial_x^{k+1}\widetilde{\Pi}_i^{X_i}\partial_{x}^{k+1}\Psi\dif x\dif t,
\]
\[
N_3^k=\int_0^t\int_\mathbb{R}v\partial_{x}^{k}Q\partial_{x}^{k+1}\Psi \dif x \dif t,
\quad N_4^k=-\int_0^t\int_\mathbb{R}F_4^k\partial_{x}^{k+1}\Psi \dif x \dif t.
\]
Firstly, doing integration by part and using equation $\eqref{4.10}_2$, we get
\begin{align*}
N_1^k=&\tau\int_\mathbb{R}\partial_{x}^kQ(t)\partial_{x}^{k+1}\Psi(t) \dif x-\tau\int_\mathbb{R}\partial_{x}^kQ_0\partial_{x}^{k+1}\Psi_0 \dif x\\
      &-\tau\int_0^t\int_\mathbb{R}\partial_{x}^kQ
      \left(\sum\limits_{i=1}\limits^2\dot{X}_i\partial_{x}^{k+2}\widetilde{u}_i^{X_i}
      -\partial_{x}^{k+2}(p(v)-p(\widetilde{v}))+\partial_{x}^{k+2}Q+\partial_{x}^{k+2}F_1\right)\dif x \dif t.
\end{align*}
Integrating by part and using lemma \ref{le2.1} and Young's inequality, we get
\[
	\tau\int_\mathbb{R}\partial_{x}^kQ(t)\partial_{x}^{k+1}\Psi(t) \dif x\leq 	\nu_1\tau\int_\mathbb{R}(\partial_{x}^kQ(t))^2 \dif x
	+C(\nu_1)\tau\int_\mathbb{R}(\partial_{x}^{k+1}\Psi(t))^2 \dif x,
	\]
	\[
	\tau\int_\mathbb{R}\int_\mathbb{R}\partial_{x}^kQ_0\partial_{x}^{k+1}\Psi_0 \dif x\leq C\tau\int_\mathbb{R}(\partial_{x}^kQ_0)^2 \dif x
	+C\tau\int_\mathbb{R}(\partial_{x}^{k+1}\Psi_0)^2 \dif x,
	\]
\begin{align*}
-\tau\int_0^t\int_\mathbb{R}\partial_{x}^kQ
     \sum\limits_{i=1}\limits^2\dot{X}_i\partial_{x}^{k+2}\widetilde{u}_i^{X_i}\dif x \dif t
&=\tau\int_0^t\int_\mathbb{R}\partial_{x}^{k+1}Q
     \sum\limits_{i=1}\limits^2\dot{X}_i\partial_{x}^{k+1}\widetilde{u}_i^{X_i}\dif x \dif t\\
&\le C\tau\sum\limits_{i=1}\limits^2\left(\delta_i^2\int_0^t|\dot{X}_i|^2\dif t
+\delta_i\int_0^t\int_\mathbb{R}(\partial_{x}^{k+1}Q)^2 \dif x\dif t\right),
\end{align*}
\begin{align*}
-\tau\int_0^t\int_\mathbb{R}\partial_{x}^kQ\partial_{x}^{k+2}Q\dif x \dif t
=\tau\int_0^t\int_\mathbb{R}\left(\partial_{x}^{k+1}Q\right)^2\dif x \dif t,
\end{align*}
and
\begin{align*}
&\tau\int_0^t\int_\mathbb{R}\partial_{x}^kQ\partial_{x}^{k+2}(p(v)-p(\widetilde{v}))\dif x \dif t\\
&=-\tau\int_0^t\int_\mathbb{R}\partial_{x}^{k+1}Q\partial_{x}^{k+1}(p(v)-p(\widetilde{v}))\dif x \dif t\\
&\le \mu\kappa\int_0^t\int_\mathbb{R}\left(\partial_{x}^{k+1}\Phi\right)^2\dif x \dif t
+C\tau^2\int_0^t\int_\mathbb{R}\left(\partial_{x}^{k+1}Q\right)^2\dif x \dif t
+C(\delta_1+\delta_2)\int_0^tG^s\dif t
+C\max\{\delta_1,\delta_2\}.
\end{align*}
Recalling the estimates of $\partial_x^{k+1} F_1, k=0, 1$ in Lemma \ref{Le4.7} and \eqref{4.9}, we get
\begin{align*}
-\tau\int_0^t\int_\mathbb{R}\partial_{x}^kQ
      \partial_{x}^{k+2}F_1 \dif x \dif t&=\tau\int_0^t\int_\mathbb{R}\partial_{x}^{k+1}Q
      \partial_{x}^{k+1}F_1 \dif x \dif t\\
     & \leq C\int_0^t\int_\mathbb{R}\left(\partial_{x}^{k+1}Q\right)^2
       \dif x \dif t+C\tau^2\int_0^t\|\partial_{x}^{k+1}F_1\|_{L^2}^2\dif t\\
      &\leq C\int_0^t\int_\mathbb{R}\left(\partial_{x}^{k+1}Q\right)^2
       \dif x \dif t+C\max\{\delta_1,\delta_2\}.
\end{align*}
Therefore, we conclude that
\begin{align*}
N_1^k&\le\nu_1\tau\int_\mathbb{R}\left(\partial_{x}^kQ(t)\right)^2\dif x
+C(\nu_1)\int_\mathbb{R}\left(\partial_{x}^{k+1}\Psi(t)\right)^2\dif x
+C\int_0^t\int_\mathbb{R}\left(\partial_{x}^{k+1}Q\right)^2\dif x \dif t
+C\tau\sum\limits_{i=1}\limits^2\delta_i^2\int_0^t|\dot{X}_i|^2\dif t\\
&+C\tau\int_\mathbb{R}\left(\partial_{x}^kQ_0\right)^2\dif x
+C\int_\mathbb{R}\left(\partial_{x}^{k+1}\Psi_0\right)^2\dif x
+C\delta_0\int_0^tG^s\dif t+\mu\kappa\int_0^t\int_\mathbb{R}\left(\partial_{x}^{k+1}\Phi\right)^2\dif x \dif t
+C\max\{\delta_1,\delta_2\}.
\end{align*}
Secondly, for $N_2^k$, we have
\begin{align*}
N_2^k\leq C\tau^2\sum\limits_{i=1}\limits^2\delta_i^2\int_0^t|\dot{X}_i|^2\dif t
+C(\delta_1+\delta_2)\int_0^t\int_\mathbb{R}\left(\partial_{x}^{k+1}\Psi\right)^2\dif x \dif t.
\end{align*}
For $N_3^k$, using Young's inequality and noting that $|v_m-v_-|\le C\delta_1, |v_m-v_+|\le C\delta_2$, we have
\[
N_3^k\leq \frac{\mu}{2}\int_0^t\int_{\mathbb{R}}\left(\partial_{x}^{k+1}\Psi\right)^2\dif x \dif t
   +\frac{v_m+C(\delta_0+\varepsilon_1)}{2}\int_0^t\int_\mathbb{R}\frac{v}{\mu}\left(\partial_{x}^{k}Q\right)^2\dif x \dif t.
\]
$N_4^k$ can be estimated in the same way as in Lemma \ref{Le4.7}. Specially, for $k=0$, we have
\[
F_4^0=-(v-\widetilde{v})\widetilde{\Pi}-\left(\widetilde{v}^{X_2}_2-v_m\right)\widetilde{\Pi}_1^{X_1}
-\left(\widetilde{v}^{X_1}_1-v_m\right)\widetilde{\Pi}_2^{X_2}.
\]
So, we have
\[
N_4^0\le C(\delta_1+\delta_2)\int_0^t\int_\mathbb{R}\left(\partial_{x}\Psi\right)^2\dif x \dif t+C(\delta_1+\delta_2)\int_0^tG^s\dif t
+C\max\{\delta_1,\delta_2\}.
\]
For $k=1$, the term $F_3^1$ is the same as in Lemma \ref{Le4.7}. Thus, using similar estimates as in Lemma \ref{Le4.7}, we have
\begin{align*}
N_4^1\le& C(\varepsilon_1+\delta_1+\delta_2)
\left(\int_0^t\int_\mathbb{R}\left(\partial_{xx}\Psi\right)^2\dif x \dif t+\int_0^tG(U)\dif t\right)\\
&+C(\delta_1+\delta_2)\int_0^t\int_\mathbb{R}\left(\partial_{x}\Phi\right)^2\dif x \dif t
+C\max\{\delta_1,\delta_2\}.
\end{align*}
Therefore, combining the above estimates, we get the desired results.
\end{proof}

Next, by using Lemma \ref{le4.2}, Lemma \ref{Le4.7}, Lemma \ref{Le4.8} and Lemma \ref{Le4.9}, we are able to prove Proposition \ref{p1}.

{\bf{Proof of Proposition \ref{p1}}}: For arbitrary constants $C_2, C_3, C_4>0$, we multiply equations \eqref{26.1}, \eqref{26.3} and \eqref{26.4} by $C_2$, $\frac{3\mu}{4v_m}C_3$ and $\frac{2}{v_m}C_4$, respectively. Combining the resulting equations with Lemma \ref{Le4.7} yields
\begin{equation}\label{guan1-1}
\begin{aligned}
&\frac{1}{2}\|\Psi\|^2_{L^2}
+\int_{\mathbb R}H(v|\widetilde v)\dif x
 +\frac{\tau}{2\mu}\|Q\|^2_{L^2}+\|\partial_{x}\Phi\|^2_{H^1}+\|\partial_{x}\Psi\|^2_{H^1}+\tau\|\partial_{x}Q\|^2_{H^1}\\
&+\sum\limits_{i=1}\limits^2(\frac{\delta_i}{4M}-C\delta_i^2)\int_0^t|\dot{X}_i|^2\dif t+C_1(1-C(\delta_0+\varepsilon_1))\int_0^t G^s(U)
\dif t
+C_5\int_0^t
G(U)\dif t\\
&+C_6\int_0^t\|\sqrt{|p^\prime(v)|}\partial_{x}\Phi\|_{H^1}^2\dif t
+C_7\int_0^t\|\partial_{x}\Psi\|_{H^1}^2\dif t
+\int_0^t\|\partial_{x}Q\|_{H^1}^2\dif t\\
 &\leq C\left(\|\Phi_0(\cdot)\|^2_{H^2}+\|\Psi_0(\cdot)\|^2_{H^2}
 +\tau\|Q_0(\cdot)\|^2_{H^2}\right)
 +\nu\frac{3\mu}{4v_m}C_3\|\Psi\|_{L^2}^2+\nu_1\frac{2C_4}{v_m}\tau\|Q\|_{L^2}^2,
\end{aligned}
\end{equation}
	where \begin{align*}
	&	C_5=C_2(1-\kappa-C(\delta_1^{\frac{1}{2}}+\delta_2^{\frac{1}{2}}))-C_4(1+C(\delta_0+\varepsilon_1)),\\
	&C_6=\frac{3\mu}{4v_m}\left(C_3(1-\kappa-C(\delta_0+\varepsilon_1))-C_2\frac{1+\kappa+C\varepsilon_1}{1-C\varepsilon_1}\right),\\
	&C_7=\frac{\mu}{v_m}\left(C_4(1-C\delta_0)-\frac{3}{4}C_3(1+C\delta_0)\right).
\end{align*}  
We now choosing $\delta_1, \delta_2, \delta_0, \varepsilon_1, \kappa, \nu_1, \nu_2$ sufficiently small such that 
\[\frac{1}{4M}>C\delta_i,\quad 1>C(\varepsilon_1+\delta_0),\quad  \nu_1\frac{3\mu C_3}{4v_m}<\frac{1}{2},\quad \nu_2\frac{2C_4}{v_m}<\frac{1}{2\mu} \quad \text{where}\quad i=1,2.\] 
Furthermore, to assure $C_5, C_6, C_7>0$, it is sufficient to require $C_3>C_2>C_4>\frac{3}{4}C_3$. Therefore, combining the above results and using Lemma \ref{le4.1}, the proof of the Proposition \ref{p1} is completed.

\section{Proof of Theorem \ref{th1.2}}
In this section, we show the Theorem \ref{th1.2} by use of the uniform estimates obtained in Section 4 and usual compactness arguments.
Firstly,  according to Theorem \ref{th1}, we get
\begin{align*}
\sup_{0\le t<+\infty}\|(\Phi^{\tau}, \Psi^{\tau}, \sqrt{\tau}Q^{\tau}) (t,\cdot)\|_{H^2}^2+\int_0^{+\infty}\left(\|(\Phi^{\tau}_{x}, \Psi^{\tau}_{x})\|_{H^1}^2+\|Q^{\tau}\|_{H^2}^2\right)\dif t
\le C_0E(0)+C_0\delta_0,
\end{align*}
where $
E(0)=\|(\Phi^\tau,\Psi^\tau,\sqrt{\tau}Q^\tau)(0, \cdot)\|_{H^2}$, $\Phi^{\tau}=v^{\tau}-\widetilde v^{\tau}, \Psi^{\tau}=u^{\tau}-\widetilde u^{\tau}, Q^{\tau}=\Pi^{\tau}-\widetilde \Pi^{\tau}$, $C_0$ is a constant independent of $\tau$ and $\widetilde v^{\tau}=(\widetilde v_1)^{\tau}+(\widetilde v_2)^{\tau}-v_m, \widetilde u^{\tau}=(\widetilde u_1)^{\tau}+(\widetilde u_1)^{\tau}-u_m, \widetilde \Pi^{\tau}=(\widetilde \Pi_1)^{\tau}+(\widetilde \Pi_2)^{\tau}$ are the compose waves of system \eqref{1.3}, respectively. Thus, there exist $(\Phi^0, \Psi^0)\in L^{\infty}((0,\infty);H^2)$ and $Q^0\in L^2((0, \infty);H^2)$ such that
\begin{equation}\label{5.1}
\begin{aligned}
(\Phi^{\tau}, \Psi^{\tau})\rightharpoonup(\Phi^0, \Psi^0)\qquad weak-* \quad in \quad L^{\infty}((0,\infty);H^2),\\
Q^{\tau}\rightharpoonup Q^0 \qquad weakly- \quad in \quad  L^2((0, \infty);H^2).
\end{aligned}
\end{equation}
Secondly, from Lemma \ref{le2.1}, we get
\[
\begin{aligned}
&\|(\widetilde v_1)^{\tau}-v_m\|_{L^2(\mathbb{R}^+)}+\|(\widetilde v_1)^{\tau}-v_-\|_{L^2(\mathbb{R}^-)}
+\|(\widetilde v_1)^{\tau}_{x}\|_{H^2}\le C,\\
&\|(\widetilde v_2)^{\tau}-v_+\|_{L^2(\mathbb{R}^+)}+\|(\widetilde v_2)^{\tau}-v_m\|_{L^2(\mathbb{R}^-)}
+\|(\widetilde v_2)^{\tau}_{x}\|_{H^2}\le C,\\
&\|(\widetilde u_1)^{\tau}-u_m\|_{L^2(\mathbb{R}^+)}+\|(\widetilde u_1)^{\tau}-u_-\|_{L^2(\mathbb{R}^-)}
+\|(\widetilde u_1)^{\tau}_{x}\|_{H^2}\le C,\\
&\|(\widetilde u_2)^{\tau}-u_+\|_{L^2(\mathbb{R}^+)}+\|(\widetilde u_2)^{\tau}-u_m\|_{L^2(\mathbb{R}^-)}
+\|(\widetilde u_2)^{\tau}_{x}\|_{H^2}\le C,\\
&\|(\widetilde \Pi_1)^{\tau}\|_{H^2}\le C,\qquad \|(\widetilde \Pi_2)^{\tau}\|_{H^2}\le C,
\end{aligned}
\]
where $C$ independent of $\tau$.\\
Then, using compactness theorem, for $i=1,2$ and any $T>0$, we have
\[
\begin{aligned}
(\widetilde v_i)^{\tau}\rightarrow(\widetilde v_i)^0,\quad(\widetilde u_i)^{\tau}\rightarrow(\widetilde u_i)^0,\qquad strongly\quad in \quad C([0,T]; H_{loc}^2),\\
(\widetilde \Pi_i)^{\tau}\rightharpoonup(\widetilde \Pi_i)^0,\qquad weakly-\quad in \quad L^\infty((0,\infty), H^2).
\end{aligned}
\]
In addition, let $\tau\rightarrow0$ in \eqref{2.2}, we have $\tau \widetilde \Pi_i\rightarrow 0$ in $D^\prime((0,\infty)\times \mathbb R)$ and
\[
(\widetilde \Pi_i)^{\tau}\rightharpoonup\mu\frac{(\widetilde u_i)_{x}^0}{(\widetilde v_i)^0}:=(\widetilde \Pi_i)^{0},\qquad  in \quad \mathcal D^\prime((0,\infty)\times \mathbb R)
\]
and we know that $(\widetilde v_i)^0,(\widetilde u_i)^0$ are the $i$-traveling wave solutions of classical Navier-Stokes equations.
Therefore,  for any $T>0$, we have
\begin{equation}\label{5.2}
\begin{aligned}
\widetilde v^{\tau}=(\widetilde v_1)^{\tau}+(\widetilde v_2)^{\tau}-v_m\rightarrow(\widetilde v_1)^{0}+(\widetilde v_2)^{0}-v_m=:\widetilde v^{0}, \quad strongly\quad in \quad C([0, T]; H_{loc}^2),\\
 \widetilde u^{\tau}=(\widetilde u_1)^{\tau}+(\widetilde u_2)^{\tau}-u_m\rightarrow(\widetilde u_1)^{0}+(\widetilde u_2)^{0}-u_m=:\widetilde u^{0}, \quad strongly\quad in \quad C([0, T]; H_{loc}^2),\\
  \widetilde \Pi^{\tau}=(\widetilde \Pi_1)^{\tau}+(\widetilde \Pi_2)^{\tau}\rightharpoonup(\widetilde \Pi_1)^{0}+(\widetilde \Pi_2)^{0}=:\widetilde \Pi^{0},\qquad weakly- \quad in \quad L^\infty((0, \infty); H_{loc}^2).
\end{aligned}
\end{equation}
Finally, for any $T>0$, using \eqref{4.1}, we know that $\Phi_t^{\tau}$ and $\Psi_t^{\tau}$ are bounded in $L^2((0,T);H^1)$. Furthermore, using compactness theorem, for any $\alpha>0$, $(\Phi^{\tau},\Psi^{\tau})$ are relatively compact in $C([0,T];H_{loc}^{2-\alpha})$. Then, as $\tau\rightarrow0$, we have
\[
(\Phi^{\tau},\Psi^{\tau})\rightarrow(\Phi^0, \Psi^0)\qquad strongly\quad in \quad C([0,T];H_{loc}^{2-\alpha}).
\]
Therefore, combining \eqref{5.2}, we have
\begin{align}\label{5.3}
(v^{\tau},u^{\tau})\rightarrow(\Phi^0+\widetilde v^{0}, \Psi^0+\widetilde u^{0})
=:(v^0, u^0),
\qquad strongly\quad in \quad C([0,T];H_{loc}^{2-\alpha}).
\end{align}
On the other hand, noting that $\sqrt{\tau}\Pi^{\tau}$ is uniform bounded in $L^{\infty}((0,\infty);H^2)$, which yields $\tau \Pi_t ^{\tau}\rightarrow0$ in $D^{\prime}((0,\infty)\times \mathbb R)$ as $\tau\rightarrow0$.  Let $\tau\rightarrow0$ in \eqref{1.3},  we have
\begin{align}\label{5.4}
\Pi^{\tau}\rightharpoonup\mu\frac{(u^0)_x}{v^0}:=\Pi^0\qquad a.e. \quad (0,\infty)\times\mathbb{R}
\end{align}
and we conclude that $v^0,u^0$ are the solutions of classical Navier-Stokes equations.
Then, combining \eqref{5.1}, \eqref{5.2}, \eqref{5.3} and \eqref{5.4}, we get the desired results.

\appendix
\section{proof of Lemma \ref{le3.10}}

Firstly, we define $F_i(t, X_i)$ ($i=1, 2$) as follows:
\[
F_i(t, X_i)=-\frac{M}{\delta_i}
\left[\int_{\mathbb{R}}\frac{a}{\sigma_i}(\widetilde{u}_i^{X_i})_x(p(v)-p(\widetilde{v}))\dif x
-\int_{\mathbb{R}}a\left(p(\widetilde{v}_i^{X_i})\right)_x(v-\widetilde{v})\dif x\right].
\]
Applying \eqref{3.11-1}, Lemma \ref{le2.1} and the definition of $a(t, x)$ in \eqref{2.11}, we can obtain that
\[
\|a\|_{C^1}\le 2,\quad \|\widetilde{v}_i^{X_i}\|_{C^2}<\infty,\quad  \|(\widetilde{v}_i^{X_i})_x\|_{W^{1,1}}\le C\delta_i,\quad \|(\widetilde{u}_i^{X_i})_x\|_{W^{1,1}}\le C\delta_i.
\]
Then, we have
\begin{equation}\label{3.12-1}
\begin{aligned}
\sup\limits_{X_i\in\mathbb{R}}|F_i(t, X_i)|&\le \frac{C}{\delta_i}\|a\|_{C^1}\||p(\widetilde{v}_i)|,|p(v)|,|\widetilde{v}_i|,|v|\|_{L^\infty}
\left(\|(\widetilde{v}_i^{X_i})_x\|_{L^1}+\|(\widetilde{u}_i^{X_i})_x\|_{L^1}\right)\\
&\le C,
\end{aligned}
\end{equation}
and
\[
\begin{aligned}
\sup\limits_{X_i\in\mathbb{R}}|\partial_{X_i}F_i(t, X_i)|&\le \frac{C}{\delta_i}\|a\|_{C^1}\||p(\widetilde{v}_i)|,|p(v)|,|\widetilde{v}_i|,|v|\|_{L^\infty}
\left(\|(\widetilde{v}_i^{X_i})_{xx}\|_{L^1}+\|(\widetilde{u}_i^{X_i})_{xx}\|_{L^1}\right)\\
&\le C,
\end{aligned}
\]
where $C$ is a constant independent of $t$ and $\tau$. Therefore, the ODE \eqref{2.10} has a unique Lipschitz continuous solution by using the Cauchy-Lipschitz theorem (Lemma C.1 of \cite{KV9}).

In particularly, since $|\dot X_1(t)|+|\dot X_2(t)|\le C $ according \eqref{3.12-1}, we have \eqref{3.10-2}.

\section{proof of Lemma \ref{le4.3}}

Although the proof of Lemma \ref{le4.3} can be directly derived from the Appendix A of \cite{SMJ}, there are still slight differences between the traveling wave solutions of the relaxed system and classical system. Therefore, we still provide the proof of the Lemma \ref{le4.3} in this appendix.
We only estimates the case of $i=2$, other case $i=1$ can be followed in a similar way.
Firstly, according Lemma \ref{le2.1}, we can obtain that
\[
|(\widetilde{v}_2)^{X_2}_x|\le C\delta_2^2e^{-C\delta_2|x-\sigma_2 t-X_2(t)|}\quad \forall x\in\mathbb{R},\quad t>0,
\]
and
\[
|\widetilde v-(\widetilde{v}_2)^{X_2}|=|(\widetilde{v}_1)^{X_1}-v_m|\le
\begin{cases}
C\delta_1e^{-C\delta_1|x-\sigma_1 t-X_1(t)|}\quad \text{if} \quad x\ge\sigma_1t+X_1(t),\\
C\delta_1\quad\quad \quad\qquad \qquad  \qquad  \text{if} \quad x\le\sigma_1t+X_1(t).
\end{cases}
\]
Noting that $X_1(t)+\sigma_1t\le\frac{\sigma_1t}{2}\le0\le\frac{\sigma_2t}{2}\le X_2(t)+\sigma_2t$, we can derive that
\[
|(\widetilde{v}_2)^{X_2}_x||(\widetilde{v}_1)^{X_1}-v_m|\le
\begin{cases}
C\delta_1\delta_2^2e^{-C\delta_1|x-\sigma_1 t-X_1(t)|}\quad \text{if} \quad x\ge0,\\
C\delta_1\delta_2^2e^{-C\delta_2|x-\sigma_2 t-X_2(t)|} \quad \text{if}\quad x\le0.
\end{cases}
\]
On the other hand, we note that
\begin{align*}
&x-\sigma_1 t-X_1(t)\ge -\frac{\sigma_1t}{2}\ge0\quad \text{if} \quad x\ge0,\\
&x-\sigma_2 t-X_2(t)\le -\frac{\sigma_2t}{2}\le0\quad \text{if} \quad x\ge0.
\end{align*}
Therefore, we conclude that
\[
|(\widetilde{v}_2)^{X_2}_x||(\widetilde{v}_1)^{X_1}-v_m|\le
C\delta_1\delta_2^2e^{-C\min\{\delta_1, \delta_2\}t},\quad \forall x\in\mathbb{R},\quad t>0,
\]
and
\[
\int_{\mathbb{R}}|(\widetilde{v}_2)^{X_2}_x||(\widetilde{v}_1)^{X_1}-v_m|\dif x\le
C\delta_1\delta_2^2e^{-C\min\{\delta_1, \delta_2\}t},\quad t>0.
\]
In a similar way, using Lemma \ref{le2.1}, we can get
\[
|(\widetilde{v}_1)^{X_1}_x||(\widetilde{v}_2)^{X_2}_x|\le
C\delta_1^2\delta_2^2e^{-C\min\{\delta_1, \delta_2\}t},\quad \forall x\in\mathbb{R},\quad t>0,
\]
and
\[
\int_{\mathbb{R}}|(\widetilde{v}_1)^{X_1}_x||(\widetilde{v}_2)^{X_2}_x|\dif x\le
C\delta_1\delta_2e^{-C\min\{\delta_1, \delta_2\}t},\quad t>0.
\]

\textbf{Acknowledgement}: Yuxi Hu's Research is supported by the Fundamental Research Funds for the Central Universities (No. 2023ZKPYLX01) .

\textbf{Data Availibility}:  Data sharing not applicable to this article as no datasets were generated or analysed during the current study.

\textbf{Conflict of interest}: On behalf of all authors, the corresponding author states that there is no conflict of interest.

\end{document}